\documentclass[reqno]{amsart}
\usepackage[T1]{fontenc}
\usepackage{cite,lmodern}
\usepackage{microtype}
\usepackage{amssymb,mathtools}
\usepackage{xcolor}
\definecolor{citationblue}{RGB}{0,70,140}
\usepackage[
  colorlinks=true,
  linkcolor=blue,
  citecolor=citationblue,
  urlcolor=citationblue
]{hyperref}
\usepackage{enumitem}
\usepackage{array}
\usepackage{tikz}
\newtheorem{theorem}{Theorem}[section]
\newtheorem{proposition}[theorem]{Proposition}
\newtheorem{lemma}[theorem]{Lemma}
\newtheorem{corollary}[theorem]{Corollary}
\newtheorem{remark}[theorem]{Remark}
\newcommand{\Sp}[1]{\mathcal S^{#1}}
\newcommand{\K}{\mathcal K(\mathcal H)}
\newcommand{\B}{\mathcal B(\mathcal H)}
\newcommand{\dd}{\Psi_{f,\lambda}}
\newcommand{\Tr}{\operatorname{Tr}}

\title[Arazy conjecture concerning Schur multipliers]
 {Arazy's conjecture  concerning Schur multipliers: revisited and resolved}

\subjclass[2020]{Primary 47B10; Secondary 47A55, 47A60}
\keywords{Schur--Hadamard multiplier, Schatten ideal, divided
difference
}

\begin{document}

\begin{abstract}
Let $\Sp r$ denote the Schatten--von Neumann class and let
$S_{\Psi_{f,\lambda}}$ be the Schur--Hadamard multiplier whose
symbol is the divided-difference matrix of $f$ along $\lambda$.
Let $0<\alpha,r<\infty$, let $f\in C^1([-1,1])$ satisfy $f(0)=0$, 
$|f'(t)|\lesssim |t|^\alpha$, and let $\lambda\in\ell^r$ be real
with $\|\lambda\|_{\ell^\infty}\leq1$.  We
determine the pairs $0<p,q\leq\infty$ for which
$S_{\Psi_{f,\lambda}}:\Sp q\to\Sp p$ is bounded for every such
$f$ and $\lambda$.  
Our principal new positive estimates treat
$q=\infty,1$ and $0<p<1$. 
Precisely, we show that  the boundedness holds exactly when
\[
 \frac1p\leq\frac{\alpha}{r}
       +\min\!\left\{1,\frac1q\right\}.
\]
In particular, this resolves the untreated cases in [Arazy, PAMS, 1982] and [Potapov, Sukochev, Tomskova, Adv. Math., 2015].  
Our  method also delivers a new proof of the main results in just cited papers. 
\end{abstract}

\author[J. Huang]{Jinghao Huang}
\address{Institute for  Advanced Study in  Mathematics of HIT, Harbin Institute of Technology, Harbin, 150001, China}
\email{{\color{blue}jinghao.huang@hit.edu.cn}}
 
 \author[F. Sukochev]{Fedor Sukochev}
\address{School of Mathematics and Statistics, University of NSW, Sydney,  2052, Australia}
\email{\color{blue}f.sukochev@unsw.edu.au}
\thanks{J. Huang were supported the NNSF of China (No. 12301160,  12471134 and 12671159); F. Sukochev was supported by the ARC (DP230100434)}

\maketitle

\section{Introduction}
\subsection{Notations}
Throughout, $\mathcal H$ denotes a separable infinite-dimensional
complex Hilbert space.  Its inner product
$\langle\cdot,\cdot\rangle$ is taken to be linear in the first
variable.  The symbols
$\mathcal B(\mathcal H)$ and $\mathcal K(\mathcal H)$ denote,
respectively, the bounded and compact operators on $\mathcal H$.
For $A\in\mathcal K(\mathcal H)$, let
$|A|=(A^*A)^{1/2}$ and let
\[
 s_1(A)\geq s_2(A)\geq\cdots\geq0
\]
be the singular values of $A$, repeated according to
multiplicity.  We write $\Tr$ for the canonical
trace.  For $0<p<\infty$, the Schatten--von Neumann class is
\[
 \Sp p
 =
 \left\{A\in\mathcal K(\mathcal H):
   \|A\|_p
   :=
   \bigl(\Tr|A|^p\bigr)^{1/p}
   =
  \left(\sum_{j\geq1}s_j(A)^p\right)^{1/p}
    <\infty\right\}.
\]
For standard background on Schatten ideals we refer to Gohberg and
Kre\u{\i}n \cite{GohbergKrein} and Simon \cite{Simon} (see also \cite{LSZ}).
It is a Banach space for $p\geq1$ and a  quasi-Banach
space for $0<p<1$.  For the exponent $p=\infty$ we set
\[
 \Sp\infty=\mathcal K(\mathcal H),
 \qquad
 \|A\|_\infty=\|A\|_{\mathcal B(\mathcal H)}.
\]
This follows Arazy's convention: his $C_p$ is our $\Sp p$ for
$1\leq p<\infty$, while
\[
 C_\infty=\Sp\infty=\mathcal K(\mathcal H)
 \subsetneq\mathcal B(\mathcal H).
\]
The weak trace class used below is (see \cite{Simon} or \cite[Chapter I]{LSZ})
\[
 \Sp{1,\infty}
 =
 \left\{A\in\mathcal K(\mathcal H):
   \|A\|_{1,\infty}:=
   \sup_{j\geq1}j\,s_j(A)<\infty\right\}.
\]
For $0<p<\infty$, $\ell^p$ is the space of scalar sequences
$x=(x_j)_{j\geq1}$ with
\[
 \|x\|_{\ell^p}
 =
 \left(\sum_{j\geq1}|x_j|^p\right)^{1/p}<\infty,
\]
and $\ell^\infty$ carries the supremum norm.
We abbreviate
$\|x\|_p:=\|x\|_{\ell^p}$.

Fix an orthonormal basis $(e_j)_{j\geq1}$ of $\mathcal H$, and
let $e_{jk}$ be the associated matrix units, determined by
$e_{jk}e_\ell=\delta_{k\ell}e_j$, where $\delta_{k\ell}$ is the
Kronecker delta.  If
$A\in\mathcal B(\mathcal H)$, write
$a_{jk}=\langle Ae_k,e_j\rangle$ for its matrix entries.  Given a scalar
matrix $M=(m_{jk})_{j,k\geq1}$, its Schur--Hadamard action is
defined initially on finite matrices by
\[
 S_M(A)=(m_{jk}a_{jk})_{j,k\geq1}.
\]
For $0<p,q\leq\infty$, we call $M$ a Schur multiplier from
$\Sp q$ to $\Sp p$ if this map extends boundedly from $\Sp q$ to
$\Sp p$.  Since finite matrices are operator-norm dense in
$\Sp\infty=\mathcal K(\mathcal H)$, this convention is
unambiguous also when $q=\infty$.  We denote the operator
(quasi-)norm by
\[
 \|S_M:\Sp q\to\Sp p\|
 =
 \sup_{\|A\|_q\leq1}\|S_M(A)\|_p.
\]
Here and below, a subscript on a
constant records the parameters on which it may depend, and
$X\lesssim_\gamma Y$ means
$X\leq C_\gamma Y$ for a constant $C_\gamma>0$.

\subsection{Arazy's conjecture}
For a bounded real sequence
$\lambda=(\lambda_j)_{j\geq1}$ with
$\|\lambda\|_{\ell^\infty}\leq1$, let $H_\lambda$ be the diagonal
self-adjoint operator defined by
$H_\lambda e_j=\lambda_j e_j$.  Thus, for
$0<r<\infty$, $\lambda\in\ell^r$ exactly when
$H_\lambda\in\Sp r$, and then
$\|H_\lambda\|_r=\|\lambda\|_{\ell^r}$.  Given such a sequence
and a function $f\in C^1([-1,1])$, Arazy considered
the divided-difference matrix
\begin{align}
 \Psi_{f,\lambda}(j,k)
 =
 \begin{cases}
 \displaystyle
 \frac{f(\lambda_j)-f(\lambda_k)}
      {\lambda_j-\lambda_k},
 &\lambda_j\ne\lambda_k, \\[1.2ex]
 f'(\lambda_j),&\lambda_j=\lambda_k.\label{def Psi}
 \end{cases}
\end{align}
The corresponding Schur--Hadamard multiplier is therefore
\[
 S_{\Psi_{f,\lambda}}(A)
   =\bigl(\Psi_{f,\lambda}(j,k)a_{jk}\bigr)_{j,k}.
\]
Such matrices are the discrete form of double operator integrals
(DOIs) with divided-difference kernel.
Birman and Solomyak
developed the systematic theory of DOIs in their foundational
papers
\cite{BirmanSolomyak1965,BirmanSolomyakMultipliers}.  In particular,
\cite[Theorem~5]{BirmanSolomyak1965} (as recalled in
\cite[Theorem~1.1]{Arazy}) implies that if $f'$ is H\"older
continuous on $[-1,1]$, then
\[
 S_{\Psi_{f,\lambda}}:\B\longrightarrow\B
 \quad\text{and}\quad
 S_{\Psi_{f,\lambda}}:\Sp p\longrightarrow\Sp p,
 \qquad 1\leq p\leq\infty,
\]
for every bounded real sequence $\lambda$.  For a later account of
the DOI theory, see their survey \cite{BirmanSolomyak}.

In 1982 Arazy proved \cite[Theorem 2.5]{Arazy} that, for
$0<\alpha,r<\infty$, if
$f(0)=0$, $|f'(t)|\leq C|t|^\alpha$, and
$\lambda\in\ell^r$, then
\[
 S_{\Psi_{f,\lambda}}:\Sp{p_1}\longrightarrow\Sp{p_2}
\]
is bounded when
\begin{equation}\label{eq:arazy-relation}
 1\leq p_2\leq2\leq p_1\leq\infty,
 \qquad
 \frac1{p_2}=\frac1{p_1}+\frac{\alpha}{r}.
\end{equation}
  Arazy then conjectured (see also \cite{PST,CGPT}) that the above result remains valid when the two
indices lie on the same side of $2$, i.e., 
\[
 1\le p_2\le p_1\le 2,
 \qquad\text{or}\qquad
 2\le p_2\le p_1\le \infty
\]
under the exponent relation in \eqref{eq:arazy-relation}.
Later, Arazy and Friedman\cite[Theorem 3.6]{ArazyFriedman}, Potapov and Sukochev\cite[Theorem 13]{PotapovSukochev2014} verified the conjecture for some special functions.


\subsection{Main result}

In \cite{PST}, 
Potapov, Sukochev, and Tomskova  considered a   question which is more general than Arazy conjecture:
\begin{quote}
Under which conditions on $p$, $q$, on the function $f$ and on the sequence $\lambda$, 
    is the Schur multiplier   $S_{\Psi_{f,\lambda}}:\Sp q\to \Sp p$ bounded?\cite[Question A]{PST}
\end{quote}
It was shown in 
\cite[Corollary~7]{PST}  that
\[
 S_{\Psi_{f,\lambda}}:\Sp q\longrightarrow\Sp p
 \quad\text{is bounded whenever}\quad
 0<p<\infty,\quad 1<q<\infty,\quad
 \frac1p=\frac{\alpha}{r}+\frac1q.
\]
This together with the multiplier duality recorded in
\cite[pp.~63--64]{Arazy} (see Lemma \ref{lem:duality} below) yields
the case when  $1<p<\infty$ and $q=\infty $.
In summary,  
the cases when
\[
 q\le 1,\qquad 0 <p<\infty,\qquad  \mbox{ and }\qquad 
 q=\infty,\qquad p\le 1 
\]
remain untreated (see \cite[Section~4.2]{PST} for a discussion concerning
a partial answer to 
the case when $q=\infty $ via  an interpolation method).

The main purpose of the present paper is to treat the Arazy conjecture and \cite[Question A]{PST} 
for the full range of indices $ 0<p,q\leq \infty$, working with quasi-Banach Schatten ideals.
The main new tool used for this case is the  weak $(1,1)$-estimate established in \cite{CPSZ}. 
In Remark \ref{rem:q-greater-than-one}, we indicate that our method allows to completely cover the main results in \cite{Arazy} and \cite{PST}. 

\begin{theorem}\label{thm:main-classification}
Fix $0<\alpha,r<\infty$ and $0<p,q\leq\infty$, with
$1/\infty=0$.  The following are equivalent.
\begin{enumerate}[label=\textup{(\roman*)}]
\item For every $f\in C^1([-1,1])$  satisfying
$f(0)=0$ and
\[
 |f'(t)|\leq C_f|t|^\alpha,\qquad -1\leq t\leq1,
\]
for some constant $C_f>0$, and every real sequence
$\lambda\in\ell^r$ satisfying $\|\lambda\|_\infty\leq1$, the map
\[
 S_{\Psi_{f,\lambda}}:\Sp q\longrightarrow\Sp p
\]
is bounded.
\item
\begin{equation}\label{eq:full-condition}
 \frac1p\leq \frac{\alpha}{r}
       +\min\!\left\{1,\frac1q\right\}.
\end{equation}
\end{enumerate}
\end{theorem}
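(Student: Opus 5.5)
We prove the two implications separately. By monotonicity it is enough to work at the critical exponent: $\Sp{p_0}\subset\Sp p$ contractively for $p_0\le p$, so once boundedness into $\Sp{p_0}$ holds with $1/p_0=\alpha/r+\min\{1,1/q\}$, it holds for every $p$ with $1/p\le 1/p_0$. For (ii)$\Rightarrow$(i) we split into three regimes.

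\emph{The case $1<q<\infty$.} Here the result is \cite[Corollary~7]{PST}.

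\emph{The case $q=\infty$.} We must show $S_\Psi:\K\to\Sp{p}$ with $1/p=\alpha/r$, including $p\le1$. The plan is to factor the divided difference. Let $g(t)=f(t)/t$ if $t\ne0$ and $g(0)=0$; then $|g(t)|\lesssim|t|^\alpha$. One gets
\[
\Psi_{f,\lambda}(j,k)=\frac{\lambda_j g(\lambda_j)-\lambda_k g(\lambda_k)}{\lambda_j-\lambda_k},
\]
and the goal is to write it as $\sum$ (diagonal weight $|\lambda_j|^{\alpha/2}$) $\times$ (a Schur multiplier bounded on $\B$ or weak-type) $\times$ (diagonal weight $|\lambda_k|^{\alpha/2}$). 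The cleaner route is a dyadic decomposition. Put $P_n$ = projection onto $\{j:2^{-n-1}<|\lambda_j|\le 2^{-n}\}$; then $\operatorname{rank}P_n=:N_n$ with $\sum_n N_n2^{-nr}\lesssim\|\lambda\|_r^r$. Write
\[
S_\Psi(A)=\sum_{m,n}P_m S_\Psi(A)P_n .
\]
On the block $(m,n)$ with $m\le n$ we use $|\Psi(j,k)|\lesssim 2^{-m\alpha}$. On $|\lambda_j|\approx|\lambda_k|$, the rescaled function $t\mapsto 2^{m(1+\alpha)}f(2^{-m}t)$ has bounded $C^1$ norm on the relevant annulus, but we only know it is $C^1$, not $C^{1,\epsilon}$. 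Hence we invoke the weak $(1,1)$ estimate of \cite{CPSZ}, together with its dual, which gives boundedness $\B\to\B$ up to a logarithmic or weak-type loss; that loss is absorbed by $\Sp{1,\infty}$ embeddings within finite rank.

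For $m<n-1$ with $\lambda_j,\lambda_k$ separated in scale, we expand
\[
\frac{1}{\lambda_j-\lambda_k}=\frac{1}{\lambda_j}\sum_{s}(\lambda_k/\lambda_j)^s .
\]
This yields a rank-structured sum of products of diagonals, each bounded by $2^{-m\alpha}$ times geometric decay in $n-m$.

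Each block then satisfies $\|P_mS_\Psi(A)P_n\|_p\lesssim 2^{-\max(m,n)\alpha}\min(N_m,N_n)^{1/p}\cdot c^{|m-n|}\|A\|_\infty$, using $\|B\|_p\le\operatorname{rank}(B)^{1/p}\|B\|$ and, for $p<1$, $p$-subadditivity. We sum $p$-th powers and use $p\alpha=r$, so that $\sum_n 2^{-n\alpha p}N_n\lesssim\|\lambda\|_r^r$. This is the step where the quasi-Banach range $p<1$ requires care: the geometric factor $c^{|m-n|}$ is what makes the double sum converge.

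\emph{The case $q\le1$.} Since $\Sp q\subset\Sp1$, it suffices to treat $q=1$, $1/p=\alpha/r+1$. Write $A=\sum_i s_i x_i\otimes y_i$ with rank-one pieces. For a rank one $A=x\otimes y$, $S_\Psi(A)=\sum_{m,n}P_m(\cdot)P_n$ is handled by the same block estimate, now with the block $P_m(x\otimes y)P_n$ replaced by the matrix $\Psi(j,k)x_j\bar y_k$. The bound for this is the $q=\infty$ estimate applied to the multiplier with weights absorbed, giving $\|S_\Psi(x\otimes y)\|_{p}\lesssim\|x\|\|y\|$ with $1/p=1/p_\infty+1$ by Hölder: $P_mS_\Psi(x\otimes y)P_n$ is a product of $\Sp{2}$-type column/row vectors and the $\Sp{p_\infty}$ block. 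We then use $p$-subadditivity, which for $p\le1$ requires $\sum s_i^p$. This is too weak, so instead we apply the Hölder factorisation $A=BC$ with $B,C\in\Sp2$ and $\|S_\Psi(BC)\|_p\le$ (block estimate) $\cdot\|B\|_2\|C\|_2$, exploiting that $P_m$ commutes with diagonal multipliers.

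For (i)$\Rightarrow$(ii) we choose extremal examples, with $f(t)=|t|^\alpha t$ (smoothed) and $\lambda_j=j^{-1/r}(\log j)^{-2/r}$. If $1/p>\alpha/r+1/q$ with $q\ge1$, we test on diagonal $A$ with entries $\lambda_j^{\beta}$. Then $S_\Psi(A)=\operatorname{diag}(f'(\lambda_j)a_j)$, and a suitable choice of $\beta$ makes $A\in\Sp q$ but $S_\Psi(A)\notin\Sp p$. If $q<1$, then $\min\{1,1/q\}=1$; since rank-one $A=x\otimes x$ lies in every $\Sp q$, we take $x$ to be a normalized vector with $x_j\approx\lambda_j^{\alpha/2}$-weighted entries. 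This gives $S_\Psi(A)\succeq$ the diagonal restriction, via a Schur positivity argument when $f$ is monotone, and forces $1/p\le\alpha/r+1$.

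The main obstacle is the near-diagonal blocks for merely $C^1$ $f$ when $p<1$. Birman--Solomyak boundedness on $\B$ fails there, so the weak $(1,1)$ theorem of \cite{CPSZ}, combined with finite-rank interpolation, must supply the uniform block bound.
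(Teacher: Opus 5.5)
Your reduction to the critical exponent, the appeal to \cite{PST} for $1<q<\infty$, and the diagonal test inputs for necessity when $q\ge1$ are fine; the last is essentially the paper's argument. The gaps are in the new cases.

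\textbf{Sufficiency for $q\le1$ (Theorem~\ref{thm:q1-strong}).} After dropping the rank-one decomposition, you propose $A=BC$ with $B,C\in\Sp{2}$. This cannot interact with $S_\Psi$, which is a bimodule map only over \emph{diagonal} operators: $S_\Psi(D_1XD_2)=D_1S_\Psi(X)D_2$. A general $A\in\Sp{1}$ admits no factorisation $D_1XD_2$ with diagonal $D_i\in\Sp{2}$ and $X\in\B$. Such products satisfy $|\sum_{j,k\le N}a_{jk}|\le\|D_1\|_2\|X\|_\infty\|D_2\|_2$, which fails for $x\otimes x$ with $0\le x\in\ell^2\setminus\ell^1$. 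What is missing is the splitting $\Psi_{f,\lambda}=G+R$, where $G(j,k)=h(\lambda_j)+h(\lambda_k)$ and $h(t)=f(t)/t$ (your $g$). The part $S_G(A)=D_gA+AD_g$ is handled globally by H\"older, $\|D_gA\|_p\le\|g\|_{r/\alpha}\|A\|_1$. Only $R$ is cut into dyadic blocks and summed along lags $\ell$, using:
\begin{itemize}
\item $\sum_m\|P_mAP_{m+\ell}\|_1\le\|A\|_1$;
\item on adjacent shells, the weak-type bound of \cite{CPSZ} together with $\|X\|_p\le C_pK^{1/p-1}\|X\|_{1,\infty}$ for $\operatorname{rank}X\le K$;
\item on separated shells, the decay $2^{-\beta|\ell|}$ of $R$, with $\beta=\min\{1,\alpha\}$;
\item the scalar H\"older inequality $\ell^{r/\alpha}\cdot\ell^1\subset\ell^p$.
\end{itemize}

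\textbf{Sufficiency for $q=\infty$.} The same omission breaks this case. In your geometric expansion the $s=0$ term contains $f(\lambda_j)/\lambda_j=h(\lambda_j)$, of size $\approx2^{-m\alpha}$ with \emph{no} decay in $n-m$. So your block bounds do not sum: $\sum_{n\ge m+2}\min\{N_m,N_n\}=\infty$ once infinitely many shells are nonempty. Moreover, a logarithmic loss on near-diagonal blocks cannot be absorbed at the critical exponent, since $\sum_m2^{-mr}N_m(\log N_m)^p$ is not controlled by $\|\lambda\|_r^r$. For $p\le1$ the paper avoids all of this. It writes $S_\Psi(A)$ as a sum of rank-one row pieces $R_j$ and column pieces $C_k$, with $\|R_j\|_p=\|R_j\|_2\le L_\alpha(f)|\lambda_j|^\alpha\|A\|_\infty$, and applies the $p$-triangle inequality. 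For $1<p<\infty$ it dualises \cite[Corollary~7]{PST}.

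\textbf{Necessity for $0<q<1$.} Your argument here also fails. Diagonal inputs cannot cover the range $1+\alpha/r<1/p\le1/q+\alpha/r$, because H\"older bounds them there. For $A=u\otimes u$, your positivity step is wrong in two ways:
\begin{itemize}
\item monotonicity of $f$ does not make $\Psi_{f,\lambda}$ positive semidefinite; that requires operator monotonicity, which $t|t|^\alpha$ lacks;
\item even for $X\ge0$ and $p<1$, Jensen gives $\Tr X^p\le\sum_jX_{jj}^p$, so the diagonal bounds $\|X\|_p$ from above, not below.
\end{itemize}
With $f_0(t)=t|t|^\alpha$ there may be nothing to find. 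If $\lambda$ is constant on dyadic shells, then $S_\Psi(u\otimes u)=S_G(u\otimes u)+S_R(u\otimes u)$. The first term has rank at most two. The second has the same singular values as the shell-indexed matrix $\bigl(R_{mn}\|P_mu\|\|P_nu\|\bigr)_{m,n}$ with $|R_{mn}|\lesssim2^{-\alpha\min\{m,n\}}\eta_{m-n}$ ($\eta$ as in \eqref{def eta}). Hence $\|S_\Psi(u\otimes u)\|_p$ is bounded uniformly in $\|u\|\le1$.

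The missing device is Lemma~\ref{lem:diagonal-realization}. Take $\lambda_j=\frac12d_j^{1/\alpha}$ and $f$ a sum of disjoint bumps with $f(\lambda_j)=0$ and $f'(\lambda_j)=d_j$. Then the symbol is exactly $\operatorname{diag}(d_j)$, so $S_\Psi(u\otimes u)=\operatorname{diag}(d_ju_j^2)$ while $\|u\otimes u\|_q=1$. Choose $u_j^2=d_j^t/\sum_{i\le N}d_i^t$ for $j\le N$, with $d\in\ell^{r/\alpha}\setminus\ell^t$ and $1/t=1/p-1$. This gives $\|S_\Psi(u\otimes u)\|_p=(\sum_{i\le N}d_i^t)^{1/t}\to\infty$.
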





Recently, 
Conde-Alonso, Gonz\'alez-P\'erez, Parcet, and Tablate \cite{CGPT}
proved H\"ormander--Mikhlin criteria for Schur
multipliers; see also \cite{GPPR,PotapovSukochev2011,OS} for related results. 
Their works can be viewed as addressing the limiting case of Arazy conjecture when $\alpha =  0$ and $p=q$\cite{Parcet}.
The weak 
$(1,1)$-estimate from \cite{CPSZ} also addresses the case when $\alpha =0$ and, in this sense,  there is a connection between just cited papers and our current work. However, the results in the present paper do not intersect with theirs because 
they do not address the original
Arazy conjecture with unequal-exponent ($p\ne q$).

In the last section of this paper, we briefly describe how our results and techniques can be extended to $L_p$-spaces affiliated with general semifinite von Neumann algebras $\mathcal{M}$ and explain the difference when $\mathcal{M}$ is atomless.

\subsection{Sharpness  of the exponent condition}

Arazy proved the necessary exponent condition for Banach indices
in \cite[Proposition~3.1]{Arazy}.  For $0<p<1$, Aleksandrov and
Peller \cite[Theorem~4.1]{AleksandrovPeller} proved that the
diagonal matrix $(d_j\delta_{jk})_{j,k\geq1}$ defines a bounded
Schur multiplier on $\Sp p$ exactly when
$d\in\ell^{p/(1-p)}$ (see also \cite[Lemma 2.2.3]{McS}).


In Section  \ref{sec:sharp-classification} below, we 
 prove that the condition \eqref{eq:full-condition} is sharp,  in other words, that 
 the Schur multipliers in 
 Arazy conjecture and \cite[Question A]{PST} are unbounded when
\[
\begin{array}{ll}
q=\infty:
 & \displaystyle \frac1p>\frac{\alpha}{r},\\[2ex]
1<q<\infty:
 & \displaystyle \frac1p>\frac{\alpha}{r}+\frac1q,\\[2ex]
0<q\leq1:
 & \displaystyle \frac1p>1+\frac{\alpha}{r}
\end{array}
\]
(see Theorem~\ref{thm:sharp-failure}  below). 
Writing $q$ for the input exponent and $p$ for the target
exponent, our main result is the following sharp classification.
In the table below, the word ``holds'' means
 that 
 the corresponding Schur multiplier symbol is bounded from $\Sp q$ to $\Sp p$ 
 for every
$f\in C^1([-1,1])$ satisfying
$f(0)=0$ and
$|f'(t)|\leq C_f|t|^\alpha$ on $[-1,1]$ for some $C_f>0$,
and every real sequence
$\lambda\in\ell^r$ satisfying $\|\lambda\|_{\ell^\infty}\leq1$.
\begin{center}
\begingroup
\small
\renewcommand{\arraystretch}{1.18}
\begin{tabular}{@{}
 >{\raggedright\arraybackslash}p{0.2\textwidth}
 >{\raggedright\arraybackslash}p{0.35\textwidth}
 >{\raggedright\arraybackslash}p{0.41\textwidth}@{}}
\hline
\textbf{Input range}
& \textbf{Conclusion}
& \textbf{Source}\\
\hline
$1<q<\infty$
 & For $0<p<\infty$, holds iff
   $1/p\leq\alpha/r+1/q$.
 & \cite[Corollary~7]{PST};
   Corollary~\ref{cor:finite-positive};
   Theorem~\ref{thm:sharp-failure}.\\
$q=\infty$
 & For $0<p\leq\infty$, holds iff
   $1/p\leq\alpha/r$.
 & \cite[Theorem~2.5]{Arazy};
   \cite[Corollary~7]{PST} and
   \cite[pp.~63--64]{Arazy};
   Corollary~\ref{cor:qinf-positive-range};
   Theorem~\ref{thm:sharp-failure}.\\
$0<q\leq1$
 & Holds iff $1/p\leq1+\alpha/r$.
 & Corollary~\ref{cor:finite-positive};
   Theorem~\ref{thm:sharp-failure}.\\
\hline
\end{tabular}
\endgroup
\end{center}

\section{The boundedness of $S_{\Psi_{f,\lambda }}: \Sp \infty  \to \Sp p$}\label{s 2}
Throughout the paper, 
for $0<\alpha<\infty$
and functions   $f\in C^1([-1,1])$ with $f(0)=0$ satisfying  $ \sup_{0<|t|\leq1}\frac{|f'(t)|}{|t|^\alpha}<\infty $, 
we use the following notation 
\[
 L_\alpha(f)
 :=
 \sup_{0<|t|\leq1}\frac{|f'(t)|}{|t|^\alpha}<\infty .
\]

  Adding a constant to $f$ does not change
its divided differences, so the normalization $f(0)=0$ is
harmless.  Let $\lambda$ be a real sequence with
$\|\lambda\|_\infty\leq1$.  The matrix $\dd$ defined above is
symmetric:
\begin{equation}\label{eq:symmetric}
 \dd(j,k)=\dd(k,j).
\end{equation}
We use the following result.
\begin{theorem}\cite[Corollary~7]{PST} \label{thm:PST}
Let $0<\alpha,r<\infty$.  Suppose that $f\in C^1([-1,1])$ is real-valued,
$f(0)=0$, and $L_\alpha(f)<\infty$, and that
$\lambda\in\ell^r$ is real with $\|\lambda\|_\infty\leq1$.  If
\[
 0<p<\infty,\qquad 1<q<\infty,\qquad
 \frac1p=\frac{\alpha}{r}+\frac1q,
\]
then
\begin{equation}\label{eq:PST}
 \|S_{\dd}:\Sp q\longrightarrow\Sp p\|
 \leq C_{\alpha,p,q,f}\,\|\lambda\|_r^\alpha,
\end{equation}
where the constant $C_{\alpha,p,q,f}$ is independent of $\lambda$.
\end{theorem}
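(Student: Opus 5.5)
The plan is to reduce the $\Sp q\to\Sp p$ estimate to an $\Sp q\to\Sp q$ estimate followed by H\"older's inequality for Schatten classes. The exponent relation $\frac1p=\frac{\alpha}{r}+\frac1q$ suggests writing $S_{\dd}(A)$, up to bounded $\Sp q$-multipliers, as $|H_\lambda|^\alpha X$ and $X|H_\lambda|^\alpha$ with $\|X\|_q\lesssim\|A\|_q$. Then H\"older's inequality together with $\||H_\lambda|^\alpha\|_{r/\alpha}=\|\lambda\|_r^\alpha$ gives exactly \eqref{eq:PST}. Throughout, $1<q<\infty$ is essential. In that range we have two tools: the Lipschitz estimate of Potapov--Sukochev, which says that $\Psi_{g,\mu}$ is a bounded Schur multiplier on $\Sp q$ with norm $\lesssim_q\|g'\|_\infty$ for every Lipschitz $g$ and every real $\mu$; and the boundedness of triangular truncation on $\Sp q$.

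\emph{Step 1 (splitting by size).} By \eqref{eq:symmetric} and boundedness of triangular truncation (after reordering the basis so that $|\lambda_j|$ is nonincreasing), it suffices to treat the part of $\dd$ supported on $\{(j,k):|\lambda_k|\leq|\lambda_j|\}$. The complementary part is handled by taking adjoints: $S_{\dd}(A)^*=S_{\dd}(A^*)$ up to complex conjugation of the real symbol. On this region, write
\[
 \dd(j,k)=|\lambda_j|^\alpha\,\phi(j,k),\qquad
 \phi(j,k):=|\lambda_j|^{-\alpha}\dd(j,k),
\]
with $\phi=0$ on rows where $\lambda_j=0$. There $\dd(j,k)=0$ whenever $|\lambda_k|\leq|\lambda_j|$, since $f(0)=0$ and $f'(0)=0$. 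Hence $S_{\dd}(A)=|H_\lambda|^\alpha S_\phi(A)$ on this piece, and the theorem follows once $S_\phi$ is bounded on $\Sp q$ with a bound independent of $\lambda$.

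\emph{Step 2 (dyadic blocks).} Let $I_n=\{j:2^{-n-1}<|\lambda_j|\leq2^{-n}\}$ and let $P_n$ be the corresponding coordinate projections. For $j\in I_n$ and $|\lambda_k|\leq|\lambda_j|$, both $\lambda_j$ and $\lambda_k$ lie in $[-2^{-n},2^{-n}]$. Let $f_n$ be $f$ on this interval, extended to be constant outside it. Then $\|f_n'\|_\infty\leq L_\alpha(f)2^{-n\alpha}$, so $2^{n\alpha}\Psi_{f_n,\lambda}$ is a bounded $\Sp q$ multiplier with norm $\lesssim_q L_\alpha(f)$, uniformly in $n$. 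Since $|\lambda_j|^{-\alpha}2^{-n\alpha}$ is a bounded, row-only factor on $I_n$, it acts as left multiplication by a bounded diagonal operator. Consequently each row block $P_nS_\phi(\cdot)$ is bounded on $\Sp q$ uniformly in $n$.

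\emph{Step 3 (reassembly; main obstacle).} The hard part is summing the row blocks $\sum_nP_nS_\phi(A)$ without losing a factor depending on the number of blocks. Uniform boundedness of disjoint row blocks does not by itself give boundedness of their sum. My first attempt will use the fact that $\Sp q$ ($1<q<\infty$) is UMD, together with the noncommutative Khintchine inequality. Concretely, I would write the sum as an average over random signs $\varepsilon_n$ of $\bigl(\sum_n\varepsilon_nP_n\bigr)\,T_\varepsilon(A)$, where $T_\varepsilon=\sum_m\varepsilon_mS_{2^{m\alpha}\Psi_{f_m,\lambda}}$ restricted appropriately. I would then exploit that a single smooth function $F(t)=\sum_m\varepsilon_m\chi_m(t)f_m(t)$, built with a partition of unity $\chi_m$ adapted to dyadic annuli, has $\|F'\|_\infty\lesssim L_\alpha(f)$ after the $2^{m\alpha}$ normalisation. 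The Lipschitz estimate then applies to $\Psi_{F,\lambda}$ directly. The cross terms between adjacent annuli are absorbed by a second triangular truncation.

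If this randomisation fails, the fallback is to cite the Lipschitz estimate in its weighted form, i.e.\ for $g(t)=\int_0^t|s|^{-\alpha}f'(s)\,ds$-type reparametrisations. Either way, once $\|S_\phi:\Sp q\to\Sp q\|\leq C_{\alpha,q,f}$ is established, H\"older's inequality yields
\[
 \|S_{\dd}(A)\|_p\leq2\,\||H_\lambda|^\alpha\|_{r/\alpha}\,C\|A\|_q
 =C'\|\lambda\|_r^\alpha\|A\|_q.
\]
The factor $2$ accounts for the two triangular pieces and is only a constant for $p<1$.
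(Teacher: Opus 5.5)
Your Steps 1 and 2 are sound. Triangular truncation and the adjoint trick reduce \eqref{eq:PST}, via H\"older with $\||H_\lambda|^\alpha\|_{r/\alpha}=\|\lambda\|_r^\alpha$, to a $\lambda$-uniform bound $\|S_\phi(Y)\|_q\lesssim\|Y\|_q$ for $Y$ supported in $\{|\lambda_k|\leq|\lambda_j|\}$. The constant extensions $f_n$ then give this bound on each row shell separately.

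The gap is Step~3, which carries all the content of the theorem: you give only a plan, and its stated mechanism does not work. With a partition of unity, $F=\sum_m\varepsilon_m2^{m\alpha}\chi_mf_m$ satisfies $\mathbb E_\varepsilon[\varepsilon_nF(x)]=2^{n\alpha}\chi_n(x)f_n(x)$. So averaging $(\sum_n\varepsilon_nP_n)S_{\Psi_{F,\lambda}}$ produces on row shell $n$ the divided difference of $2^{n\alpha}\chi_nf$, not of $2^{n\alpha}f$. The discrepancy $2^{n\alpha}\bigl((1-\chi_n)f\bigr)^{[1]}(\lambda_j,\lambda_k)$ is nonzero in general and again depends on the row shell $n$. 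It therefore poses exactly the reassembly problem you started with, and a triangular truncation, which only restricts supports, cannot remove it. The fallback is not a reduction either: $\phi(j,k)$ is an average of $g'=|s|^{-\alpha}f'$ over $[\lambda_k,\lambda_j]$ with the row-dependent weight $|s|^\alpha/|\lambda_j|^\alpha$, not a divided difference of $g$.

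The missing observation is that no partition of unity is needed. Divided differences are linear in the function, so the $T_\varepsilon$ you wrote down is exactly $S_{\Psi_{F_\varepsilon,\lambda}}$ with $F_\varepsilon=\sum_m\varepsilon_m2^{m\alpha}f_m$. Since $f_m'=f'\mathbf 1_{(-2^{-m},2^{-m})}$ a.e.,
\[
 |F_\varepsilon'(t)|\leq|f'(t)|\sum_{m\geq0:\,2^{-m}>|t|}2^{m\alpha}\leq\frac{2^\alpha}{2^\alpha-1}\,L_\alpha(f),
\]
uniformly in $\varepsilon$ and $\lambda$. UMD and Khintchine are unnecessary. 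For $Y$ supported on finitely many shells, $\mathbb E\,\varepsilon_n\varepsilon_m=\delta_{nm}$ gives
\[
 \sum_nP_nS_{2^{n\alpha}\Psi_{f_n,\lambda}}(Y)=\mathbb E_\varepsilon\Bigl[\Bigl(\sum_n\varepsilon_nP_n\Bigr)S_{\Psi_{F_\varepsilon,\lambda}}(Y)\Bigr].
\]
The triangle inequality in $\Sp q$ plus Potapov--Sukochev then finishes, with ties $\lambda_j=\lambda_k$ handled by the contractive block-diagonal projection as in Step~2 of Lemma~\ref{lem:q1-adjacent-block}.

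So repaired, your proof genuinely differs from the paper's argument in Remark~\ref{rem:q-greater-than-one}. The paper splits $\Psi_{f,\lambda}=G+R$ and bounds $S_R$ only block by block in $\Sp q$, with decay $2^{-\alpha d_{m,n}}\eta_{m-n}$. It reaches $\Sp p$ through the rank factor $K_{m,n}^{1/p-1/q}$, direct sums along lags, scalar H\"older and the $p$-triangle inequality. Yours gives the stronger, $r$-independent factorization $S_{\Psi_{f,\lambda}}(A)=|H_\lambda|^\alpha X_1+X_2|H_\lambda|^\alpha$ with $\|X_i\|_q\lesssim_{q,\alpha}L_\alpha(f)\|A\|_q$. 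The price is that it leans on triangular truncation and is therefore confined to $1<q<\infty$. The paper's block--rank scheme is what survives at $q=1$, using the weak-type estimate of \cite{CPSZ}.
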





Arazy recorded the following multiplier-duality principle in his
concluding remark \cite[pp.~63--64]{Arazy}, citing the standard
duality facts but not giving a   proof.   Below, we include a full proof for completeness. 
For
$1\leq s\leq\infty$, put
\[
 \mathcal C_s=
 \begin{cases}
  \Sp s,&1\leq s<\infty,\\
  \K,&s=\infty,
 \end{cases}
 \qquad
 \mathcal C_s^\times=
 \begin{cases}
  \B,&s=1,\\
  \Sp{s'},&1<s<\infty,\\
  \Sp1,&s=\infty,
 \end{cases}
\]
where $1/s+1/s'=1$.  
The dual  
 $\mathcal C_s^*$ of $\mathcal C_s$ can be identified with 
 $\mathcal C_s^\times$ via trace duality.

\begin{lemma}\label{lem:duality}
Let $1\leq p,q\leq\infty$, let $M=(m_{jk})_{j,k\geq1}$ be a scalar
matrix, and let 
$M^{\mathsf T}=(m_{kj})_{j,k\geq1}$.  If
\[
 S_M:\mathcal C_q\longrightarrow\mathcal C_p
\]
is bounded, then its Banach adjoint is
\[
 S_{M^{\mathsf T}}:
 \mathcal C_p^\times\longrightarrow\mathcal C_q^\times.
\]
Moreover,
\[
 \|S_{M^{\mathsf T}}:
   \mathcal C_p^\times\to\mathcal C_q^\times\|
 =
 \|S_M:\mathcal C_q\to\mathcal C_p\|.
 \]
\end{lemma}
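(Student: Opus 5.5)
The argument is a duality computation on finite matrices followed by a density argument. The key identity is that for finite matrices $A,B$,
\[
 \Tr\bigl(S_M(A)B\bigr)=\sum_{j,k} m_{jk}a_{jk}b_{kj}
 =\sum_{j,k}a_{jk}\,(m^{\mathsf T})_{kj}b_{kj}
 =\Tr\bigl(A\,S_{M^{\mathsf T}}(B)\bigr),
\]
since $(S_M(A)B)$ has trace $\sum_{j,k}(S_M A)_{jk}b_{kj}$. So under trace duality $S_{M^{\mathsf T}}$ is formally the adjoint of $S_M$. The task is to show that the Banach adjoint $S_M^*:\mathcal C_p^*\to\mathcal C_q^*$, transported to $\mathcal C_p^\times\to\mathcal C_q^\times$, really is the map $B\mapsto S_{M^{\mathsf T}}(B)$. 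Here $S_{M^{\mathsf T}}(B)$ means the entrywise product, which is a well-defined matrix for any $B\in\B$. Once this is done, the norm equality follows from $\|T^*\|=\|T\|$ for bounded operators between Banach spaces.

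First, let $T$ be the bounded extension of $S_M$ to $\mathcal C_q$. Matrix entries are continuous functionals on $\mathcal C_q$ (indeed on $\B$), and finite matrices are dense in $\mathcal C_q$ for all $1\le q\le\infty$. Hence $(T A)_{jk}=m_{jk}a_{jk}$ for every $A\in\mathcal C_q$.

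Next, fix $B\in\mathcal C_p^\times$ and let $\phi_B(X)=\Tr(XB)$ be the corresponding functional on $\mathcal C_p$. The functional $T^*\phi_B=\phi_B\circ T$ lies in $\mathcal C_q^*$, so it equals $\phi_{D}$ for a unique $D\in\mathcal C_q^\times$. To identify $D$, test on matrix units. We have $\phi_D(e_{kj})=\Tr(e_{kj}D)=d_{jk}$. On the other hand,
\[
 \phi_B(T e_{kj})=m_{kj}\Tr(e_{kj}B)=m_{kj}b_{jk}.
\]
Therefore $d_{jk}=m_{kj}b_{jk}$, that is, $D=S_{M^{\mathsf T}}(B)$ entrywise.

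This shows that $S_{M^{\mathsf T}}$ maps $\mathcal C_p^\times$ into $\mathcal C_q^\times$ and coincides with $T^*$ under the isometric identifications $\mathcal C_s^*\cong\mathcal C_s^\times$. Consequently
\[
 \|S_{M^{\mathsf T}}:\mathcal C_p^\times\to\mathcal C_q^\times\|=\|T^*\|=\|T\|.
\]
The one point needing care is that these identifications are isometric in every case, including the endpoints $\K^*=\Sp1$ and $(\Sp1)^*=\B$. This is standard trace duality, with norming by finite-rank operators.

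The main obstacle is the endpoint bookkeeping. When $p=1$ the space $\mathcal C_p^\times=\B$ is not the closure of finite matrices, so $S_{M^{\mathsf T}}$ on $\B$ cannot be defined by extension from finite matrices. It must instead be read as the entrywise product, and the argument above handles this, since it identifies $D$ through matrix entries alone. When $q=\infty$ the target $\mathcal C_q^\times=\Sp1$ is fine. The only other subtlety is that $\mathcal C_q$ is chosen to be $\K$ rather than $\B$ precisely so that finite matrices are dense in it.
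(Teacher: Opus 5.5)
Your proposal is correct and follows essentially the same route as the paper. In both, the Banach adjoint is identified by testing on the matrix units $e_{kj}$, which gives entries $m_{kj}b_{jk}$. The norm equality then comes from $\|T^*\|=\|T\|$ under the trace-duality identifications $\mathcal C_s^*\cong\mathcal C_s^\times$. Your remarks on the isometric identifications at the endpoints, and on reading $S_{M^{\mathsf T}}$ entrywise on $\mathcal B(\mathcal H)$, spell out what the paper leaves implicit.
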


\begin{proof}
Let
$T^*:\mathcal C_p^\times\to\mathcal C_q^\times$
be the Banach adjoint of $T=S_M$ under the pairing
$(A,B)\mapsto\Tr(AB)$.  For
$A\in\mathcal C_p^\times$, put $C=T^* (A)$.
Write $c_{jk}=\langle Ce_k,e_j\rangle$ for the matrix entries of
$C$.
For each matrix unit $e_{kj}\in\mathcal C_q$, we have 
\[
 c_{jk}
 =\Tr(Ce_{kj})
 =\Tr\!\bigl(A\,T(e_{kj})\bigr)
 =m_{kj}a_{jk}.
\]
Thus, $T^* =S_{M^{\mathsf T}}$.  
The 
equality of the
norms is a well-known fact
\cite[Theorem~4.10, p.~98]{Rudin}.
\end{proof}

\begin{corollary} 
\label{cor:duality-compact}
Let $1<p<\infty$, let $p'=p/(p-1)$, and let $M$ be a scalar
matrix.  If
\[
 S_M:\Sp{p'}\longrightarrow\Sp1
\]
is bounded, then
\[
 S_{M^{\mathsf T}}:\B\longrightarrow\Sp p
\]
is bounded with the same norm, which is the same as $ \left\|S_{M^{\mathsf T}}\mid_{ \mathcal{K}(\mathcal{H})} : \mathcal{K}(\mathcal{H})  \to \mathcal{S}^p\right\|  $.  
\end{corollary}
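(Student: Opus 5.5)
We apply Lemma~\ref{lem:duality} with $q=p'$ and target exponent $1$. Here $\mathcal C_{p'}=\Sp{p'}$ because $1<p'<\infty$, and $\mathcal C_1=\Sp1$. The dual classes are $\mathcal C_1^\times=\B$ and $\mathcal C_{p'}^\times=\Sp{p}$, since $(p')'=p$. The lemma then says that the Banach adjoint of $S_M:\Sp{p'}\to\Sp1$ is $S_{M^{\mathsf T}}:\B\to\Sp p$. It also gives
\[
\|S_{M^{\mathsf T}}:\B\to\Sp p\|=\|S_M:\Sp{p'}\to\Sp1\|.
\]
This proves the first two assertions. The only point to check is that the adjoint really acts as a Schur multiplier on all of $\B$, not just on finite matrices. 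The computation $c_{jk}=m_{kj}a_{jk}$ in the proof of Lemma~\ref{lem:duality} is valid for every $A\in\B$, because $e_{kj}\in\Sp{p'}$ and the pairing $\Tr(AB)$ is defined for $A\in\B$ and $B\in\Sp1$. So the adjoint is entrywise multiplication by $M^{\mathsf T}$ on every bounded operator.

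For the restriction to $\K$, one inequality is immediate: restricting to a subspace can only lower the norm, so
\[
\|S_{M^{\mathsf T}}|_{\K}:\K\to\Sp p\|\le\|S_{M^{\mathsf T}}:\B\to\Sp p\|.
\]
For the reverse inequality there are two routes. The first is a second application of Lemma~\ref{lem:duality}, now with source $\mathcal C_\infty=\K$. If $S_{M^{\mathsf T}}:\K\to\Sp p$ is bounded, its adjoint is $S_M:\Sp{p'}\to\Sp1$ with the same norm, because $(M^{\mathsf T})^{\mathsf T}=M$, $\mathcal C_p^\times=\Sp{p'}$ and $\mathcal C_\infty^\times=\Sp1$. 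Combining this with the first part, all three norms coincide.

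The second route is a direct approximation argument. Given $A\in\B$, let $P_n$ be the projection onto $\operatorname{span}\{e_1,\dots,e_n\}$. Then $P_nAP_n\in\K$ and $\|P_nAP_n\|\le\|A\|$. Since Schur multiplication commutes with compression, $S_{M^{\mathsf T}}(P_nAP_n)=P_nS_{M^{\mathsf T}}(A)P_n$. The $\Sp p$ norm of $P_nXP_n$ increases to $\|X\|_p$ as $n\to\infty$. Hence $\|S_{M^{\mathsf T}}(A)\|_p\le\|S_{M^{\mathsf T}}|_{\K}\|\,\|A\|$.

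The main obstacle is therefore only bookkeeping: identifying the dual spaces correctly, including $\mathcal C_1^\times=\B$ and $\mathcal C_\infty^\times=\Sp1$, and confirming that the Banach adjoint coincides with the Schur multiplier on all of $\B$. The lemma already handles both points.
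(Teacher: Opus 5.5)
Your proposal is correct, and your first route is the paper's proof: the paper chains two applications of Lemma~\ref{lem:duality} as $\|S_{M^{\mathsf T}}:\K\to\Sp p\|=\|S_M:\Sp{p'}\to\Sp1\|=\|S_{M^{\mathsf T}}:\B\to\Sp p\|$. You also make explicit something the paper leaves implicit, namely that the boundedness on $\K$ needed to invoke the lemma the second time comes from restricting the bounded map on $\B$. Your second route via the compressions $P_nAP_n$ is a valid, more elementary alternative for the reverse inequality that avoids the second duality step.
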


\begin{proof}One only needs to observe that 
\[
 \|S_{M^{\mathsf T}}:\K\to\Sp p\| \stackrel{\tiny \mbox{Lemma~\ref{lem:duality}}}{=} \|S_M:\Sp{p'}\to\Sp1\|
 \stackrel{\tiny \mbox{Lemma~\ref{lem:duality}}}{=}
 \|S_{M^{\mathsf T}}:\B\to\Sp p\|
.
\]
\end{proof}

The following is an immediate consequence of  the symmetry condition 
\eqref{eq:symmetric},  Theorem  \ref{thm:PST} and Corollary \ref{cor:duality-compact}, which resolves the untreated case in \cite[Section 4.2]{PST} for $1< p<\infty$.
\begin{proposition}
\label{prop:qinf-banach}
Let $1<p<\infty$, 
$0<\alpha<\infty$ and suppose
$\lambda\in\ell^{\alpha p}$ with
$\|\lambda\|_\infty\leq1$.  Then
\begin{equation}\label{eq:qinf-banach}
 \|S_{\dd}:\B\longrightarrow\Sp p\|
 \leq
 C_{\alpha,p,f}\,
 \|\lambda\|_{\alpha p}^{\alpha}.
\end{equation}
Moreover, the same estimate holds after restricting the
domain to
\(
\Sp\infty=\K.
\)
\end{proposition}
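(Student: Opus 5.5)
We will derive the estimate by duality from Theorem~\ref{thm:PST}, applying it with the exponents chosen so that its target space is $\Sp1$.

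First we fix the exponents. Put $r:=\alpha p$, so that $\lambda\in\ell^r$ by hypothesis and $\alpha/r=1/p$. Take the target exponent in Theorem~\ref{thm:PST} equal to $1$ and the input exponent equal to $p'=p/(p-1)$. The relation required there reads
\[
 1=\frac{\alpha}{r}+\frac1{p'},
\]
and this holds because $\alpha/r+1/p'=1/p+1-1/p=1$. Since $1<p<\infty$, we have $1<p'<\infty$, so the hypotheses of Theorem~\ref{thm:PST} are met.

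The theorem then gives
\[
 \|S_{\dd}:\Sp{p'}\to\Sp1\|\leq C_{\alpha,1,p',f}\,\|\lambda\|_{\alpha p}^{\alpha}.
\]
One caveat: Theorem~\ref{thm:PST} is stated for real-valued $f$. If $f$ is complex-valued, we write $f=\operatorname{Re}f+i\operatorname{Im}f$. Divided differences are linear in $f$, so
\[
 \dd=\Psi_{\operatorname{Re}f,\lambda}+i\,\Psi_{\operatorname{Im}f,\lambda}.
\]
Both $\operatorname{Re}f$ and $\operatorname{Im}f$ vanish at $0$ and have $L_\alpha\leq L_\alpha(f)$. Applying the theorem to each part and using the triangle inequality in $\Sp1$ gives the same bound with a doubled constant.

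Next we apply Corollary~\ref{cor:duality-compact} with $M=\dd$. By \eqref{eq:symmetric}, $M^{\mathsf T}=M$, so the corollary yields
\[
 \|S_{\dd}:\B\to\Sp p\|=\|S_{\dd}:\K\to\Sp p\|=\|S_{\dd}:\Sp{p'}\to\Sp1\|.
\]
This gives \eqref{eq:qinf-banach} with $C_{\alpha,p,f}:=C_{\alpha,1,p',f}$, doubled if $f$ is complex. The restriction to $\Sp\infty=\K$ is covered by the same equality of norms.

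We do not expect a genuine obstacle. The only points that need care are matching the exponents correctly, so that the target exponent $1$ is allowed by the range $0<p<\infty$ in Theorem~\ref{thm:PST}, and the real/complex reduction above. The identification of the adjoint of $S_{\dd}$ on $\B=(\Sp1)^*$ is already handled by Lemma~\ref{lem:duality}.
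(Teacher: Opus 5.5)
Your proposal is correct and is the paper's proof: apply Theorem~\ref{thm:PST} with target $\Sp1$, input $\Sp{p'}$ and $r=\alpha p$, then use Corollary~\ref{cor:duality-compact} together with the symmetry \eqref{eq:symmetric}. Your real/imaginary-part reduction for complex-valued $f$ is a harmless addition, since the paper tacitly takes $f$ real-valued.
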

\begin{proof}
Since \( 
 1 
  =\frac{\alpha}{\alpha p}+\frac1{p'},
\)
 it follows from   Theorem \ref{thm:PST} (with
\(
 ( \mathcal{S}^1,\mathcal{S}^{p'},\ell^{\alpha p})
\))   that 
\[
 \|S_{\dd}:\Sp{p'}\longrightarrow\Sp1\|
 \leq
 C_{\alpha,p,f}\,\|\lambda\|_{\alpha p}^{\alpha}.
\]
Now, combining  Corollary~\ref{cor:duality-compact} and condition 
\eqref{eq:symmetric}, we  complete the proof.
\end{proof}

For $p=1$, the boundedness of the Schur multiplier
$S_{\dd}:\Sp\infty\to\Sp1$ under
$\lambda\in\ell^\alpha$, $0<\alpha<\infty$, is already contained in
\cite[Theorem~2.5]{Arazy}.
Below, we prove the case when $0<p<1$,  which were not treated  in \cite[Section 4.2]{PST} and \cite{Arazy}.

\begin{theorem} 
\label{thm:qinf-quasi}
Let $0<p\leq1$, $0<\alpha<\infty$ and suppose
$\lambda\in\ell^{\alpha p}$ with
$\|\lambda\|_\infty\leq1$.  Then
\begin{equation}\label{eq:qinf-quasi}
 \|S_{\dd}:\B\longrightarrow\Sp p\|
 \leq
 2^{1/p}L_\alpha(f)\,
 \|\lambda\|_{\alpha p}^{\alpha}.
\end{equation}
\end{theorem}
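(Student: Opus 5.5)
The plan is a direct decomposition of $S_{\dd}(A)$ into rank-one pieces, avoiding interpolation and duality entirely. For $0<p\leq1$ the quasi-norm $\|\cdot\|_p$ is $p$-subadditive:
\[
 \|X+Y\|_p^p\leq\|X\|_p^p+\|Y\|_p^p .
\]
So it suffices to write $S_{\dd}(A)$ as a sum of rank-one operators whose norms are controlled by $|\lambda_j|^{\alpha}\|A\|$. The first step is a pointwise bound on the symbol. For $\lambda_j\ne\lambda_k$ we have
\[
 \dd(j,k)=\int_0^1 f'\bigl(\lambda_k+t(\lambda_j-\lambda_k)\bigr)\,dt .
\]
Every point of the segment between $\lambda_k$ and $\lambda_j$ has modulus at most $\max\{|\lambda_j|,|\lambda_k|\}$. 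Hence
\[
 |\dd(j,k)|\leq L_\alpha(f)\max\{|\lambda_j|,|\lambda_k|\}^\alpha ,
\]
and the diagonal case $\lambda_j=\lambda_k$ is immediate.

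Next I would split the index set. Assign each pair $(j,k)$ to the index carrying the larger value of $|\lambda|$, breaking ties by taking the smaller index. Write $R_j$ for the set of $k$ such that $(j,k)$ is assigned to $j$ (the ``row part''), and $C_j$ for the set of $k\neq j$ such that $(k,j)$ is assigned to $j$ (the ``column part''). Every pair then lies in exactly one of these pieces, and for $k\in R_j\cup C_j$ we have $|\dd(j,k)|,|\dd(k,j)|\leq L_\alpha(f)|\lambda_j|^\alpha$. Define
\[
 X_j=\sum_{k\in R_j}\dd(j,k)a_{jk}e_{jk},\qquad
 Y_j=\sum_{k\in C_j}\dd(k,j)a_{kj}e_{kj}.
\]
Then $X_j$ is rank one: it is the single row $j$. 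Its $\Sp p$-norm equals the $\ell^2$-norm of that row, which is at most
\[
 L_\alpha(f)|\lambda_j|^\alpha\Bigl(\sum_k|a_{jk}|^2\Bigr)^{1/2}\leq L_\alpha(f)|\lambda_j|^\alpha\|A\| .
\]
The same bound holds for $Y_j$, which is a single column.

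By $p$-subadditivity this gives
\[
 \Bigl\|\sum_{j\leq N}(X_j+Y_j)\Bigr\|_p^p\leq 2L_\alpha(f)^p\|A\|^p\sum_j|\lambda_j|^{\alpha p}.
\]
Taking $p$-th roots yields exactly the constant $2^{1/p}L_\alpha(f)\|\lambda\|_{\alpha p}^\alpha$. The remaining step, and the only mildly delicate one, is passing to the infinite sum. The same $p$-subadditivity shows that the partial sums are Cauchy in the complete quasi-normed space $\Sp p$, so they converge in $\Sp p$. Convergence in $\Sp p$ implies entrywise convergence, and every matrix entry $(j,k)$ is eventually stabilised by the partial sums, so the limit has entries $\dd(j,k)a_{jk}$ and therefore coincides with $S_{\dd}(A)$. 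This shows that $S_{\dd}$ is defined on all of $\B$, not only on finite matrices, with the stated bound. I expect this bookkeeping (tie-breaking, disjointness of the pieces, and identifying the limit) to be the main point needing care; the analytic content is entirely the rank-one row/column estimate.
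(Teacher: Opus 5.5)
Your proposal is correct and is essentially the paper's proof: the same pointwise bound $|\dd(j,k)|\leq L_\alpha(f)\max\{|\lambda_j|,|\lambda_k|\}^\alpha$ and the same split of $S_{\dd}(A)$ into rank-one row and column pieces, each assigned to the index with the larger $|\lambda|$ and bounded by its Hilbert--Schmidt norm. These are then summed via $p$-subadditivity. The differences are minor. The paper avoids tie-breaking by letting rows take the non-strict inequality $|\lambda_k|\leq|\lambda_j|$ and columns the strict one. Your Cauchy-sequence and entrywise-limit argument makes explicit the passage to infinite sums, and hence the definition of $S_{\dd}$ on all of $\B$, which the paper leaves implicit.
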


\begin{proof}
For $x\ne y$, we have 
\[
 \frac{f(x)-f(y)}{x-y}
 =\int_0^1 f'\bigl(y+s(x-y)\bigr)\,ds.
\]
Since
$|y+s(x-y)|\leq\max\{|x|,|y|\}$ for $0\leq s\leq1$,
it follows that \[
 \left| \frac{f(x)-f(y)}{x-y}\right|
 \le   L_\alpha(f)
 \max\{|x|^\alpha ,|y|^\alpha\},
\]
which together with 
 the definition of $\dd(j,k)$ when $\lambda_j=\lambda_k$
 (see \eqref{def Psi})
 yields that 
\begin{equation}\label{eq:divided-difference-pointwise}
 |\dd(j,k)|
 \leq L_\alpha(f)
       \max\{|\lambda_j|^\alpha,|\lambda_k|^\alpha\}.
\end{equation}
For $A=(a_{jk})\in\B$, decompose its Schur product as
\[
 S_{\dd}(A)=\sum_{j\geq1}R_j+\sum_{k\geq1}C_k,
\]
where
\[
 R_j=\sum_{\{k:\,|\lambda_k|\leq|\lambda_j|\}}
       \dd(j,k)a_{jk}e_{jk},
 \qquad
 C_k=\sum_{\{j:\,|\lambda_j|<|\lambda_k|\}}
       \dd(j,k)a_{jk}e_{jk}.
\]

\begin{figure}[ht]
\makeatletter
\renewcommand{\@captionfont}{\normalfont\tiny}
\makeatother
\setlength{\abovecaptionskip}{-3pt}
\centering
\scalebox{0.5}{%
\begin{tikzpicture}[
  x=0.9cm,
  y=-0.9cm,
  every node/.style={font=\small}
]

  \fill[red!25] (1,2) rectangle (2,5);

  \fill[blue!25] (2,2) rectangle (5,3);

  \foreach \i in {0,...,5}{
    \draw[gray!60] (\i,0)--(\i,5);
    \draw[gray!60] (0,\i)--(5,\i);
  }


  \node[blue!65!black,anchor=west] at (5.45,2.5) {$R_3$};
  \draw[->,blue!65!black] (5.35,2.5)--(4.85,2.5);

  \node[red!65!black,anchor=north] at (-0.5,3.3) {$C_2$};
  \draw[->,red!65!black] (-0.2,3.5)--(1.2,3.5);
\end{tikzpicture}
}
\caption{
$R_3$ (blue) and $C_2$ (red)
when
$|\lambda_1|>\cdots>|\lambda_5|>\cdots$}
\label{fig:Rj-Ck}
\end{figure}
In particular, their ranks are at most one. 
Therefore, 
\[
\begin{aligned}
 \|R_j\|_p
 =\|R_j\|_2
  &=\left(\sum_{\{k:\,|\lambda_k|\leq|\lambda_j|\}}
      |\dd(j,k)a_{jk}|^2\right)^{1/2}\\
 &\stackrel{\eqref{eq:divided-difference-pointwise}}{\leq} L_\alpha(f)|\lambda_j|^\alpha
  \left(\sum_k|a_{jk}|^2\right)^{1/2}\leq L_\alpha(f)|\lambda_j|^\alpha\|A\|_\infty.
\end{aligned}
\]
Similarly,
\[
\begin{aligned}
 \|C_k\|_p
 =\|C_k\|_2
 & =\left(\sum_{\{j:\,|\lambda_j|<|\lambda_k|\}}
      |\dd(j,k)a_{jk}|^2\right)^{1/2}\\
&\stackrel{\eqref{eq:divided-difference-pointwise}}{\leq}  L_\alpha(f)|\lambda_k|^\alpha
  \left(\sum_j|a_{jk}|^2\right)^{1/2}\leq L_\alpha(f)|\lambda_k|^\alpha\|A\|_\infty.
\end{aligned}
\]
Using the $p$-triangle inequality in $\Sp p$
\cite[Proposition~4.6(i)]{FackKosaki}, we obtain
\[
 \|S_{\dd}(A)\|_p^p
 \leq
 2L_\alpha(f)^p
 \|\lambda\|_{\alpha p}^{\alpha p}\|A\|_\infty^p.
\]
This   proves \eqref{eq:qinf-quasi}.
\end{proof}
Using  Proposition~\ref{prop:qinf-banach} when $1<p<\infty$ and
Theorem~\ref{thm:qinf-quasi} when $0<p\leq1$, we obtain the following corollary.
\begin{corollary}
\label{cor:qinf-positive}
Let $0<p<\infty$, $0<\alpha<\infty$ and suppose
$\lambda\in\ell^{\alpha p}$ with
$\|\lambda\|_\infty\leq1$.  Then
\[
 \|S_{\dd}:\B\longrightarrow\Sp p\|
 \leq C_{\alpha,p,f}\|\lambda\|_{\alpha p}^{\alpha}.
\]
\end{corollary}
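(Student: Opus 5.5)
The corollary follows by splitting the range of $p$ at $p=1$ and invoking the two results just proved. Both give the same form of bound, with the dependence on $\lambda$ entering only through $\|\lambda\|_{\alpha p}^{\alpha}$. The only thing to arrange is that their constants can be merged into a single $C_{\alpha,p,f}$.

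\emph{Case $1<p<\infty$.} Apply Proposition~\ref{prop:qinf-banach} directly. Its hypotheses are exactly $\lambda\in\ell^{\alpha p}$ with $\|\lambda\|_\infty\leq1$, and it yields \eqref{eq:qinf-banach}. Behind this sits Theorem~\ref{thm:PST} with exponents $(1,p',\alpha p)$. This is admissible because $1<p'<\infty$ and $1=\alpha/(\alpha p)+1/p'$. The output is then transferred to $\B\to\Sp p$ through Corollary~\ref{cor:duality-compact} and the symmetry \eqref{eq:symmetric}, which gives $\dd^{\mathsf T}=\dd$. One point needs checking here. Theorem~\ref{thm:PST} is stated for real-valued $f$, whereas the corollary allows a general $f\in C^1([-1,1])$. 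If $f$ is complex-valued, I would write $f=\operatorname{Re}f+i\operatorname{Im}f$. Both parts still vanish at $0$ and satisfy $L_\alpha(\operatorname{Re}f),L_\alpha(\operatorname{Im}f)\leq L_\alpha(f)$. Since $\dd$ is linear in $f$, the triangle inequality in $\Sp p$ (a Banach space for $p>1$) gives the estimate, at the cost of a factor of $2$ in the constant.

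\emph{Case $0<p\leq1$.} Apply Theorem~\ref{thm:qinf-quasi}. Its proof uses only the pointwise bound \eqref{eq:divided-difference-pointwise}, which holds equally for complex-valued $f$. It gives the explicit constant $C_{\alpha,p,f}=2^{1/p}L_\alpha(f)$.

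Taking $C_{\alpha,p,f}$ to be whichever of these two constants applies for the given $p$ completes the argument. I expect no real obstacle. The only points to verify are that the exponent arithmetic feeding Theorem~\ref{thm:PST} lands in its admissible range, which it does precisely because $p>1$ forces $1<p'<\infty$, and the reduction to real-valued $f$ described above.
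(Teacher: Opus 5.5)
Your proposal is correct and follows the paper's own argument: the paper proves the corollary by invoking Proposition~\ref{prop:qinf-banach} for $1<p<\infty$ and Theorem~\ref{thm:qinf-quasi} for $0<p\leq1$. Your extra reduction of a complex-valued $f$ to $\operatorname{Re}f$ and $\operatorname{Im}f$ is valid, and it addresses a point the paper leaves implicit, since Theorem~\ref{thm:PST} is stated only for real-valued $f$.
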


\begin{corollary}
\label{cor:qinf-positive-range}
Let $0<\alpha,r<\infty$, let $0<p\leq\infty$, and suppose
\(
 \frac1p\leq\frac{\alpha}{r}. 
\)
For every $f\in C^1([-1,1])$ satisfying
$f(0)=0$ and $L_\alpha(f)<\infty$, and every real sequence
$\lambda\in\ell^r$ satisfying $\|\lambda\|_\infty\leq1$, the map
\[
 S_{\Psi_{f,\lambda}}:\Sp\infty=\K\longrightarrow\Sp p
\]
is bounded.
\end{corollary}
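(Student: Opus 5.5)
The plan is to reduce to Corollary~\ref{cor:qinf-positive} by an embedding argument. The case $p=\infty$ is handled separately. Here the condition reads $0\leq\alpha/r$, which is automatic. Boundedness of $S_{\dd}$ on $\K$ follows from the pointwise bound \eqref{eq:divided-difference-pointwise}, or more directly by embedding: pick any finite $p_0$ with $1/p_0\leq\alpha/r$, apply the finite case below, and use the contractive inclusion $\Sp{p_0}\subset\K$ with $\|\cdot\|_\infty\leq\|\cdot\|_{p_0}$.

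For $0<p<\infty$, the hypothesis $1/p\leq\alpha/r$ is equivalent to $r\leq\alpha p$. Since $\lambda\in\ell^r$, the monotonicity of $\ell^s$ spaces gives $\lambda\in\ell^{\alpha p}$ with
\[
 \|\lambda\|_{\alpha p}\leq\|\lambda\|_r .
\]
This holds because $\|\lambda\|_\infty\leq1$, though the inclusion $\ell^r\subset\ell^{s}$ for $s\geq r$ holds regardless. We also have $\|\lambda\|_\infty\leq1$. Corollary~\ref{cor:qinf-positive} then applies with exponent $p$ and yields
\[
 \|S_{\dd}:\B\to\Sp p\|\leq C_{\alpha,p,f}\|\lambda\|_{\alpha p}^{\alpha}\leq C_{\alpha,p,f}\|\lambda\|_r^\alpha .
\]
Restricting the domain to $\K\subset\B$ with the same norm finishes the argument. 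The restriction is legitimate: for finite matrices the Schur action coincides, and finite matrices are norm dense in $\K$, so the bounded extension from finite matrices agrees with the restriction of the $\B$-multiplier.

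No step is a real obstacle once Corollary~\ref{cor:qinf-positive} is available. The only points needing care are bookkeeping ones. First, the case $p=\infty$ must be separated, since Corollary~\ref{cor:qinf-positive} is stated only for $p<\infty$. Second, one should check that the operator on $\B$ furnished by Proposition~\ref{prop:qinf-banach} and Theorem~\ref{thm:qinf-quasi} agrees with the extension from finite matrices used in the definition of a Schur multiplier $\Sp\infty\to\Sp p$. For the quasi-Banach range this is immediate from the explicit entrywise decomposition in the proof of Theorem~\ref{thm:qinf-quasi}. For $1<p<\infty$ it follows from the duality statement in Corollary~\ref{cor:duality-compact}, which already identifies the $\K$-norm with the $\B$-norm.
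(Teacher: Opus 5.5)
Your argument is correct and is essentially the paper's proof. For $p<\infty$ you use $\ell^r\subseteq\ell^{\alpha p}$ and Corollary~\ref{cor:qinf-positive}; for $p=\infty$ you pass through a finite exponent (the paper takes $p_0=r/\alpha$) and the inclusion $\Sp{p_0}\subseteq\K$. One caveat: drop the aside that the $p=\infty$ case follows from the pointwise bound \eqref{eq:divided-difference-pointwise}. Entrywise bounds alone do not control Schur multiplier norms on $\K$, and combined with the row/column splitting of Theorem~\ref{thm:qinf-quasi} they only give this when $\lambda\in\ell^{\alpha}$. The embedding route is the one that actually works.
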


\begin{proof}
If $p<\infty$, then $r\leq\alpha p$, and hence
$\ell^r\subseteq\ell^{\alpha p}$.  The assertion therefore follows
from Corollary~\ref{cor:qinf-positive}.

Let $p=\infty$.  Since $0<r/\alpha<\infty$ and
$\lambda\in\ell^r=\ell^{\alpha(r/\alpha)}$,  it follows from 
Corollary~\ref{cor:qinf-positive} (applied with target exponent
$r/\alpha$) that 
\[
 S_{\Psi_{f,\lambda}}:\Sp\infty=\K (\subset \B)
 \longrightarrow\Sp{r/\alpha} \subseteq \Sp\infty.
\]
The proof is complete. 
\end{proof}

\section{The boundedness of $S_{\Psi_{f,\lambda }}: \Sp 1  \to \Sp p$, $0<p<1$}
 \label{s 3}

When $q=1$, the exponent relation $\frac{1}{p}= \frac{\alpha}{r} +\frac{1}{q}$ becomes
\begin{equation}\label{eq:q1}
 \frac1p=1+\frac{\alpha}{r},
 \qquad 0<p<1.
\end{equation}
The present section treats the  boundedness  of $S_{\Psi_{f,\lambda}}$, $\lambda \in \ell^r$,  when the above equality holds.
The results from  \cite{CPSZ} play a crucial role in the proof of Theorem \ref{thm:q1-strong}.  
We also use ideas from  \cite{HSZ,Ricard}.

\begin{theorem}\label{thm:q1-strong}
Let $0<\alpha,r<\infty$, let $p=r/(r+\alpha)$, and suppose
$f\in C^1([-1,1])$ satisfies
$f(0)=0$ and $L_\alpha(f)<\infty$.  If
$\lambda\in\ell^r$ such that 
$\|\lambda\|_\infty\leq1$, then
\begin{equation}\label{eq:q1-strong}
 \|S_{\dd}(A)\|_p
 \leq
 C_{\alpha,p}L_\alpha(f)\,
 \|\lambda\|_r^\alpha\|A\|_1,
 \qquad A\in\Sp1.
\end{equation}
\end{theorem}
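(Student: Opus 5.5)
The plan is to decompose $S_{\dd}(A)$ dyadically according to the size of $|\lambda_j|$. On each dyadic piece, $\dd$ will be replaced by the divided difference of a \emph{Lipschitz} function, to which the weak $(1,1)$ estimate of \cite{CPSZ} applies. The resulting weak-type bounds will then be converted into $\Sp p$ bounds using the finite rank of each piece. For $m\geq0$ let $P_m$ be the diagonal projection onto $\{e_j: 2^{-m-1}<|\lambda_j|\leq2^{-m}\}$, and put $N_m=\operatorname{rank}P_m$. Since $\|\lambda\|_\infty\leq1$,
\[
 \sum_m 2^{-mr}N_m\leq 2^r\|\lambda\|_r^r .
\]
Let $Q_m=\sum_{n\geq m}P_n$ and $Q_m'=\sum_{n>m}P_n$ (the kernel of $H_\lambda$ is harmless, since $\dd$ vanishes on it by $f'(0)=0$, except against large entries, which are handled symmetrically). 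Exactly as in the proof of Theorem~\ref{thm:qinf-quasi}, but with dyadic blocks in place of single rows and columns, we write
\[
 S_{\dd}(A)=\sum_m\Bigl(P_mS_{\dd}(A)Q_m+Q_m'S_{\dd}(A)P_m\Bigr).
\]
Each summand has rank at most $N_m$.

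The first step is to bound a single summand. On $P_m\mathcal H\times Q_m\mathcal H$, every pair $(\lambda_j,\lambda_k)$ lies in $[-2^{-m},2^{-m}]^2$, so $\dd(j,k)=\Psi_{g_m,\lambda}(j,k)$. Here $g_m$ equals $f$ on $[-2^{-m},2^{-m}]$ and is extended constantly outside this interval. The function $g_m$ is Lipschitz with constant $L_\alpha(f)2^{-m\alpha}$. Hence
\[
 P_mS_{\dd}(A)Q_m=S_{\Psi_{g_m,\lambda}}(P_mAQ_m).
\]
By the weak $(1,1)$ estimate of \cite{CPSZ}, this gives
\[
 \|P_mS_{\dd}(A)Q_m\|_{1,\infty}\lesssim L_\alpha(f)2^{-m\alpha}\|P_mAQ_m\|_1 .
\]
An operator $T$ of rank at most $N$ satisfies $\|T\|_p\lesssim_p N^{1/p-1}\|T\|_{1,\infty}$, and here $1/p-1=\alpha/r$. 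Therefore
\[
 \|P_mS_{\dd}(A)Q_m\|_p^p\lesssim L_\alpha(f)^p\,(2^{-mr}N_m)^{\theta}\,\|P_mAQ_m\|_1^{p},
 \qquad \theta:=\frac{\alpha}{r+\alpha}.
\]
The same bound holds for the column pieces, using the symmetry \eqref{eq:symmetric}.

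The second step is summation. The $p$-triangle inequality \cite[Proposition~4.6(i)]{FackKosaki} reduces matters to bounding
\[
 \sum_m(2^{-mr}N_m)^\theta b_m^p,
 \qquad b_m=\|P_mAQ_m\|_1 .
\]
The key numerology is $\theta+p=1$, so Hölder's inequality with exponents $1/\theta$ and $1/p$ gives
\[
 \sum_m(2^{-mr}N_m)^\theta b_m^p\leq\Bigl(\sum_m2^{-mr}N_m\Bigr)^\theta\Bigl(\sum_mb_m\Bigr)^p\lesssim\|\lambda\|_r^{\alpha p}\Bigl(\sum_mb_m\Bigr)^p .
\]
This yields \eqref{eq:q1-strong}, provided that
\[
 \sum_m\|P_mAQ_m\|_1\lesssim\|A\|_1 .
\]

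This last inequality is the main obstacle, because it fails for general staircase truncations in $\Sp1$. For example, for a rank-one $A=\xi\otimes\eta$ it reduces to $\sum_m\|P_m\xi\|\,\|Q_m\eta\|$, and $\sum_m\|P_m\xi\|$ need not be controlled by $\|\xi\|$. My first attempt would be to avoid the staircase altogether: replace the fixed blocks by a decomposition adapted to $A$. Concretely, write $A=\sum_i s_iu_i\otimes v_i$ and expand $S_{\dd}(A)$ with the triangular pieces absorbed into the weak-type estimate. The point is that \cite{CPSZ} already controls triangular truncation $\Sp1\to\Sp{1,\infty}$. So the goal is to apply it only once, to a single full operator of the form
\[
 \sum_m 2^{m\alpha\cdot\gamma}P_mAQ_m
\]
with a weight exponent $\gamma$ chosen so that the Hölder bookkeeping above still closes. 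This follows the weighting ideas of \cite{HSZ,Ricard}. If that fails, the fallback is to interpolate the rank-counting step in the weak-$\ell^1$ scale $\Sp{1,\infty}$ rather than termwise.
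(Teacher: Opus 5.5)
There is a genuine gap, and it sits exactly at the step you flag. Your per-summand bound (weak $(1,1)$ estimate, then rank counting) and your H\"older step with $\theta+p=1$ are sound. The inequality $\sum_m\|P_mAQ_m\|_1\lesssim\|A\|_1$, however, is false, and the obstruction lies in the symbol rather than in the staircase shape. For $j\in I_m$, $k\in I_n$ with $n\gg m$,
\[
 \dd(j,k)=\frac{h(\lambda_j)-z\,h(\lambda_k)}{1-z}\approx h(\lambda_j),\qquad h(t)=\frac{f(t)}{t},\quad z=\frac{\lambda_k}{\lambda_j}.
\]
This has size up to $2^{-\alpha m}$ independently of $n-m$ (take $f(t)=t|t|^\alpha$). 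So any scheme that bounds pieces of $S_{\dd}(A)$ by (weight) $\times$ (trace norm of the matching piece of $A$) and then sums must control something like $\sum_m\|P_mA\|_1$ or $\sum_m\|P_mAQ_m\|_1$. Neither is bounded by $\|A\|_1$. Your weighted-staircase idea is not made precise and does not remove this non-decaying part. Expanding $A=\sum_i s_iu_i\otimes v_i$ cannot work for a target $\Sp p$ with $p<1$: the $p$-triangle inequality then costs $\sum_i s_i^p$, i.e. gives a bound by $\|A\|_p$, not $\|A\|_1$.

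The missing idea is to peel off exactly that part. Write $\dd=G+R$ with $G(j,k)=h(\lambda_j)+h(\lambda_k)$.

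\begin{itemize}
\item Then $S_G(A)=D_gA+AD_g$ with $D_g=\operatorname{diag}(h(\lambda_j))$. This is estimated \emph{globally}, with no block decomposition, by the Schatten H\"older inequality $\|D_gA\|_p\le\|g\|_\sigma\|A\|_1$, where $\sigma=r/\alpha$, $1/p=1+1/\sigma$ and $\|g\|_\sigma\lesssim L_\alpha(f)\|\lambda\|_r^\alpha$.
\item The residual $R$ vanishes when $\lambda_j=0$ or $\lambda_k=0$, and this is what disposes of $\ker H_\lambda$. Note that $\dd(j,k)=h(\lambda_j)\neq0$ when $\lambda_k=0$, so $\dd$ itself does not vanish there, contrary to your remark.
\item On separated shells $|m-n|\ge2$, the identity $R_f(x,y)=(zh(x)-h(y))/(1-z)$, expanded as a geometric series of products $a(x)b(y)$, gives a trace-norm bound $\lesssim L_\alpha(f)2^{-\alpha\min\{m,n\}}2^{-\beta|m-n|}$, with $\beta=\min\{1,\alpha\}$.
\item Replace the staircase by a decomposition by lag $\ell=n-m$. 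For fixed $\ell$ the blocks $P_mAP_{m+\ell}$ have mutually orthogonal row supports and column supports. Hence $\sum_m\|P_mAP_{m+\ell}\|_1\le\|A\|_1$ and $\|S_R(A^{(\ell)})\|_p^p=\sum_m\|S_R(P_mAP_{m+\ell})\|_p^p$.
\end{itemize}

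Your rank counting and H\"older bookkeeping then go through lag by lag. For $|\ell|\le1$ use the weak $(1,1)$ bound for $S_{\dd}(B)-S_G(B)$. For $|\ell|\ge2$ use the trace-norm bound together with $\|X\|_p\le K^{1/p-1}\|X\|_1$. The factor $2^{-\beta|\ell|}$ then makes the sum over $\ell$ converge under the $p$-triangle inequality.

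One smaller point: the kernel in \cite{CPSZ} is zero on the diagonal. The values $f'(\lambda_j)$ at $\lambda_j=\lambda_k$ must therefore be added back separately, which costs only a pinching in $\Sp1$.
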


For the remainder
of this section, fix $\alpha,r,p$, a real-valued $f$, and $\lambda$ as in
Theorem~\ref{thm:q1-strong}.  Put
\begin{align}\label{sigma}
 \sigma=\frac r\alpha=\frac{p}{1-p},
 \qquad
 \frac1p=1+\frac1\sigma,
\end{align}
and define
\begin{align}\label{def h}
 h(t)=
 \begin{cases}
  f(t)/t,&t\ne0,\\
  0,&t=0.
 \end{cases}
\end{align}
Let $g=(g_j)_{j\geq1}$ be given by
$g_j=h(\lambda_j)$, and let
$G=(G(j,k))_{j,k\geq1}$ be the matrix symbol defined by
\[
 G(j,k):=g_j+g_k.
\]
Consider the residual symbol
\begin{align}\label{def Rf}
 R_f(x,y)
 =
 \begin{cases}
 \displaystyle
 \frac{f(x)-f(y)}{x-y}-h(x)-h(y),&x\ne y,\\[1.2ex]
 f'(x)-2h(x),&x=y.
 \end{cases}
\end{align}
Define the corresponding  residual matrix symbol
$R=(R(j,k))_{j,k\geq1}$ by
\[
 R(j,k):=R_f(\lambda_j,\lambda_k).
\]
As before, for any matrix $A=(a_{jk})$, we write
\(
 S_R(A):=\bigl(R(j,k)a_{jk}\bigr)_{j,k\geq1}.
\)

\begin{lemma}
\label{lem:q1-leading-residual}
With the notation above, we have 
\begin{equation}\label{eq:h-bound}
 |h(t)|\leq\frac{L_\alpha(f)}{1+\alpha}|t|^\alpha,
 \qquad t\in[-1,1],
\end{equation}
and $g\in\ell^\sigma$ satisfies
\begin{equation}\label{eq:g-bound}
 \|g\|_\sigma
 \leq C_\alpha  L_\alpha(f)\|\lambda\|_r^\alpha,
\end{equation}
where $C_\alpha =\frac{1}{1+\alpha}$. 
Moreover, 
\[
 \Psi_{f,\lambda}=G+R,
 \qquad
 S_{\Psi_{f,\lambda}}=S_G+S_R,
\]
and $R(j,k)=0$ whenever $\lambda_j=0$ or $\lambda_k=0$.
Finally, for every $A\in\Sp1$,
\begin{equation}\label{eq:leading-bound}
 \|S_G(A)\|_p
 \leq 2^{1/p}\|g\|_\sigma\|A\|_1.
\end{equation}
\end{lemma}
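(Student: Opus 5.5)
The lemma has four parts: a pointwise bound for $h$, an $\ell^\sigma$ bound for $g$, the algebraic splitting $\Psi_{f,\lambda}=G+R$ with vanishing of $R$ on the zero set of $\lambda$, and the estimate \eqref{eq:leading-bound}. I will prove them in this order, and each is elementary.

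\textbf{Bound \eqref{eq:h-bound}.} Since $f(0)=0$, for $t\neq0$ we have
\[
f(t)=\int_0^1 t f'(st)\,ds,
\]
and hence
\[
|h(t)|\le\int_0^1|f'(st)|\,ds\le L_\alpha(f)|t|^\alpha\int_0^1 s^\alpha\,ds=\frac{L_\alpha(f)}{1+\alpha}|t|^\alpha .
\]
At $t=0$ the bound is trivial.

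\textbf{Bound \eqref{eq:g-bound}.} Apply the pointwise bound with $t=\lambda_j$ and use $\alpha\sigma=r$:
\[
\|g\|_\sigma^\sigma\le \Bigl(\tfrac{L_\alpha(f)}{1+\alpha}\Bigr)^\sigma\sum_j|\lambda_j|^{\alpha\sigma}=\Bigl(\tfrac{L_\alpha(f)}{1+\alpha}\Bigr)^\sigma\|\lambda\|_r^{r}.
\]
Taking $\sigma$-th roots gives $\|\lambda\|_r^{r/\sigma}=\|\lambda\|_r^\alpha$, which is \eqref{eq:g-bound}.

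\textbf{Decomposition.} The identity $\Psi_{f,\lambda}=G+R$ holds by the definition \eqref{def Rf}, entry by entry. When $\lambda_j=\lambda_k$ the diagonal formula applies, since then $g_j+g_k=2h(\lambda_j)$. Linearity of $M\mapsto S_M$ then gives $S_{\Psi_{f,\lambda}}=S_G+S_R$.

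For the vanishing claim, suppose $\lambda_k=0$ (the case $\lambda_j=0$ is symmetric). If $x=\lambda_j\ne0$, then
\[
R(j,k)=\frac{f(x)}{x}-h(x)-h(0)=0.
\]
If $x=0$, then $R(j,k)=f'(0)-2h(0)=0$, because $|f'(0)|\le L_\alpha(f)\cdot0^\alpha=0$ as $\alpha>0$.

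\textbf{Bound \eqref{eq:leading-bound}.} This is the only step needing an idea. Write
\[
S_G(A)=DA+AD,\qquad D=\operatorname{diag}(g_j),
\]
which holds because $(DA)_{jk}=g_ja_{jk}$ and $(AD)_{jk}=a_{jk}g_k$. Since $g\in\ell^\sigma$, we have $D\in\Sp\sigma$ with $\|D\|_\sigma=\|g\|_\sigma$. The relation $1/p=1/\sigma+1$ from \eqref{sigma} allows the Hölder inequality for Schatten classes, valid for all positive exponents:
\[
\|DA\|_p\le\|D\|_\sigma\|A\|_1,\qquad \|AD\|_p\le\|A\|_1\|D\|_\sigma .
\]
The $p$-triangle inequality \cite[Proposition~4.6(i)]{FackKosaki} then gives
\[
\|S_G(A)\|_p^p\le 2\|g\|_\sigma^p\|A\|_1^p ,
\]
which is \eqref{eq:leading-bound}.

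The only point to watch is that Hölder's inequality is used in the quasi-Banach range. It still holds for singular values via the Horn/Fan inequality $s_{2n-1}(XY)\le s_n(X)s_n(Y)$, or directly from \cite{FackKosaki}.
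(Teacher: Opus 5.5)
Your proof is correct and follows the paper's argument essentially line by line: the integral bound for $h$ (your substitution $s\mapsto st$ is the paper's $\int_0^{|t|}s^\alpha\,ds$), the entrywise check of $\Psi_{f,\lambda}=G+R$ and of the vanishing of $R$, and then $S_G(A)=D_gA+AD_g$ followed by the Schatten H\"older inequality and the $p$-triangle inequality. Two minor precision points. First, $L_\alpha(f)$ is a supremum over $t\neq0$, so $f'(0)=0$ must come from the continuity of $f'$, as in the paper. Second, the route via $s_{2n-1}(XY)\le s_n(X)s_n(Y)$ only gives H\"older up to an extra factor $2^{1/p}$, so the exact constant $2^{1/p}$ in \eqref{eq:leading-bound} needs the constant-one H\"older inequality of \cite[Theorem~4.2(i)]{FackKosaki} (equivalently, Horn's multiplicative majorization).
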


\begin{proof}
Since $f(0)=0$, it follows that 
\[
 |f(t)|
 \leq
 L_\alpha(f)\int_0^{|t|}s^\alpha\,ds
 =
 \frac{L_\alpha(f)}{1+\alpha}|t|^{1+\alpha}.
\]
By the definition of $h$, this proves \eqref{eq:h-bound}.  Since $\alpha\sigma=r$, it follows that 
\[
 \|g\|_\sigma^\sigma
 \leq
 C_\alpha^\sigma L_\alpha(f)^\sigma
 \sum_{j\geq1}|\lambda_j|^{\alpha\sigma}
 =
 C_\alpha^\sigma L_\alpha(f)^\sigma\|\lambda\|_r^r,
\]
where $C_\alpha = \frac{1}{1+\alpha}$.
This proves \eqref{eq:g-bound}.

If $\lambda_j\ne\lambda_k$, then
\[
 \begin{aligned}
 G(j,k)+R(j,k)
 &=
 h(\lambda_j)+h(\lambda_k)
 +\frac{f(\lambda_j)-f(\lambda_k)}
        {\lambda_j-\lambda_k}
 -h(\lambda_j)-h(\lambda_k)\\
 &=
 \frac{f(\lambda_j)-f(\lambda_k)}
      {\lambda_j-\lambda_k}
 =\Psi_{f,\lambda}(j,k).
 \end{aligned}
\]
If $\lambda_j=\lambda_k$, then
\[
 \begin{aligned}
 G(j,k)+R(j,k)
 &=
 2h(\lambda_j)+f'(\lambda_j)-2h(\lambda_j)\\
 &=f'(\lambda_j)=\Psi_{f,\lambda}(j,k).
 \end{aligned}
\]
Thus, for every $j,k\geq1$,
\[
 \Psi_{f,\lambda}(j,k)=G(j,k)+R(j,k),
 \qquad
 S_{\Psi_{f,\lambda}}=S_G+S_R.
\]

The continuity of $f'$ and $L_\alpha(f)<\infty$ imply
$f'(0)=0$, while $f(0)=h(0)=0$.  Hence,
\[
 R_f(0,0)=f'(0)-2h(0)=0.
\]
If $y\ne0$, then
\[
 R_f(0,y)
 =\frac{f(0)-f(y)}{-y}-h(0)-h(y)
 =\frac{f(y)}y-h(y)=0,
\]
and similarly $R_f(x,0)=0$ for $x\ne0$.  
Therefore,
$R(j,k)=0$ whenever $\lambda_j=0$ or $\lambda_k=0$.

If $A=(a_{jk})$, then the Schur multiplier with symbol $G$ satisfies
\[
 S_G(A)
 =\bigl((g_j+g_k)a_{jk}\bigr)_{j,k\geq1}
 =D_gA+AD_g,
 \qquad D_ge_j=g_je_j.
\]
Since $D_g\in\Sp\sigma$ and $\|D_g\|_\sigma=\|g\|_\sigma$, the
Schatten H\"older inequality
\cite[Theorem~4.2(i)]{FackKosaki}, with
$1/p=1/\sigma+1$, yields
\[
 \|D_gA\|_p\leq\|g\|_\sigma\|A\|_1,
 \qquad
 \|AD_g\|_p\leq\|g\|_\sigma\|A\|_1.
\]
By 
the $p$-triangle inequality  of $\left\|\cdot\right\|_p$
(see \cite{Mc} or see \cite[Proposition~4.6(i)]{FackKosaki} for a more general result), we obtain 
\[
 \|S_G(A)\|_p
 \leq 2^{1/p}\|g\|_\sigma\|A\|_1.
\]
\end{proof}

Retain the notation of Lemma~\ref{lem:q1-leading-residual}.  For
$m\geq0$, let
\begin{align}\label{def Nm}
 I_m=\{j:2^{-m-1}<|\lambda_j|\leq2^{-m}\},
 \qquad P_m=\sum_{j\in I_m}e_{jj},
 \qquad N_m=|I_m|.
\end{align}
Since $\lambda\in\ell^r$, it follows that every $N_m$ is finite.  

We say that a matrix $A=(a_{jk})$ has \emph{finite coordinate
support} if $A=P_FAP_F$ for some finite set
$F\subset\mathbb N$, where $P_F:=\sum_{j\in F}e_{jj}$.
When $A$ has finite coordinate support, the sum
\begin{align}\label{def Q}
 Q:=\sum_{m\geq0}P_m,
\end{align}
is finite
and, for 
$\ell\in\mathbb Z$, we write (note that the following sum is finite because $A$ has finite coordinate support)
\[
 A^{(\ell)}
 :=
 \sum_{m\geq\max\{0,-\ell\}}P_mAP_{m+\ell}.
\]
In other words, $A^{(\ell)}$ is $\ell$-th diagonal of the matrix $A$, parallel to the main diagonal. 
\begingroup
 
Figure~\ref{fig:dyadic-blocks} illustrates both a single dyadic block
$B=P_mAP_n$ and the fixed-lag operator $A^{(\ell)}$ defined above.
\begin{figure}[htbp]
\makeatletter
\renewcommand{\@captionfont}{\normalfont\tiny}
\makeatother
\setlength{\abovecaptionskip}{-1pt}
\centering
\scalebox{0.7}{
\begin{minipage}[t]{0.46\textwidth}
\centering
\begin{tikzpicture}[x=0.52cm,y=-0.52cm,font=\scriptsize]
 \fill[gray!18] (0,3) rectangle (6,4);
 \fill[gray!18] (4,0) rectangle (5,6);
 \fill[gray!62] (4,3) rectangle (5,4);
 \foreach \i in {0,...,6}{
   \draw[black!55,thin] (0,\i)--(6,\i);
   \draw[black!55,thin] (\i,0)--(\i,6);
 }
 \draw[semithick] (4,3) rectangle (5,4);
 \node at (4.5,3.5) {$B$};
 \node at (4.5,-0.45) {$P_n\mathcal H$};
 \node[rotate=90] at (-0.55,3.5) {$P_m\mathcal H$};
\end{tikzpicture}
\par
\textup{(a) The block $B=P_mAP_n$.}
\end{minipage}
\hfill
\begin{minipage}[t]{0.46\textwidth}
\centering
\begin{tikzpicture}[x=0.52cm,y=-0.52cm,font=\scriptsize]
 \foreach \row in {0,...,5}{
   \foreach \col in {0,...,5}{
     \pgfmathtruncatemacro{\gap}{abs(\row-\col)}
     \ifnum\gap<2\relax
       \fill[gray!18] (\col,\row) rectangle ++(1,1);
     \fi
   }
 }
 \foreach \row/\col in {0/2,1/3,2/4,3/5}{
   \fill[gray!62] (\col,\row) rectangle ++(1,1);
   \draw[semithick] (\col,\row) rectangle ++(1,1);
 }
 \foreach \i in {0,...,6}{
   \draw[black!55,thin] (0,\i)--(6,\i);
   \draw[black!55,thin] (\i,0)--(\i,6);
 }
 \draw[very thick] (4,3) rectangle ++(1,1);
 \node at (4.5,3.5) {$B$};
 \node at (3,-0.45) {column shell $n$};
 \node[rotate=90] at (-0.55,3) {row shell $m$};
 \fill[gray!18] (0,6.45) rectangle ++(0.38,0.38);
 \draw (0,6.45) rectangle ++(0.38,0.38);
 \node[right] at (0.5,6.64) {$|m-n|\leq1$};
 \fill[gray!62] (0,7.15) rectangle ++(0.38,0.38);
 \draw (0,7.15) rectangle ++(0.38,0.38);
 \node[right] at (0.5,7.34)
  {$A^{(\ell)}:\ n-m=\ell\quad(\ell=2\text{ shown})$};
\end{tikzpicture}
\par
\textup{(b) Adjacent shells and $A^{(\ell)}$
($\ell=2$ shown).}
\end{minipage}}
\caption{The dyadic block decomposition associated with
$(P_m)_{m\geq0}$.  The light row and column in panel \textup{(a)}
show the actions of $P_m$ and $P_n$; their intersection is
$P_mAP_n$.  In panel \textup{(b)}, the light band consists of all
blocks with $|m-n|\leq1$, and the outlined cell $B$ is one such
block.  The dark blocks form $A^{(\ell)}$ for the fixed lag
$\ell=2$; translating the dark diagonal gives $A^{(\ell)}$ for
every $\ell\in\mathbb Z$.}
\label{fig:dyadic-blocks}
\end{figure}
\endgroup

Below, we collect some standard facts. 
\begin{lemma}
\label{lem:q1-lag-decomposition}
Let $A\in\Sp1$ have finite coordinate support and   $Q$ as in \eqref{def Q}. We have
\[
 S_R(A)=S_R(QAQ),
 \qquad
 \|QAQ\|_1\leq\|A\|_1.
\]
If $A=QAQ$, then only finitely many $A^{(\ell)}$ are nonzero, and
\[
 A=\sum_{\ell\in\mathbb Z}A^{(\ell)},
 \qquad
 S_R(A)=\sum_{\ell\in\mathbb Z}S_R(A^{(\ell)}),
 \qquad
 \|A^{(\ell)}\|_1\leq\|A\|_1.
\]
Moreover, if
$a_{m,\ell}:=\|P_mAP_{m+\ell}\|_1$, then
\[
 \|A^{(\ell)}\|_1
 =\sum_{m\geq\max\{0,-\ell\}}a_{m,\ell},
 \qquad
 \|S_R(A^{(\ell)})\|_p^p
 =\sum_{m\geq\max\{0,-\ell\}}
   \|S_R(P_mAP_{m+\ell})\|_p^p.
\]
\end{lemma}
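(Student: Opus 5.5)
The plan is to derive every assertion from two facts: $R(j,k)=0$ as soon as $\lambda_j=0$ or $\lambda_k=0$ (Lemma~\ref{lem:q1-leading-residual}), and the projections $P_m$ are mutually orthogonal. Since $\|\lambda\|_\infty\leq1$, the sets $I_m$, $m\geq0$, partition $\{j:\lambda_j\neq0\}$. Hence $Q$ (a finite sum once $A=P_FAP_F$) is the coordinate projection onto $\{j\in F:\lambda_j\ne0\}$. Entrywise, $(S_R(A))_{jk}=R(j,k)a_{jk}$ vanishes unless $\lambda_j\ne0\ne\lambda_k$, and for such indices the entries of $A$ and $QAQ$ agree. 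This gives $S_R(A)=S_R(QAQ)$, and $\|QAQ\|_1\leq\|Q\|_\infty\|A\|_1\|Q\|_\infty\le\|A\|_1$ follows from the ideal property.

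Assume now $A=QAQ$. Since $A$ has finite coordinate support, only finitely many $P_m$ satisfy $P_mA\neq0$ or $AP_m\ne0$. Therefore $A=\sum_{m,n}P_mAP_n$ is a finite sum, and regrouping by $\ell=n-m$ gives $A=\sum_\ell A^{(\ell)}$ with finitely many nonzero terms. Linearity of $S_R$ on finitely supported matrices then gives $S_R(A)=\sum_\ell S_R(A^{(\ell)})$. The bound $\|A^{(\ell)}\|_1\le\|A\|_1$ I will not get from the ideal property directly, since $A^{(\ell)}$ is not of the form $XAY$. Instead, I will realise the block-diagonal compression as an average: choose $\theta\in[0,2\pi)$, set $U_\theta=\sum_m e^{im\theta}P_m$ (finite, extended by the identity on $Q^\perp$, unitary), and check
\[
 A^{(\ell)}=\frac1{2\pi}\int_0^{2\pi}e^{i\ell\theta}\,U_\theta^*AU_\theta\,d\theta .
\]
The triangle inequality in the Banach space $\Sp1$ then yields $\|A^{(\ell)}\|_1\leq\|A\|_1$. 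A pinching argument would work equally well.

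For the last two identities, note that the blocks $B_m:=P_mAP_{m+\ell}$ for fixed $\ell$ have mutually orthogonal ranges ($P_m$) and mutually orthogonal initial spaces ($P_{m+\ell}$). Hence $|A^{(\ell)}|=\sum_m|B_m|$ is an orthogonal direct sum, and $\Tr|A^{(\ell)}|=\sum_m\|B_m\|_1$. More generally, the singular value sequence of $\sum_mB_m$ is the decreasing rearrangement of the union of those of the $B_m$, which gives $\|\sum_m B_m\|_p^p=\sum_m\|B_m\|_p^p$ for every $0<p<\infty$. Finally, $S_R$ preserves block supports: $S_R(P_mXP_n)=P_mS_R(X)P_n$, because Schur multipliers commute with left and right multiplication by diagonal projections. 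So $S_R(A^{(\ell)})=\sum_mS_R(B_m)$ is again a sum of blocks with orthogonal ranges and orthogonal initial spaces, and the $p$-th power additivity applies to it.

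The only step needing care is the direct-sum singular value identity. I will justify it by noting that $X=\sum_mB_m$ satisfies $X^*X=\sum_m B_m^*B_m$, since the cross terms $B_m^*B_{m'}=P_{m+\ell}A^*P_mP_{m'}AP_{m'+\ell}$ vanish for $m\ne m'$. The summands $B_m^*B_m$ act on the mutually orthogonal subspaces $P_{m+\ell}\mathcal H$, so the spectrum of $X^*X$ is the union of their spectra with multiplicity. Everything else is bookkeeping.
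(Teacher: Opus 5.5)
Your proposal is correct and follows the paper's route: the vanishing of $R$ on rows and columns with $\lambda_j=0$ gives $S_R(A)=S_R(QAQ)$, the finitely many blocks $P_mAP_n$ are regrouped by lag, and the orthogonal direct-sum structure of the blocks $P_mAP_{m+\ell}$ (which $S_R$ preserves) gives both additivity identities; the only difference is that you prove $\|A^{(\ell)}\|_1\le\|A\|_1$ by a circle average, where the paper cites contractivity of block projections. Two small corrections: with $U_\theta=\sum_m e^{im\theta}P_m$ your integral $\frac1{2\pi}\int_0^{2\pi}e^{i\ell\theta}U_\theta^*AU_\theta\,d\theta$ equals $A^{(-\ell)}$, so use $e^{-i\ell\theta}$ instead (harmless, since the bound is needed for every $\ell$); and strictly $Q$ projects onto all $j$ with $\lambda_j\ne0$, not only those in $F$, although this does not change $QAQ$.
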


\begin{proof}
The projection $Q$ is the orthogonal projection onto
$\overline{\operatorname{span}}\{e_j:\lambda_j\ne0\}$, and
\[
 Qe_j=\mathbf 1_{\{\lambda_j\ne0\}}e_j.
\]
For every $j,k\geq1$, the $(j,k)$ entry of $S_R(QAQ)$ is
\[
 R(j,k)\mathbf 1_{\{\lambda_j\ne0\}}
 \mathbf 1_{\{\lambda_k\ne0\}}a_{jk}
 =R(j,k)a_{jk},
\]
because $R(j,k)=0$ if $\lambda_j=0$ or $\lambda_k=0$ (see Lemma \ref{lem:q1-leading-residual}).  
Hence, 
\[
 S_R(A)=S_R(QAQ),
 \qquad
 \|QAQ\|_1\leq\|A\|_1.
\]
We may therefore assume $A=QAQ$.


The standard contractivity of
finite block projections (see \cite[Section 3]{CKS92}, 
\cite[(2.6)]{HuangSukochevYu} and \cite[Lemma~6.1]{DPS2016}) therefore
gives
\[
 \|A^{(\ell)}\|_1
 =
 \left\|
  \sum_{m\geq\max\{0,-\ell\}}P_mAP_{m+\ell}
 \right\|_1
 \leq\|A\|_1.
\]

Only finitely many $A^{(\ell)}$ are nonzero  
(because $A$ has finite
coordinate support), and
\[
 A=\sum_{\ell\in\mathbb Z}A^{(\ell)},
 \qquad
 S_R(A)=\sum_{\ell\in\mathbb Z}S_R(A^{(\ell)}).
\]
 For $m\geq\max\{0,-\ell\}$, set
\[
 a_{m,\ell}=\|P_mAP_{m+\ell}\|_1.
\]
Since $P_m$'s are mutually disjoint, it follows that  (note that $|A^{(\ell)}|^p = \displaystyle\sum_{m\ge \max\{0,-\ell\}}|P_m A P_{m+\ell}|^p$)
\begin{equation}\label{eq:lag-direct-sums}
 \|A^{(\ell)}\|_1
 =\sum_{m\geq\max\{0,-\ell\}}a_{m,\ell},
~\mbox{ }
 \|S_R(A^{(\ell)})\|_p^p
 =\sum_{m\geq\max\{0,-\ell\}}
    \|S_R(P_mAP_{m+\ell})\|_p^p.
\end{equation}
This completes the proof. 
\end{proof}

\begingroup
 According to whether the dyadic shells are adjacent
or separated, we will prove two  
 local estimates needed for Theorem~\ref{thm:q1-strong}.  For $m,n\geq0$, put
\begin{align}\label{def: d K beta}
 d_{m,n}=\min\{m,n\}\mbox{ and }
 K_{m,n}=\min\{N_m,N_n\}
.
\end{align}
The following estimate
will be particularly useful in the special case of adjacent-shell estimate, that is the case when    $|m-n|\leq1$ (see the proof of Lemma \ref{lem:q1-fixed-lag}).
\begin{lemma} 
\label{lem:q1-adjacent-block}
If $m,n\geq0$, then, for $p$ satisfying \eqref{eq:q1}, we have
\begin{equation}\label{eq:adjacent-block-estimate}
 \|S_R(B)\|_p
 \leq
 C_pL_\alpha(f)\,
 2^{-\alpha d_{m,n}}K_{m,n}^{1/\sigma}\|B\|_1,~\forall B\in P_m\Sp1P_n.
\end{equation}
\end{lemma}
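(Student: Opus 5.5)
The plan is to bound $S_R(B)$ in the weak trace class $\Sp{1,\infty}$, or in $\Sp1$, by $L_\alpha(f)2^{-\alpha d_{m,n}}\|B\|_1$, and then pass to $\Sp p$ using the fact that $S_R(B)$ has small rank. Write $d=d_{m,n}$ and $K=K_{m,n}$.

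\emph{Rank reduction.} Since $S_R(B)=P_mS_R(B)P_n$ and $P_m,P_n$ have ranks $N_m,N_n$, the operator $X=S_R(B)$ has rank at most $K$. For such $X$ and $0<p<1$,
\[
\|X\|_p^p\le\|X\|_{1,\infty}^p\sum_{j\le K}j^{-p}\le\frac{K^{1-p}}{1-p}\|X\|_{1,\infty}^p .
\]
This gives $\|X\|_p\le C_pK^{1/p-1}\|X\|_{1,\infty}=C_pK^{1/\sigma}\|X\|_{1,\infty}$, using \eqref{sigma}. Since $\|X\|_{1,\infty}\le\|X\|_1$, the same bound holds with $\|X\|_1$ in place of $\|X\|_{1,\infty}$. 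It therefore suffices to show
\[
\|S_R(B)\|_{1,\infty}\lesssim L_\alpha(f)\,2^{-\alpha d}\|B\|_1 ,
\]
where one $\Sp{1,\infty}$ quasi-triangle constant is absorbed at the end.

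\emph{Splitting $R$ and the $G$-part.} By Lemma~\ref{lem:q1-leading-residual}, on the block we have $S_R(B)=S_{\Psi_{f,\lambda}}(B)-D_gB-BD_g$. Every relevant index satisfies $|\lambda_j|\le 2^{-d}$, so \eqref{eq:h-bound} gives $|g_j|\le L_\alpha(f)2^{-\alpha d}$ on the supports of $P_m$ and $P_n$. Hence
\[
\|D_gB+BD_g\|_1\le 2L_\alpha(f)2^{-\alpha d}\|B\|_1 .
\]

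\emph{The divided-difference part.} This is the main step, and here I would invoke the weak $(1,1)$ estimate of \cite{CPSZ}. In Schur-multiplier form it says that for Lipschitz (here $C^1$) $\tilde f$ and any real sequence $\mu$,
\[
\|S_{\Psi_{\tilde f,\mu}}:\Sp1\to\Sp{1,\infty}\|\le C\|\tilde f'\|_\infty .
\]
Apply this to $\tilde f$ equal to $f$ on $[-2^{-d},2^{-d}]$ and extended constantly outside. Then $\|\tilde f'\|_\infty\le L_\alpha(f)2^{-\alpha d}$.

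The entries of $\Psi_{f,\lambda}$ and $\Psi_{\tilde f,\lambda}$ agree at all $(j,k)$ with $j\in I_m$, $k\in I_n$. Off the diagonal this holds because $f=\tilde f$ at those points. On the diagonal one needs $f'(\lambda_j)=\tilde f'(\lambda_j)$, which may be delicate at the endpoint $|\lambda_j|=2^{-d}$ where $\tilde f$ is not $C^1$. I would handle this by extending $f$ instead by a $C^1$ function on a slightly larger interval, for instance linearly with slope $f'(\pm2^{-d})$ and then cut off smoothly, so that the Lipschitz constant stays $\lesssim L_\alpha(f)2^{-\alpha d}$. Consequently $S_{\Psi_{f,\lambda}}(B)=S_{\Psi_{\tilde f,\lambda}}(B)$, and therefore
\[
\|S_{\Psi_{f,\lambda}}(B)\|_{1,\infty}\le C L_\alpha(f)2^{-\alpha d}\|B\|_1 .
\]

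\emph{Conclusion.} Combining the two parts by the quasi-triangle inequality in $\Sp{1,\infty}$ gives the required weak-type bound for $S_R(B)$. Applying the rank reduction then yields \eqref{eq:adjacent-block-estimate}. Alternatively, one can apply the rank bound separately to each of the three terms and combine them with the $p$-triangle inequality in $\Sp p$.

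The main obstacle is the correct transfer of \cite{CPSZ}. One must check that its constant depends only on $\|\tilde f'\|_\infty$ and not on the spectrum, and that it applies to discrete diagonal operators with the diagonal convention \eqref{def Psi}. Both should follow from the operator form $\|\tilde f(X)-\tilde f(Y)\|_{1,\infty}\lesssim\|\tilde f'\|_\infty\|X-Y\|_1$ via the standard double operator integral identity.
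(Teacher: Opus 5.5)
Your proposal is correct and follows the paper's proof essentially step for step: a CPSZ weak-type bound for a Lipschitz extension of $f|_{[-2^{-d},2^{-d}]}$, the trace-norm bound for $D_gB+BD_g$, the quasi-triangle inequality in $\Sp{1,\infty}$, and the rank reduction $\|X\|_p\le C_pK^{1/\sigma}\|X\|_{1,\infty}$. The only difference is at the entries with $\lambda_j=\lambda_k$, which you left as your ``main obstacle'': the paper applies \cite[Theorem~1.2]{CPSZ} with its zero-diagonal kernel, so any Lipschitz extension works and no $C^1$ extension is needed, and then adds $\sum_{\xi}f'(\xi)E_\xi BE_\xi$, whose $\Sp1$-norm is at most $L_\alpha(f)2^{-\alpha d}\|B\|_1$ by contractivity of the block-diagonal projection; this correction is what actually settles that point, because the commutator identity $T^{H_\lambda,H_\lambda}_{\tilde f^{[1]}}([H_\lambda,Z])=[\tilde f(H_\lambda),Z]$ alone never reaches those diagonal blocks.
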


\begin{proof}
\noindent\textbf{Step 1: }
Here, we prove that 
\begin{equation}\label{eq:adjacent-cpsz}
 \|S_{\Phi_{m,n}}(B)\|_{1,\infty}
 \leq
 C L_\alpha(f)2^{-\alpha d_{m,n}}\|B\|_1,~\forall B\in P_m\Sp1P_n,
\end{equation}
where $C>0$ is the absolute constant (independent of
$\alpha$, $m$ and $n$) in
\cite[Theorem~1.2]{CPSZ}.

The light band in Figure~\ref{fig:dyadic-blocks}\textup{(b)} consists
of the blocks covered by this lemma, and the outlined cell represents
a typical $B\in P_m\Sp1P_n$.  For $j\in I_m$ and $k\in I_n$,
\[
 |\lambda_j|\leq2^{-m}\leq2^{-d_{m,n}},
 \qquad
 |\lambda_k|\leq2^{-n}\leq2^{-d_{m,n}}.
\]
Thus, the spectral values of $H_\lambda$ associated with the nonzero
rows and columns of $B$ lie in
\[
 J_{m,n}:=[-2^{-d_{m,n}},2^{-d_{m,n}}],
\]
and
\[
 \operatorname{Lip}\bigl(f|_{J_{m,n}}\bigr)
 \leq L_\alpha(f)2^{-\alpha d_{m,n}}.
\]
Choose an extension
$\widetilde f_{m,n}:\mathbb R\to\mathbb R$ of
$f|_{J_{m,n}}$ with the same Lipschitz constant (see Kirszbraun's
theorem \cite[Section~2.10]{Federer}).  Following
\cite[Section~2.4]{CPSZ}, define
\[
 \varphi_{m,n}(x,y)
 =
 \begin{cases}
 \displaystyle
 \frac{\widetilde f_{m,n}(x)-\widetilde f_{m,n}(y)}{x-y},
 &x\ne y,\\[1.2ex]
 0,&x=y,
 \end{cases}
\]
and
\begin{align}\label{def Phi}
 \Phi_{m,n}(j,k)
 :=\varphi_{m,n}(\lambda_j,\lambda_k),
 \qquad j,k\geq1.
\end{align}
The claim \eqref{eq:adjacent-cpsz} follows from  \cite[Theorem~1.2]{CPSZ}. 
Indeed, 
let $E_{H_\lambda}$ be the spectral measure of
$$H_\lambda=\sum_{j\geq1}\lambda_je_{jj}. $$  The associated double
operator integral is
\[
 T_{\varphi_{m,n}}^{H_\lambda,H_\lambda}(X)
 :=\int_{\mathbb R}\!\int_{\mathbb R}
 \varphi_{m,n}(x,y)\,dE_{H_\lambda}(x)X\,dE_{H_\lambda}(y).
\]
Since $E_{H_\lambda}(\{\lambda_j\})e_{jk}
E_{H_\lambda}(\{\lambda_k\})=e_{jk}$, one has
\[
 T_{\varphi_{m,n}}^{H_\lambda,H_\lambda}(e_{jk})
 =\varphi_{m,n}(\lambda_j,\lambda_k)e_{jk}
 =\Phi_{m,n}(j,k)e_{jk}.
\]
Consequently, writing
$B=\sum_{j\in I_m,\,k\in I_n}b_{jk}e_{jk}$,
\begin{equation}\label{eq:doi-schur-identification}
 T_{\varphi_{m,n}}^{H_\lambda,H_\lambda}(B)
 =\sum_{j\in I_m,\,k\in I_n}
   \Phi_{m,n}(j,k)b_{jk}e_{jk}
 =S_{\Phi_{m,n}}(B).
\end{equation}
We have
\begin{equation*} 
 \|S_{\Phi_{m,n}}(B)\|_{1,\infty}
 \stackrel{\eqref{eq:doi-schur-identification}}{=}\|T_{\varphi_{m,n}}^{H_\lambda,H_\lambda}(B)\|_{1,\infty}
 \stackrel{\mbox{\tiny \cite[Theorem~1.2]{CPSZ}}}{\leq}
 C L_\alpha(f)2^{-\alpha d_{m,n}}\|B\|_1.
\end{equation*}
This proves \eqref{eq:adjacent-cpsz}.

\noindent\textbf{Step 2:}
Here, we prove that 
\begin{equation}\label{eq:adjacent-dd-weak}
 \|S_{\Psi_{f,\lambda}}(B)\|_{1,\infty}
 \leq
 2(C+1)L_\alpha(f)2^{-\alpha d_{m,n}}\|B\|_1,~\forall B\in P_m\Sp1P_n.
\end{equation}

For $j\in I_m$ and $k\in I_n$ with
$\lambda_j\ne\lambda_k$, the equality
$\widetilde f_{m,n}=f$ on $J_{m,n}$ implies
\[
 \Phi_{m,n}(j,k)=\Psi_{f,\lambda}(j,k).
\]
If $\lambda_j=\lambda_k=\xi$, then
$\Phi_{m,n}(j,k)=0$, whereas
$\Psi_{f,\lambda}(j,k)=f'(\xi)$.  For every eigenvalue $\xi$ (i.e., $\lambda_j$) of
$H_\lambda$, let
\[
 E_\xi:=\sum_{\{j:\lambda_j=\xi\}}e_{jj},
\]
and set
\[
 \Xi_{m,n}
 :=
 \{\xi:\lambda_j=\lambda_k=\xi
       \text{ for some }j\in I_m,\ k\in I_n\}.
\]
The set $\Xi_{m,n}$ is finite because $I_m$ and $I_n$ are finite (see \eqref{def Nm}).
Define
\begin{align}\label{def D}
 \mathcal D_{m,n}(B)
 :=
 \sum_{\xi\in\Xi_{m,n}} f'(\xi)E_\xi B E_\xi.
\end{align}
For $j\in I_m$ and $k\in I_n$,
\[
 (E_\xi B E_\xi)_{jk}
 =
 \mathbf 1_{\{\lambda_j=\xi\}}
 \mathbf 1_{\{\lambda_k=\xi\}}b_{jk},
\]
so
\begin{align*}
 \bigl[\mathcal D_{m,n}(B)\bigr]_{jk}
 =
 \sum_{\xi\in\Xi_{m,n}}
 f'(\xi)
 \mathbf 1_{\{\lambda_j=\xi\}}
 \mathbf 1_{\{\lambda_k=\xi\}}b_{jk}=
 \mathbf 1_{\{\lambda_j=\lambda_k\}}
 f'(\lambda_j)b_{jk}.
\end{align*}
It follows that
\begin{align*}
 \bigl[S_{\Phi_{m,n}}(B)+\mathcal D_{m,n}(B)\bigr]_{jk}
 &~=~
 \left(
  \Phi_{m,n}(j,k)
  +\mathbf 1_{\{\lambda_j=\lambda_k\}}f'(\lambda_j)
 \right)b_{jk}\\
 &\stackrel{\eqref{def Phi}}{=}
 \begin{cases}
  \displaystyle
  \frac{f(\lambda_j)-f(\lambda_k)}
       {\lambda_j-\lambda_k}\,b_{jk},
       &\lambda_j\ne\lambda_k,\\[1.2ex]
  f'(\lambda_j)b_{jk},&\lambda_j=\lambda_k,
 \end{cases}\\
 &~=~\Psi_{f,\lambda}(j,k)b_{jk}.
\end{align*}
Since $B=P_mBP_n$ (in other words, 
all entries outside $I_m\times I_n$ vanish), it follows that 
\begin{equation}\label{eq:adjacent-diagonal-decomposition}
 S_{\Psi_{f,\lambda}}(B)
 =S_{\Phi_{m,n}}(B)+\mathcal D_{m,n}(B).
\end{equation}
If $E_\xi B E_\xi\ne0$, then
$|\xi|\leq2^{-d_{m,n}}$ and therefore
\[
 |f'(\xi)|\leq L_\alpha(f)2^{-\alpha d_{m,n}}.
\]
Since the mapping 
$X\mapsto\sum_{\xi\in\Xi_{m,n}}E_\xi X E_\xi$ is contractive on
$\Sp1$,  see e.g. \cite[Section 3]{CKS92}, 
\cite[(2.6)]{HuangSukochevYu} and \cite[Lemma~6.1]{DPS2016}.
Thus, 
\begin{align}\label{Dmn bound}
\|\mathcal D_{m,n}(B)\|_{1,\infty  }\stackrel{\tiny \mbox{\cite[p. 217]{BS}}}{\le} 
 \|\mathcal D_{m,n}(B)\|_1
&\stackrel{\eqref{def D}}{\leq}
 L_\alpha(f)2^{-\alpha d_{m,n}}
 \left\|\sum_{\xi\in\Xi_{m,n}}E_\xi B E_\xi\right\|_1 \nonumber \\
 &~\leq~
 L_\alpha(f)2^{-\alpha d_{m,n}}\|B\|_1.
\end{align}
By
\eqref{eq:adjacent-diagonal-decomposition}   and the quasi-triangle inequality in
$\Sp{1,\infty}$ \cite[Section~2.2]{CPSZ}, we obtain
\begin{align*}
 \|S_{\Psi_{f,\lambda}}(B)\|_{1,\infty}
 &\leq
 2\bigl(
   \|S_{\Phi_{m,n}}(B)\|_{1,\infty}
   +\|\mathcal D_{m,n}(B)\|_{1,\infty}
 \bigr)\\
 &\stackrel{\eqref{eq:adjacent-cpsz},\eqref{Dmn bound}}{\leq}
 2(C+1)L_\alpha(f)2^{-\alpha d_{m,n}}\|B\|_1.
\end{align*}
This proves \eqref{eq:adjacent-dd-weak}.

\noindent\textbf{Step 3:} Here, we prove that 
\begin{equation}\label{eq:near-weak}
 \|S_R(B)\|_{1,\infty}
 \leq
 4(C+2)L_\alpha(f)2^{-\alpha d_{m,n}}\|B\|_1,~\forall B\in P_m\Sp1P_n.
\end{equation}

The definition $g_j=h(\lambda_j)$ and \eqref{eq:h-bound} give 
\[
 \begin{aligned}
 \|D_gP_m\|_\infty
 &=\sup_{j\in I_m}|h(\lambda_j)|
 \leq\frac{L_\alpha(f)}{1+\alpha}
       \sup_{j\in I_m}|\lambda_j|^\alpha
 \leq\frac{L_\alpha(f)}{1+\alpha}2^{-\alpha m},\\
 \|D_gP_n\|_\infty
 &=\sup_{k\in I_n}|h(\lambda_k)|
 \leq\frac{L_\alpha(f)}{1+\alpha}2^{-\alpha n}.
 \end{aligned}
\]
Since $B=P_mBP_n$, it follows that 
$D_gB=(D_gP_m)B$ and $BD_g=B(P_nD_g)$.  
Hence,
\[
 \begin{aligned}
 \|S_G(B)\|_1
 &=\|D_gB+BD_g\|_1\\
 &\leq
 \bigl(\|D_gP_m\|_\infty+\|D_gP_n\|_\infty\bigr)\|B\|_1\\
 &\leq
 \frac{2}{1+\alpha}
 L_\alpha(f)2^{-\alpha d_{m,n}}\|B\|_1.
 \end{aligned}
\]
By
$S_R(B)\stackrel{\tiny \mbox{Lemma \ref{lem:q1-leading-residual}}}{=}S_{\Psi_{f,\lambda}}(B)-S_G(B)$, the latter  inequality and 
\eqref{eq:adjacent-dd-weak}, we have 
\begin{align*}
 \|S_R(B)\|_{1,\infty}
 \leq
 2\bigl(
   \|S_{\Psi_{f,\lambda}}(B)\|_{1,\infty}
   +\|S_G(B)\|_{1,\infty}
 \bigr)\leq
 4(C+1+\frac{1}{1+\alpha})L_\alpha(f)2^{-\alpha d_{m,n}}\|B\|_1.
\end{align*}

\noindent\textbf{Step 4:} 
By the definition of
Schur   multiplication, we have 
\[
 S_R(B)=P_mS_R(B)P_n,
 \qquad
 \operatorname{rank}S_R(B)\leq K_{m,n}.
\]
If a matrix $X$ is such that $\operatorname{rank}X\leq K$ for some integer $K$, then
$s_j(X)\leq j^{-1}\|X\|_{1,\infty}$ for $1\leq j\leq K$.
In particular, 
\[
 \begin{aligned}
 \|X\|_p^p
 &\leq\|X\|_{1,\infty}^p\sum_{j=1}^Kj^{-p}
 \leq C_p^pK^{1-p}\|X\|_{1,\infty}^p,\\
 \|X\|_p
 &\leq C_pK^{1/p-1}\|X\|_{1,\infty}
 =C_pK^{1/\sigma}\|X\|_{1,\infty}.
 \end{aligned}
\]
Applying this inequality to $X=S_R(B)$ and using
\eqref{eq:near-weak}, 
we obtain
\eqref{eq:adjacent-block-estimate}.
\end{proof}

\begin{lemma}[Separated-shell estimate]
\label{lem:q1-separated-block}
If $m,n\geq0$ and $|m-n|\geq2$, then,  

\begin{equation}\label{eq:separated-block-estimate}
 \|S_R(B)\|_p
 \leq
 C_{\alpha,p}L_\alpha(f)\,
 2^{-\alpha d_{m,n}}2^{-\beta|m-n|}
 K_{m,n}^{1/\sigma}\|B\|_1,~\forall B\in P_m\Sp1P_n, 
\end{equation}
where $ \beta=\min\{1,\alpha\}$. 
\end{lemma}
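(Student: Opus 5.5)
We plan to show that on a separated block the residual symbol $R$ splits into a rapidly convergent series of \emph{product} symbols $a_jb_k$. Each such symbol acts as $B\mapsto D_aBD_b$. This gives a trace-class bound with the desired decay, and the rank of the block then converts it into an $\Sp p$ bound, as in Step~4 of Lemma~\ref{lem:q1-adjacent-block}.

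Since $R_f(x,y)=R_f(y,x)$, we have $S_R(B)^{\mathsf T}$-type symmetry. Replacing $B$ by its transposed block if necessary (the $\Sp1$ and $\Sp p$ norms are transpose invariant), we may therefore assume $n\geq m+2$, so that $d_{m,n}=m$. For $j\in I_m$ and $k\in I_n$, put $x=\lambda_j$, $y=\lambda_k$ and $t=y/x$. Then $2^{-m-1}<|x|\leq 2^{-m}$ and $|y|\leq 2^{-n}$, hence $|t|\leq 2^{-(n-m-1)}\leq \tfrac12$; in particular $x\ne y$. A direct computation from \eqref{def Rf} and \eqref{def h} gives
\[
 R_f(x,y)=\frac{t}{1-t}\bigl(h(x)-h(y)\bigr)-h(y)
 =\sum_{i\geq0}\Bigl(h(x)x^{-i-1}\,y^{i+1}-x^{-i-1}\,y^{i+1}h(y)\Bigr)-h(y).
\]
Each summand is of the form $a_jb_k$. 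Consequently
\[
 S_R(B)=\sum_{i\geq0}\Bigl(D_{u_i}BD_{v_i}-D_{w_i}BD_{z_i}\Bigr)-BD_g,
\]
where $D_{u_i}$, $D_{w_i}$ are diagonal on $I_m$ with entries $h(\lambda_j)\lambda_j^{-i-1}$ and $\lambda_j^{-i-1}$, $D_{v_i}$ is diagonal on $I_n$ with entries $\lambda_k^{i+1}$, and $D_{z_i}$ is diagonal on $I_n$ with entries $\lambda_k^{i+1}h(\lambda_k)$.

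Next we estimate each term in $\Sp1$ using $\|D_aBD_b\|_1\leq\|D_a\|_\infty\|B\|_1\|D_b\|_\infty$ together with \eqref{eq:h-bound}. For the first family,
\[
 \|D_{u_i}\|_\infty\|D_{v_i}\|_\infty\lesssim L_\alpha(f)\,2^{-\alpha m}\,2^{(m+1)(i+1)}2^{-n(i+1)}
 =L_\alpha(f)\,2^{-\alpha m}\,2^{-(n-m-1)(i+1)}.
\]
The second family obeys the better bound $L_\alpha(f)2^{-\alpha n}2^{-(n-m-1)(i+1)}$, and the last term obeys $\|BD_g\|_1\leq L_\alpha(f)2^{-\alpha n}\|B\|_1$. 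Since $n-m-1\geq1$, the geometric series converge absolutely in $\Sp1$ and sum to at most $C\,2^{-(n-m)}$. We also have $2^{-\alpha n}=2^{-\alpha m}2^{-\alpha(n-m)}$. Altogether,
\[
 \|S_R(B)\|_1\leq C_\alpha L_\alpha(f)\,2^{-\alpha d_{m,n}}2^{-\beta|m-n|}\|B\|_1,\qquad \beta=\min\{1,\alpha\}.
\]

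Finally, $S_R(B)=P_mS_R(B)P_n$ has rank at most $K_{m,n}$. For a matrix $X$ of rank at most $K$ and $0<p<1$, H\"older's inequality for the finite sequence of singular values gives $\|X\|_p\leq K^{1/p-1}\|X\|_1=K^{1/\sigma}\|X\|_1$ by \eqref{sigma}. Combining this with the trace-class bound yields \eqref{eq:separated-block-estimate}.

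The main point requiring care is the algebraic identity for $R_f$ and the bookkeeping of the powers of $2$ in the sup norms. In particular, one must check that the factor $2^{(m+1)(i+1)}$ coming from $|\lambda_j|^{-i-1}$ is beaten by $2^{-n(i+1)}$ uniformly in $i$, which is exactly where the hypothesis $|m-n|\geq2$ is used.
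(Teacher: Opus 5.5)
Your proposal is correct and follows essentially the same route as the paper. Both arguments expand $R_f$ as a geometric series in $y/x$ into product symbols acting as $B\mapsto D_aBD_b$, obtain a trace-class bound with decay $2^{-\alpha m}2^{-\beta|m-n|}$, convert it to the $p$-norm via the rank bound $\|X\|_p\le K^{1/\sigma}\|X\|_1$, and handle the case $m>n$ by transposition. The only cosmetic difference is that you split off the $-h(y)$ term as $-BD_g$, whereas the paper keeps it as the $s=0$ term of $\sum_{s\ge0}h(y)z^s$.
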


\begin{proof}
First suppose $n=m+\ell$ with $\ell\geq2$.  
Assume that 
\[
 2^{-m-1}<|x|\leq2^{-m},
 \qquad
 2^{-n-1}<|y|\leq2^{-n}.
\]
In particular, $x,y\ne0$.  Put $z=y/x$; then
$|z|\leq2^{1-\ell}\leq1/2$.  Recalling that
$h(t)=f(t)/t$ for $t\ne0$ (see \eqref{def h} above), we compute
\[
 \begin{aligned}
 R_f(x,y)
 &\stackrel{\eqref{def Rf}}{=}\frac{xh(x)-yh(y)}{x-y}-h(x)-h(y)\\
 &~=~\frac{h(x)-zh(y)}{1-z}-h(x)-h(y)\\
 &~=~\frac{zh(x)-h(y)}{1-z}\\
 &~=~\sum_{s\geq0}\bigl(h(x)z^{s+1}-h(y)z^s\bigr)\\
 &~=~\sum_{s\geq0}
 \left(
 h(x)x^{-s-1}y^{s+1}-x^{-s}h(y)y^s
 \right).
 \end{aligned}
\]
 The series is absolutely convergent because $|z|\leq1/2$. %
 Now, setting 
\[
\frac{1}{2} \ge \rho : =2^{1-\ell} = \frac{2^{-m-\ell}}{2^{-m-1}} \ge 
  \left|\frac{\lambda_k}{\lambda_j}\right|,
  \qquad(j\in I_m,\ k\in I_n),
 \]
 we have\footnote{For $s\geq0$, define the scalar families
 $u^{(s)}=(h(\lambda_j)\lambda_j^{-s-1})_{j\in I_m}$,
 $v^{(s)}=(\lambda_k^{s+1})_{k\in I_n}$,
 $\widetilde u^{(s)}=(\lambda_j^{-s})_{j\in I_m}$, and
 $\widetilde v^{(s)}=(h(\lambda_k)\lambda_k^s)_{k\in I_n}$.
 For a scalar family $a=(a_i)_{i\in I}$, write
 $D_a=\sum_{i\in I}a_ie_{ii}$ for its associated diagonal operator.
 Thus the term with index $s$ acts on $B$ as
 $D_{u^{(s)}}BD_{v^{(s)}}-
 D_{\widetilde u^{(s)}}BD_{\widetilde v^{(s)}}$.}
 \begin{eqnarray}
  \|S_R:P_m\Sp1P_n\to\Sp1\|
  &\leq&\sum_{s\geq0}
  \Bigl(
    \|D_{u^{(s)}}\|_\infty\|D_{v^{(s)}}\|_\infty
   +\|D_{\widetilde u^{(s)}}\|_\infty
    \|D_{\widetilde v^{(s)}}\|_\infty
  \Bigr)\notag\\
  &=&\sum_{s\geq0}\Biggl(
    \sup_{\substack{j\in I_m\\ k\in I_n}}
    |h(\lambda_j)|
    \left|\frac{\lambda_k}{\lambda_j}\right|^{s+1}
   +\sup_{\substack{j\in I_m\\ k\in I_n}}
    |h(\lambda_k)|
    \left|\frac{\lambda_k}{\lambda_j}\right|^s
  \Biggr)\notag\\
  &\stackrel{\eqref{eq:h-bound}}{\leq}&
  C_\alpha L_\alpha(f)
  \sum_{s\geq0}
  \bigl(2^{-\alpha m}\rho^{s+1}
       +2^{-\alpha(m+\ell)}\rho^s\bigr)\notag\\
  &=&C_\alpha L_\alpha(f)2^{-\alpha m}
  \sum_{s\geq0}
  \bigl(\rho^{s+1}+2^{-\alpha\ell}\rho^s\bigr)\notag\\
  &=&C_\alpha L_\alpha(f)2^{-\alpha m}
  \frac{\rho+2^{-\alpha\ell}}{1-\rho}\notag\\
  &=&C_\alpha L_\alpha(f)2^{-\alpha m}
  \frac{2^{1-\ell}+2^{-\alpha\ell}}{1-2^{1-\ell}}\notag\\
  &\stackrel{\ell\geq2}{\leq}&
  2C_\alpha L_\alpha(f)2^{-\alpha m}
  \bigl(2^{1-\ell}+2^{-\alpha\ell}\bigr)\notag\\
  &\stackrel{\beta=\min\{1,\alpha\}}{\leq}&
  C_\alpha L_\alpha(f)2^{-\alpha m}2^{-\beta\ell}.
  \label{eq:far-trace}
 \end{eqnarray}

Since
$S_R(B)=P_mS_R(B)P_n$, it follows that $S_R(B)$ has rank at most $K_{m,n}$.
For every operator $X$ of rank at most $K$, 
we have\footnote{Indeed, Jensen's inequality\cite{Jensen}, applied to the concave function
$t\mapsto t^p$, gives
\(
 \frac1K\sum_{j=1}^Ks_j(X)^p
 \leq
 \left(\frac1K\sum_{j=1}^Ks_j(X)\right)^p.
\)
}
\[
 \|X\|_p\leq K^{1/p-1}\|X\|_1
 =K^{1/\sigma}\|X\|_1.
\]
Inequality \eqref{eq:separated-block-estimate} when $n-m\geq2$ is proved by 
applying the latter inequality to $X=S_R(B)$ and using \eqref{eq:far-trace}.

The case when
 $m-n\geq2$ follows from the above case by taking   $B^{\mathsf T}$,  the transpose relative to
 the fixed basis. 
\end{proof}

Recall that $ \beta=\min\{1,\alpha\}$. 
Define
\begin{align}\label{def eta}
 \eta_\ell=
 \begin{cases}
  1,&|\ell|\leq1,\\
  2^{-\beta|\ell|},&|\ell|\geq2.
 \end{cases}
\end{align}
\endgroup

\begin{lemma}
\label{lem:q1-fixed-lag} Let $0< p<1$ satisfy \eqref{eq:q1} and $r,\alpha $ be as in Theorem \ref{thm:q1-strong}.
Let $A\in\Sp1$ have finite coordinate support and satisfy $A=QAQ$.
For every $\ell\in\mathbb Z$, we have
\begin{equation}\label{eq:one-lag}
 \|S_R(A^{(\ell)})\|_p
 \leq
 C_{\alpha,p}L_\alpha(f)\eta_\ell
 \|\lambda\|_r^\alpha\|A\|_1.
\end{equation}
\end{lemma}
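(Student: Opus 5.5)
We combine the block-diagonal structure of $A^{(\ell)}$ from Lemma~\ref{lem:q1-lag-decomposition} with the two local estimates, Lemma~\ref{lem:q1-adjacent-block} for $|\ell|\le1$ and Lemma~\ref{lem:q1-separated-block} for $|\ell|\ge2$, and then sum the $p$-th powers of the block estimates using H\"older's inequality in the index $m$.

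Fix $\ell$ and write $n=m+\ell$, so that $a_{m,\ell}=\|P_mAP_{m+\ell}\|_1$. By \eqref{eq:lag-direct-sums}, $\|S_R(A^{(\ell)})\|_p^p=\sum_m\|S_R(P_mAP_{m+\ell})\|_p^p$. The two local lemmas give, uniformly in both cases,
\[
 \|S_R(P_mAP_{m+\ell})\|_p\le C_{\alpha,p}L_\alpha(f)\,\eta_\ell\,2^{-\alpha d_{m,m+\ell}}K_{m,m+\ell}^{1/\sigma}a_{m,\ell}.
\]
Put $w_m:=2^{-\alpha d_{m,m+\ell}}K_{m,m+\ell}^{1/\sigma}$. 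Raising the block bound to the power $p$ and summing over $m$ yields
\[
 \|S_R(A^{(\ell)})\|_p^p\le \bigl(C_{\alpha,p}L_\alpha(f)\eta_\ell\bigr)^p\sum_m w_m^p a_{m,\ell}^p .
\]
Since $1/p=1+1/\sigma$, H\"older's inequality with the exponents $1/p$ and $\sigma/p$ (these are conjugate, because $p+p/\sigma=1$) gives
\[
 \sum_m w_m^pa_{m,\ell}^p\le\Bigl(\sum_m a_{m,\ell}\Bigr)^p\Bigl(\sum_m w_m^\sigma\Bigr)^{p/\sigma}.
\]
By Lemma~\ref{lem:q1-lag-decomposition}, $\sum_m a_{m,\ell}=\|A^{(\ell)}\|_1\le\|A\|_1$.

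The main step is to show that $\sum_m w_m^\sigma\lesssim_{\alpha}\|\lambda\|_r^r$, since then $(\sum_m w_m^\sigma)^{1/\sigma}\lesssim\|\lambda\|_r^{r/\sigma}=\|\lambda\|_r^\alpha$ because $\sigma=r/\alpha$. Since $\alpha\sigma=r$, we have $w_m^\sigma=2^{-r d_{m,m+\ell}}K_{m,m+\ell}$, with $d=\min\{m,m+\ell\}$ and $K=\min\{N_m,N_{m+\ell}\}$. We bound $K$ by the count $N_{m'}$ of the shell $m'\in\{m,m+\ell\}$ attaining the minimum index $d$, so that $w_m^\sigma\le 2^{-rd}N_d$.

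For each $j\in I_d$ we have $|\lambda_j|>2^{-d-1}$, hence $2^{-rd}N_d\le 2^r\sum_{j\in I_d}|\lambda_j|^r$. As $m$ ranges, the index $d=\min\{m,m+\ell\}$ is $m$ or $m+\ell$ and is injective in $m$ for fixed $\ell$, so each shell is counted at most once. Summing therefore gives $\sum_m w_m^\sigma\le 2^r\|\lambda\|_r^r$.

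One must check that this bound stays uniform in $\ell$. It does, because each shell $I_d$ is used once regardless of $\ell$, and the decay in $\ell$ is carried entirely by $\eta_\ell$.

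Collecting the estimates gives
\[
 \|S_R(A^{(\ell)})\|_p\le C_{\alpha,p}L_\alpha(f)\,\eta_\ell\,2^{r/\sigma}\|\lambda\|_r^\alpha\|A\|_1 ,
\]
and $2^{r/\sigma}=2^\alpha$ is absorbed into $C_{\alpha,p}$, which proves \eqref{eq:one-lag}.
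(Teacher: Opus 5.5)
Your proof is correct and follows the paper's argument essentially line by line. Both use the direct-sum identity \eqref{eq:lag-direct-sums}, the uniform block bound with weight $w_{m,\ell}=2^{-\alpha\min\{m,m+\ell\}}K_{m,m+\ell}^{1/\sigma}$ from Lemmas~\ref{lem:q1-adjacent-block} and~\ref{lem:q1-separated-block}, the scalar H\"older inequality for $1/p=1+1/\sigma$, and the shell count $\sum_m w_{m,\ell}^\sigma\le 2^r\|\lambda\|_r^r$, which your injectivity remark for $m\mapsto\min\{m,m+\ell\}$ simply makes explicit.
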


\begin{proof}
Fix $\ell\in\mathbb Z$.  Recall that following decompositions
\[
 A^{(\ell)}=
 \sum_{m\geq\max\{0,-\ell\}}P_mAP_{m+\ell},
 \qquad
 S_R(A^{(\ell)})=
 \sum_{m\geq\max\{0,-\ell\}}S_R(P_mAP_{m+\ell}),
\]
and \eqref{eq:lag-direct-sums} gives
\[
 \|S_R(A^{(\ell)})\|_p^p
 =\sum_{m\geq\max\{0,-\ell\}}
   \|S_R(P_mAP_{m+\ell})\|_p^p.
\]
For $m\geq\max\{0,-\ell\}$, set
\[
 w_{m,\ell}
:=
 2^{-\alpha\min\{m,m+\ell\}}
 K_{m,m+\ell}^{1/\sigma}.
\]
Since $\alpha\sigma=r$ (see   \eqref{sigma}), it follows that 
\begin{align}
 \sum_{m\geq\max\{0,-\ell\}} w_{m,\ell}^{\sigma}
 &~=~
 \sum_{m\geq\max\{0,-\ell\}}
 2^{-r\min\{m,m+\ell\}}K_{m,m+\ell}\notag\\
 &\stackrel{\eqref{def: d K beta}}{\leq}
 \sum_{s\geq0}2^{-rs}N_s
\stackrel{\eqref{def Nm}}{\leq} \sum_{j\ge 1} (2\lambda _j) ^{r}  = 2^r\|\lambda\|_r^r.
 \label{eq:weight-sum}
\end{align}
The scalar H\"older inequality associated with
$1/p=1+1/\sigma$ says
\[
\begin{aligned}
 \left(
 \sum_{m\geq\max\{0,-\ell\}}
 \bigl(w_{m,\ell}\|P_mAP_{m+\ell}\|_1\bigr)^p
 \right)^{1/p}
 &\leq
 \left(
 \sum_{m\geq\max\{0,-\ell\}}w_{m,\ell}^{\sigma}
 \right)^{1/\sigma}\\
 &\quad\times
 \sum_{m\geq\max\{0,-\ell\}}\|P_mAP_{m+\ell}\|_1.
\end{aligned}
\]
For $m\geq\max\{0,-\ell\}$, apply
Lemma~\ref{lem:q1-adjacent-block} when $|\ell|\leq1$ and
Lemma~\ref{lem:q1-separated-block} when $|\ell|\geq2$.  In either case,
\[
 \|S_R(P_mAP_{m+\ell})\|_p
 \leq C_{\alpha,p}L_\alpha(f)\eta_\ell
 w_{m,\ell}\|P_mAP_{m+\ell}\|_1.
\]
Consequently,
\begin{align*}
 \|S_R(A^{(\ell)})\|_p
 &=\left(
   \sum_{m\geq\max\{0,-\ell\}}
   \|S_R(P_mAP_{m+\ell})\|_p^p
   \right)^{1/p}\\
 &\leq C_{\alpha,p}L_\alpha(f)\eta_\ell
   \left(
   \sum_{m\geq\max\{0,-\ell\}}
   \bigl(w_{m,\ell}\|P_mAP_{m+\ell}\|_1\bigr)^p
   \right)^{1/p}\\
 &\leq C_{\alpha,p}L_\alpha(f)\eta_\ell
   \left(
   \sum_{m\geq\max\{0,-\ell\}}w_{m,\ell}^{\sigma}
   \right)^{1/\sigma}
   \sum_{m\geq\max\{0,-\ell\}}\|P_mAP_{m+\ell}\|_1\\
 &\leq 2^{r/\sigma} C_{\alpha,p}L_\alpha(f)\eta_\ell
   \|\lambda\|_r^\alpha\|A^{(\ell)}\|_1\\
 &\leq 2^{\alpha }C_{\alpha,p}L_\alpha(f)\eta_\ell
   \|\lambda\|_r^\alpha\|A\|_1.
\end{align*}
%
This proves \eqref{eq:one-lag}.
\end{proof}

Below, we obtain the estimate  of  $S_R(A)$ for general $A\in \Sp 1$, extending the result of Lemma \ref{lem:q1-fixed-lag}.
\begin{lemma}
\label{lem:q1-residual}
For every $A\in\Sp1$,
\begin{equation}\label{eq:residual-goal}
 \|S_R(A)\|_p
 \leq
 C_{\alpha,p}L_\alpha(f)\,
 \|\lambda\|_r^\alpha\|A\|_1.
\end{equation}
\end{lemma}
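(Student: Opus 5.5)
The plan is to first prove \eqref{eq:residual-goal} for $A\in\Sp1$ with finite coordinate support, and then pass to general $A$ by density.

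For $A$ with finite coordinate support, Lemma~\ref{lem:q1-lag-decomposition} gives $S_R(A)=S_R(QAQ)$ and $\|QAQ\|_1\leq\|A\|_1$, so we may assume $A=QAQ$. Then only finitely many lags are nonzero and $S_R(A)=\sum_{\ell\in\mathbb Z}S_R(A^{(\ell)})$. Applying the $p$-triangle inequality in $\Sp p$ together with Lemma~\ref{lem:q1-fixed-lag}, we get
\[
 \|S_R(A)\|_p^p
 \leq\sum_{\ell\in\mathbb Z}\|S_R(A^{(\ell)})\|_p^p
 \leq\bigl(C_{\alpha,p}L_\alpha(f)\|\lambda\|_r^\alpha\|A\|_1\bigr)^p
 \sum_{\ell\in\mathbb Z}\eta_\ell^p .
\]
By \eqref{def eta}, $\sum_\ell\eta_\ell^p\leq 3+2\sum_{\ell\geq2}2^{-\beta p\ell}<\infty$, and this depends only on $\alpha$ and $p$ because $\beta=\min\{1,\alpha\}>0$. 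This is the key point: the geometric decay in the lag supplied by Lemma~\ref{lem:q1-separated-block} is exactly what makes the $p$-th power sum converge despite $p<1$.

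For general $A\in\Sp1$, let $F_N=\{1,\dots,N\}$ and $A_N=P_{F_N}AP_{F_N}$. Then $A_N\to A$ in $\Sp1$ and $\|A_N\|_1\leq\|A\|_1$. The main obstacle is to make sense of $S_R(A)$ and to identify it as a limit, since $S_R$ is a priori defined only entrywise. I would argue as follows. By the finite-support case, $(S_R(A_N))$ is Cauchy in $\Sp p$, because $S_R(A_N)-S_R(A_M)=S_R(A_N-A_M)$ and $A_N-A_M$ has finite coordinate support with $\|A_N-A_M\|_1\to0$. Hence it converges in $\Sp p$ to some $X$ with $\|X\|_p\leq C\|A\|_1$. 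Convergence in $\Sp p$ implies convergence of matrix entries. Moreover, the $(j,k)$ entry of $S_R(A_N)$ equals $R(j,k)a_{jk}$ once $N\geq\max\{j,k\}$. So $X$ has entries $R(j,k)a_{jk}$, i.e.\ $X=S_R(A)$, which is the bounded extension in the sense of the paper's convention. This yields \eqref{eq:residual-goal} with the constant from the finite case.
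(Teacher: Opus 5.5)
Your proposal is correct and follows the paper's proof essentially step by step. You reduce to $A=QAQ$ with finite coordinate support, apply the $p$-triangle inequality over the lags together with Lemma~\ref{lem:q1-fixed-lag} and the summability of $\eta_\ell^p$, and then extend by density and the completeness of $\Sp p$. Your extra argument that identifies the $\Sp p$-limit entrywise with $S_R(A)$ usefully spells out a step the paper states in one line.
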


\begin{proof}
Suppose first that $A$ has finite coordinate support.
Lemma~\ref{lem:q1-lag-decomposition} allows us to replace $A$ by
$QAQ$ without changing $S_R(A)$ or increasing $\|A\|_1$. 
\begingroup 
Since $\beta>0$ and $p>0$, it follows from  the definition of $\eta_\ell$ (see \eqref{def eta}) that
\[
 \sum_{\ell\in\mathbb Z}\eta_\ell^p
 =3+2\sum_{\ell=2}^\infty2^{-\beta p\ell}
 =3+\frac{2^{1-2\beta p}}{1-2^{-\beta p}}
 <\infty.
\]
By the $p$-subadditivity of the Schatten quasi-norm
\cite[Proposition~4.6(i)]{FackKosaki} and 
 Lemma~\ref{lem:q1-fixed-lag}, we obtain
\begin{align*}
 \|S_R(A)\|_p^p
 &\leq\sum_{\ell\in\mathbb Z}
        \|S_R(A^{(\ell)})\|_p^p\\
 &\leq C_{\alpha,p}^pL_\alpha(f)^p
   \|\lambda\|_r^{\alpha p}\|A\|_1^p
   \sum_{\ell\in\mathbb Z}\eta_\ell^p\\
 &\leq \widetilde C_{\alpha,p}^{p}L_\alpha(f)^p
   \|\lambda\|_r^{\alpha p}\|A\|_1^p.
\end{align*}
\endgroup
This proves   \eqref{eq:residual-goal} for finite
coordinate matrices.
By the density of finite coordinate matrices  in $\Sp1$, and
the completeness of $\Sp p$,  the estimate extends to all $A\in\Sp1$.
\end{proof}

\begin{proof}[Proof of Theorem~\ref{thm:q1-strong}]
By Lemma~\ref{lem:q1-leading-residual},
$S_{\Psi_{f,\lambda}}=S_G+S_R$. By
\eqref{eq:g-bound} and \eqref{eq:leading-bound}, we have 
\[
 \|S_G(A)\|_p
 \leq
 C_{\alpha,p}L_\alpha(f)
 \|\lambda\|_r^\alpha\|A\|_1.
\]
Lemma~\ref{lem:q1-residual} gives the same estimate for $S_R(A)$.
The $p$-triangle inequality for $\Sp p$
\cite[Proposition~4.6(i)]{FackKosaki} now yields
\eqref{eq:q1-strong}.
\end{proof}



\begin{remark}\label{rem:q-greater-than-one}
The argument used above also applies when
$1<q<\infty$.  Let $p>0$ satisfy
\[
 \frac1p=\frac1q+\frac{\alpha}{r},
\]
and put
\[
 s=\frac r\alpha,
 \qquad
 \frac1p=\frac1q+\frac1s.
\]
Retain the notation
\[
 \beta=\min\{1,\alpha\},
 \qquad
 \eta_\ell=
 \begin{cases}
  1,&|\ell|\leq1,\\
  2^{-\beta|\ell|},&|\ell|\geq2.
 \end{cases}
\]

In the proof of Lemma~\ref{lem:q1-adjacent-block}, replace the
weak $(1,1)$ estimate by the strong $(q,q)$ estimate for
Lipschitz divided differences
\cite[Theorem~7]{PotapovSukochev2011}, which holds
for every $1<q<\infty$.
The treatment of the
diagonal correction is unchanged.  The separated-shell
expansion in   Lemma~\ref{lem:q1-separated-block} also acts
boundedly on $\Sp q$ and gives the same factor
$\eta_{m-n}$ (see \eqref{def eta}).  Hence, for $B\in P_m\Sp qP_n$,
\[
 \|S_R(B)\|_q
 \leq
 C_{\alpha,q}L_\alpha(f)
 2^{-\alpha d_{m,n}}\eta_{m-n}\|B\|_q.
\]
Since $p<q$ and
$\operatorname{rank}S_R(B)\leq K_{m,n}$, it follows that
\[
 \begin{aligned}
 \|S_R(B)\|_p
 \leq
 K_{m,n}^{1/p-1/q}\|S_R(B)\|_q\leq
 C_{\alpha,q}L_\alpha(f)
 2^{-\alpha d_{m,n}}\eta_{m-n}
 K_{m,n}^{1/s}\|B\|_q.
 \end{aligned}
\]

For a fixed $\ell\in\mathbb Z$, define
\[
 w_{m,\ell}
 :=
 2^{-\alpha\min\{m,m+\ell\}}
 K_{m,m+\ell}^{1/s},
 \qquad m\geq\max\{0,-\ell\}.
\]
The proof of Lemma~\ref{lem:q1-fixed-lag} now applies with the
scalar H\"older inequality
\[
 \left(\sum_m(w_{m,\ell}b_m)^p\right)^{1/p}
 \leq
 \left(\sum_mw_{m,\ell}^s\right)^{1/s}
 \left(\sum_mb_m^q\right)^{1/q}
\]
in place of its $q=1$ version.  The same argument
gives
\[
 \sum_{m\geq\max\{0,-\ell\}}w_{m,\ell}^s
 \leq
 2^r\|\lambda\|_r^r.
\]
Consequently,
\[
 \|S_R(A^{(\ell)})\|_p
 \leq
 C_{\alpha,q}L_\alpha(f)\eta_\ell
 \|\lambda\|_r^\alpha\|A^{(\ell)}\|_q.
\]
The proof
of Lemma~\ref{lem:q1-residual} applies without further change,
using the ordinary triangle inequality when $p\geq1$ and the
$p$-triangle inequality when $0<p<1$.  Thus,
\[
 \|S_R(A)\|_p
 \leq
 C_{\alpha,p,q}L_\alpha(f)
 \|\lambda\|_r^\alpha\|A\|_q.
\]

Finally, the estimate of the  term
$S_G(A)=D_gA+AD_g$ is identical, except that the Schatten
H\"older inequality is now used with
\(
 \frac1p=\frac1q+\frac1s.
\)
Combining the estimates for $S_G$ and $S_R$, we obtain
\[
 \|S_{\Psi_{f,\lambda}}(A)\|_p
 \leq
 C_{\alpha,p,q}L_\alpha(f)
 \|\lambda\|_r^\alpha\|A\|_q,
 \qquad A\in\Sp q.
\]
\end{remark}

\begin{corollary}
\label{cor:finite-positive}
Let $0<\alpha,r<\infty$ and $0<p,q<\infty$.  If
\begin{equation}\label{eq:finite-positive}
 \frac1p\leq
 \frac{\alpha}{r}+\min\!\left\{1,\frac1q\right\},
\end{equation}
then, for every $f\in C^1([-1,1])$ satisfying
$f(0)=0$ and $L_\alpha(f)<\infty$, and every real sequence
$\lambda\in\ell^r$ satisfying $\|\lambda\|_\infty\leq1$, the map
\[
 S_{\Psi_{f,\lambda}}:\Sp q\longrightarrow\Sp p
\]
is bounded, and
\[
 \|S_{\Psi_{f,\lambda}}:\Sp q\to\Sp p\|
 \leq
 C_{\alpha,p,q,r,f}\|\lambda\|_r^\alpha.
\]
\end{corollary}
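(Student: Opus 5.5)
We reduce to the endpoint (equality) cases, which are already available, and then use monotonicity in the auxiliary exponent $r$. Set
\[
 \frac1{p_0}:=\frac{\alpha}{r}+\min\Bigl\{1,\frac1q\Bigr\},
\]
so that the hypothesis \eqref{eq:finite-positive} reads $p_0\leq p$. We then proceed in three steps.

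First, we get boundedness into $\Sp{p_0}$ without changing $r$. For $0<q\leq1$ we use the inclusion $\Sp q\subseteq\Sp1$ with $\|A\|_1\leq\|A\|_q$, valid since $s_j$ decreases and $q\leq1$. Theorem~\ref{thm:q1-strong}, applied with $p_0=r/(r+\alpha)$, then gives
\[
 \|S_{\dd}(A)\|_{p_0}\lesssim L_\alpha(f)\|\lambda\|_r^\alpha\|A\|_q .
\]
For $1<q<\infty$ we have $1/p_0=\alpha/r+1/q$, and Theorem~\ref{thm:PST} (equivalently Remark~\ref{rem:q-greater-than-one}) gives
\[
 \|S_{\dd}:\Sp q\to\Sp{p_0}\|\lesssim\|\lambda\|_r^\alpha .
\]
In both cases $p_0<\infty$.

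Second, we pass from $p_0$ to the given $p\geq p_0$. This uses the contractive inclusion $\Sp{p_0}\subseteq\Sp p$, $\|X\|_p\leq\|X\|_{p_0}$, which holds because $s_1(X)\leq\|X\|_{p_0}$ and hence
\[
 \sum_j s_j^p=\sum_j s_j^{p_0}s_j^{p-p_0}\leq\|X\|_{p_0}^{p}.
\]
Composing with the first step yields
\[
 \|S_{\Psi_{f,\lambda}}:\Sp q\to\Sp p\|\leq C\|\lambda\|_r^\alpha ,
\]
with a constant depending on $\alpha,p_0,q$ (hence on $\alpha,r,q$) and on $f$ through $L_\alpha(f)$ or the PST constant.

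The only point needing care is that Theorem~\ref{thm:PST} and Theorem~\ref{thm:q1-strong} are stated with the exact relation between $p_0$, $q$, $r$, and we apply them with the same $r$, so no rescaling of $\lambda$ is needed. Shrinking $r$ instead, which would use $\ell^{r'}\subseteq\ell^r$ for $r'\geq r$, is unnecessary, because enlarging the target exponent is already handled by the Schatten inclusion.

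One edge case has to be checked: $0<q\leq1$ together with $p\geq1$. There $p_0<1\leq p$ and the same chain still works, since the inclusion argument above is valid for all exponents. Thus no step is a real obstacle; the content lies entirely in Theorem~\ref{thm:q1-strong} and Theorem~\ref{thm:PST}, and the corollary is the monotonicity bookkeeping above.
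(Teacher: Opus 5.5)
Your proof is correct and matches the paper's argument: for $1<q<\infty$ you apply Theorem~\ref{thm:PST} at the exact exponent $p_0$, for $0<q\leq1$ you use $\Sp q\subseteq\Sp1$ followed by Theorem~\ref{thm:q1-strong} at $p_0=r/(r+\alpha)$, and in both cases you finish with the contractive inclusion $\Sp{p_0}\subseteq\Sp p$. One harmless slip appears in your unused aside: the relevant inclusion is $\ell^r\subseteq\ell^{r'}$ for $r'\geq r$ (so one would be enlarging $r$, not shrinking it), but this has no bearing on your proof.
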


\begin{proof}
Suppose first that $q>1$, and define $p_0$ by
\[
 \frac1{p_0}=\frac{\alpha}{r}+\frac1q.
\]
By Theorem~\ref{thm:PST}, $S_{\Psi_{f,\lambda}}$ maps $\Sp q$ into $\Sp{p_0}$.
Condition \eqref{eq:finite-positive} gives  
$\Sp{p_0}\subseteq\Sp p$.

If $0<q\leq1$, put $p_-=r/(r+\alpha)$.  Then
\[S_{\Psi_{f,\lambda}}: 
 \Sp q\subseteq\Sp1
 \xrightarrow{\ \rm  Theorem~\ref{thm:q1-strong}\ }
 \Sp{p_-}
 \subseteq\Sp p,
\]
where  the last
inclusion follows from \eqref{eq:finite-positive} (i.e., $p\ge p_-$).
\end{proof}

\section{Sharpness of  the assumption in Theorem \ref{thm:main-classification}} 
\label{sec:sharp-classification}

In this section, we show that the condition \eqref{eq:full-condition} in the second part of  Theorem \ref{thm:main-classification} is sharp.

 For
$0<p<1$, Aleksandrov and Peller
\cite[Theorem~4.1]{AleksandrovPeller} (see also \cite[Lemma 2.2.3]{McS}) proved that the symbol
$(d_j\delta_{jk})_{j,k\geq1}$ defines a bounded Schur multiplier on
$\Sp p$ if and only if $d\in\ell^{p/(1-p)}$; in that case,
\[
 \bigl\|S_{(d_j\delta_{jk})_{j,k\geq1}}:\Sp p\to\Sp p\bigr\|
 =
 \|d\|_{\ell^{p/(1-p)}}.
\]

For $q=\infty$ and $1\leq p<\infty$, the necessity of
$1/p\leq\alpha/r$ is given in
\cite[Proposition~3.1(i)]{Arazy}.  Indeed, suppose that
$1/p>\alpha/r$, so that $\alpha p<r$, and choose a positive sequence
$\lambda\in\ell^r\setminus\ell^{\alpha p}$ with
$\|\lambda\|_\infty<1$.  For
\[
 f_0(t):=t|t|^\alpha
 \qquad(-1\leq t\leq1),
\]
we have $f_0'(t)=(\alpha+1)|t|^\alpha$.  Let $P_N$ be the
orthogonal projection onto the first $N$ basis vectors.  Then
$P_N\in C_\infty=\K$ and $\|P_N\|_\infty=1$, whereas
\[
 \bigl\|S_{\Psi_{f_0,\lambda}}(P_N)\bigr\|_p
 =(\alpha+1)
 \left(\sum_{j=1}^N\lambda_j^{\alpha p}\right)^{1/p}
 \longrightarrow\infty.
\]
Thus $S_{\Psi_{f_0,\lambda}}:C_\infty=\K\to\Sp p$ is unbounded.
This calculation does not use $p\geq1$, and hence proves the same
necessity for $q=\infty$ and every $0<p<\infty$.  We now treat the
remaining input indices $0<q<\infty$.

\begin{lemma} 
\label{lem:diagonal-realization}
Let $0<\alpha,r<\infty$, and let $d=(d_j)_{j\geq1}$ be a strictly
decreasing positive sequence such that
$d\in\ell^{r/\alpha}$ and $\|d\|_\infty\leq1$.  Set
\[
 \lambda_j:=\frac12 d_j^{1/\alpha},\qquad j\geq1.
\]
Then $\lambda\in\ell^r$, with $\|\lambda\|_\infty\leq1/2$, and
there is a real-valued $f\in C^1([-1,1])$, supported in $[0,1]$,
such that
\[
 f(0)=0,\qquad |f'(t)|\lesssim_\alpha |t|^\alpha,
 \qquad
 \Psi_{f,\lambda}(j,k)=d_j\delta_{jk}.
\]
\end{lemma}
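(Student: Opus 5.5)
The plan is to build $f$ as a sum of small $C^1$ bumps with disjoint supports, one centred at each $\lambda_j$. Each bump will vanish at its centre and have slope $d_j$ there. Then $f(\lambda_k)=0$ for every $k$, so all off-diagonal divided differences vanish, and $f'(\lambda_j)=d_j$ on the diagonal.

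First the easy facts. Since $\lambda_j^r=2^{-r}d_j^{r/\alpha}$, we get $\|\lambda\|_r^r=2^{-r}\|d\|_{r/\alpha}^{r/\alpha}<\infty$. Also $\|\lambda\|_\infty\le 1/2$ because $d_j\le 1$. Since $d$ is strictly decreasing and positive, $\lambda$ is strictly decreasing, positive, and tends to $0$; in particular the $\lambda_j$ are pairwise distinct.

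Fix $\psi\in C^1_c((-1,1))$ with $\psi(0)=0$ and $\psi'(0)=1$, for instance $\psi(s)=s(1-s^2)^2$. Choose radii
\[
 0<\delta_j\le \min\Bigl\{\tfrac{\lambda_j}{2},\ \tfrac{\lambda_{j-1}-\lambda_j}{2},\ \tfrac{\lambda_j-\lambda_{j+1}}{2}\Bigr\},
\]
dropping the middle term when $j=1$. With these radii the intervals $I_j=(\lambda_j-\delta_j,\lambda_j+\delta_j)$ are pairwise disjoint and lie in $[\lambda_j/2,\,3\lambda_j/2]\subset(0,1)$. Set
\[
 f(t)=\sum_{j\ge1} d_j\delta_j\,\psi\Bigl(\frac{t-\lambda_j}{\delta_j}\Bigr),
\]
so that on $I_j$ only the $j$-th term survives. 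Consequently $f(\lambda_k)=0$ and $f'(\lambda_k)=d_k\psi'(0)=d_k$ for every $k$. For $j\ne k$ this gives $\Psi_{f,\lambda}(j,k)=(0-0)/(\lambda_j-\lambda_k)=0$, while $\Psi_{f,\lambda}(j,j)=d_j$, which is exactly $\Psi_{f,\lambda}(j,k)=d_j\delta_{jk}$. The support of $f$ is contained in $\overline{\bigcup_j I_j}\subset[0,1]$, and $f(0)=0$.

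Next the growth bound. For $t\in I_j$ we have $|f'(t)|\le \|\psi'\|_\infty d_j=\|\psi'\|_\infty 2^\alpha\lambda_j^\alpha$. Since $|t|\ge\lambda_j/2$ there, this gives $|f'(t)|\le 4^\alpha\|\psi'\|_\infty|t|^\alpha$. Off $\bigcup_j I_j\cup\{0\}$ the function $f$ is locally zero, so $f'=0$ there.

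The only delicate point, and the step I expect to need care, is that $f\in C^1([-1,1])$ at the accumulation point $t=0$. Away from $0$ the sum is locally finite, so $f$ is $C^1$ there. Near $0$ we have $|f(t)|\le \|\psi'\|_\infty 4^\alpha\int_0^{|t|}s^\alpha\,ds\lesssim|t|^{1+\alpha}$, obtained by integrating the derivative bound on $(0,t]$, where $f$ is $C^1$ and $f(0^+)=0$. Hence $f$ is continuous at $0$ with $f'(0)=0$. Moreover $|f'(t)|\lesssim_\alpha|t|^\alpha\to0$ as $t\to 0$, so $f'$ is continuous at $0$. This completes the proof of the lemma, with the implicit constant depending only on $\alpha$ once $\psi$ is fixed.
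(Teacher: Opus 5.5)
Your proposal is correct and is essentially the paper's proof: both use the same sum of rescaled bumps $d_j\delta_j\psi((t-\lambda_j)/\delta_j)$ with disjoint supports, which gives $f(\lambda_k)=0$ and $f'(\lambda_k)=d_k$, and both get $|f'(t)|\lesssim_\alpha|t|^\alpha$ from $|t|\gtrsim\lambda_j$ on $I_j$. The only difference at $0$ is that the paper checks $|f(x)|/|x|\lesssim d_j\to0$ while you integrate $f'$, which is immaterial.

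One small nit. Your sample $\psi(s)=s(1-s^2)^2$ has support $[-1,1]$, not a compact subset of $(-1,1)$, so $f$ is not locally zero at the endpoints of the $I_j$; this is harmless because $\psi(\pm1)=\psi'(\pm1)=0$ gives $f'=0$ there. Likewise, the fact $f(0^+)=0$, which you assert without proof, follows at once from $f(\lambda_k)=0$ with $\lambda_k\to0$.
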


\begin{proof}
The definition of $\lambda$ gives
\[
 \sum_{j\geq1}|\lambda_j|^r
 =2^{-r}\sum_{j\geq1}d_j^{r/\alpha}<\infty,
 \qquad
 \|\lambda\|_\infty
 =\frac12\|d\|_\infty^{1/\alpha}\leq\frac12.
\]
Choose $\delta_j$ be such that 
$0<\delta_j<\lambda_j/4$ and 
intervals
$I_j=[\lambda_j-\delta_j,\lambda_j+\delta_j]$ are pairwise disjoint.  
Fix a real-valued function
$\phi\in C_c^\infty((-1,1))$ such that
$\phi(0)=0$ and $\phi'(0)=1$, and define, for $x\in[-1,1]$,
\[
 f(x):=\sum_{j\geq1}d_j\delta_j
 \phi\!\left(\frac{x-\lambda_j}{\delta_j}\right).
\]
The supports of
$ \phi\!\left(\frac{\cdot-\lambda_j}{\delta_j}\right)$ 
lie in $I_j = [\lambda_j -\delta_j,\lambda_j+\delta_j]$ (equivalently, $-1\le \frac{x-\lambda_j}{\delta_j} \le 1$), which
are
disjoint.
Since $\lambda_j\leq1/2$ and $\delta_j<\lambda_j/4$, all the
intervals $I_j$ lie in $(0,5/8)$.  
Thus $f$ is supported in
$[0,5/8]\subset[0,1]$.    If $x\in I_j$, then
$|x|\geq3\lambda_j/4$, and hence
\[
 |f'(x)|
 \leq\|\phi'\|_\infty d_j
 =2^\alpha\|\phi'\|_\infty\lambda_j^\alpha
 \leq
 \|\phi'\|_\infty
 \left(\frac83|x|\right)^\alpha.
\]
Moreover, $\delta_j<\lambda_j/4$ gives
\[
 \frac{|f(x)|}{|x|}
 \leq
 \|\phi\|_\infty
 \frac{d_j\delta_j}{3\lambda_j/4}
 \leq\frac{\|\phi\|_\infty}{3}d_j.
\]
For $x\ne0$ outside the intervals $I_j$, both $f(x)$ and
$f'(x)$ vanish.  Since
$d_j\to0$ as $j\to \infty $, these estimates show that
$f(x)/x\to0$ and $f'(x)\to0$ as $x\to0$. 
Consequently,
$f$ is differentiable at $0$, with $f'(0)=0$, and $f'$ is continuous
at $0$.  Hence, $f\in C^1([-1,1])$.
Finally, since 
$f(\lambda_j)=0$ and $f'(\lambda_j)=d_j$, it follows from the definition of $\Psi_{f,\lambda }$ (see \eqref{def Psi}) that 
\[
 \Psi_{f,\lambda}(j,k)=d_j\delta_{jk}.
\]
\end{proof}

\begin{theorem} \label{thm:sharp-failure}
Let $0<\alpha,r<\infty$, let $0<p<\infty$ and $0<q\leq\infty$, and use
the convention $1/\infty=0$.  If
\begin{equation}\label{eq:sharp-failure}
 \frac1p>
 \frac{\alpha}{r}+\min\!\left\{1,\frac1q\right\},
\end{equation}
then there exist $f\in C^1([-1,1])$ satisfying
$f(0)=0$ and $L_\alpha(f)<\infty$, and
$\lambda\in\ell^r$ satisfying $\|\lambda\|_\infty\leq1$, for which
\[
 S_{\Psi_{f,\lambda}}:\Sp q\longrightarrow\Sp p
\]
is unbounded.  In particular, if $0<q<1$ and
$1/p=\alpha/r+1/q$, then this multiplier need not be bounded.
\end{theorem}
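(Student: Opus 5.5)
We split the argument according to the value of $q$. The case $q=\infty$ has already been handled by the computation with $f_0(t)=t|t|^\alpha$ and the projections $P_N$ given just before Lemma~\ref{lem:diagonal-realization}. So we assume $0<q<\infty$. In this case we realize a \emph{diagonal} Schur multiplier through Lemma~\ref{lem:diagonal-realization}. We take a strictly decreasing positive $d\in\ell^{r/\alpha}$ with $\|d\|_\infty\leq1$ and form the associated $\lambda\in\ell^r$ and $f$. Then $S_{\Psi_{f,\lambda}}(A)=\operatorname{diag}(d_ja_{jj})$. Testing on diagonal matrices $A=\operatorname{diag}(x_j)$ gives
\[
 \|S_{\Psi_{f,\lambda}}:\Sp q\to\Sp p\|
 \geq
 \sup_{\|x\|_q\leq1}\|(d_jx_j)\|_p
 =\|M_d:\ell^q\to\ell^p\|,
\]
where $M_d$ denotes pointwise multiplication by $d$. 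So it suffices to choose $d$ so that $M_d:\ell^q\to\ell^p$ is unbounded.

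\textbf{Case $1<q<\infty$.} The hypothesis reads $1/p>\alpha/r+1/q$, and in particular $p<q$. Define $t$ by $1/t=1/p-1/q$. It is classical (the converse H\"older inequality) that $M_d:\ell^q\to\ell^p$ is bounded iff $d\in\ell^t$. Our hypothesis is exactly $1/t>\alpha/r$, i.e.\ $t<r/\alpha$, so $\ell^{r/\alpha}\not\subseteq\ell^t$. We pick a strictly decreasing positive $d\in\ell^{r/\alpha}\setminus\ell^t$ with $d_1\leq1$; for instance $d_j=c\,j^{-\alpha/r}(1+\log j)^{-\gamma}$ with $\gamma$ chosen suitably. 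To see unboundedness directly, we test with $x_j=d_j^{t/q}$ on the first $N$ coordinates. Then $\|(d_jx_j)_{j\le N}\|_p=(\sum_{j\le N}d_j^t)^{1/p}$ while $\|x\|_q=(\sum_{j\le N}d_j^t)^{1/q}$, so the ratio equals $(\sum_{j\le N}d_j^t)^{1/t}\to\infty$.

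\textbf{Case $0<q\leq1$.} The hypothesis is $1/p>1+\alpha/r$. Diagonal test matrices would only probe $M_d:\ell^q\to\ell^p$, and we are free to use $\Sp1\supseteq\Sp q$. The cleanest route is to note that $\Sp q$ contains all rank-one matrices with the same norm as in $\Sp1$. Boundedness from $\Sp q$ therefore still forces the diagonal estimate. Instead, I would test on the rank-one matrix $A=\xi\otimes\xi$ with $\xi=(x_j)$, for which $\|A\|_q=\|\xi\|_2^2$. Then $S_{\Psi_{f,\lambda}}(A)=\operatorname{diag}(d_j|x_j|^2)$, whose $\Sp p$-norm is $\|(d_jy_j)\|_p$ with $y_j=|x_j|^2$ and $\|y\|_1=\|\xi\|_2^2$. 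So boundedness would force $M_d:\ell^1\to\ell^p$ to be bounded. For $p<1$ this holds iff $d\in\ell^{t}$ with $1/t=1/p-1$. The argument is an explicit test of the same kind as before; alternatively one invokes Aleksandrov--Peller together with $\ell^1$ duality considerations. The hypothesis gives $t<r/\alpha$, and again we choose $d\in\ell^{r/\alpha}\setminus\ell^t$. To check unboundedness explicitly, we use $y$ supported on $N$ coordinates with $y_j=d_j^{t/(1-p)\cdot(\ldots)}$, or more simply $y_j=d_j^{t}/\sum_{k\le N}d_k^t$. For this choice,
\[
 \|(d_jy_j)\|_p^p=\frac{\sum_{j\le N}d_j^{p(1+t)}}{\left(\sum_{j\le N}d_j^t\right)^{p}}.
\]
Since $p(1+t)=t$, the ratio equals $(\sum_{j\le N}d_j^t)^{1-p}\to\infty$.

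The final assertion of the theorem follows by observing that for $0<q<1$ we have $1/q>1$. Hence $1/p=\alpha/r+1/q$ implies $1/p>\alpha/r+1$, which places us in the second case. The only step needing care is the rank-one reduction for $q\le1$, since it replaces the (false for $q<1$) diagonal $\ell^q$ test by an $\ell^1$ test. I expect this to be the main point to verify, together with the fact that $d$ can be taken strictly decreasing, as Lemma~\ref{lem:diagonal-realization} requires.
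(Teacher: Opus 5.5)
Your proposal is correct and is essentially the paper's proof. It uses the diagonal realization via Lemma~\ref{lem:diagonal-realization}, diagonal test matrices with $y_j=d_j^{t/q}$ for $1<q<\infty$, and for $0<q\le1$ the rank-one test $\xi\otimes\xi$ (whose $\Sp q$-norm is $\|\xi\|_2^2$ for every $q$) with $y_j=d_j^t/\sum_{k\le N}d_k^t$, exactly as the paper does. The only deviations are harmless: for $q=\infty$ you reuse the preliminary $f_0(t)=t|t|^\alpha$, $P_N$ computation (which the paper notes is valid for all $0<p<\infty$) rather than the diagonal realization with $y\equiv1$ on the first $N$ coordinates, and in the $q\le1$ case you display the $p$-th power of the ratio, which still tends to infinity.
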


\begin{proof}
Put  $$s=\frac{r}{\alpha}. $$

Suppose first that $0<q\leq1$ (by 
\eqref{eq:sharp-failure}, we have  $p<1$).  Define
\[
 t=\frac{p}{1-p},
 \qquad
 \frac1t=\frac1p-1>\frac1s  
\]
(so that $0<t<s$), and choose a strictly decreasing positive sequence
$d\in\ell^s\setminus\ell^t$ with $\|d\|_\infty\leq1$. By Lemma~\ref{lem:diagonal-realization}, there exist $f$ and $\lambda$ such that
\begin{equation}\label{eq:diagonal-symbol}
 \Psi_{f,\lambda}(j,k)=d_j\delta_{jk}.
\end{equation}
Let $x=(x_j)$ be finitely supported, with $x_j\geq0$ and
$\sum_jx_j=1$.  Put $u_j=\sqrt{x_j}$ and
$A_x=u\otimes u$, where
$(u\otimes u)\xi=\langle\xi,u\rangle u$.  The operator $A_x$ has rank $1$ and has
one singular value equal to $1$, so
\[
 \|A_x\|_q=1 .
\]
Since $(A_x)_{jk}=\sqrt{x_jx_k}$, formula
\eqref{eq:diagonal-symbol} gives
\[
\begin{aligned}
 \bigl(S_{\Psi_{f,\lambda}}(A_x)\bigr)_{jk}
 =
 \Psi_{f,\lambda}(j,k)(A_x)_{jk}=d_j\delta_{jk}\sqrt{x_jx_k}
 =d_jx_j\delta_{jk}.
\end{aligned}
\]
Consequently,
\[
 S_{\Psi_{f,\lambda}}(A_x)
 =\operatorname{diag}(d_jx_j)_{j\geq1},
\]
and hence
\begin{equation}\label{eq:rank-one-output}
 \bigl\|S_{\Psi_{f,\lambda}}(A_x)\bigr\|_p
 =
 \left(\sum_jd_j^px_j^p\right)^{1/p}.
\end{equation}
For $N\geq1$, apply \eqref{eq:rank-one-output} to the particular
vector $x=x^{(N)}$ defined by
\[
 D_N=\sum_{j=1}^N d_j^t,
 \qquad
 x_j^{(N)}=
 \begin{cases}
  d_j^t/D_N,&1\leq j\leq N,\\
  0,&j>N.
 \end{cases}
\]
In particular, $\sum_jx_j^{(N)}=1$. 
Since $p+tp=t$, it follows from  
\eqref{eq:rank-one-output} that 
\[
\begin{aligned}
 \bigl\|S_{\Psi_{f,\lambda}}(A_{x^{(N)}})\bigr\|_p
 =
 \left(
  \sum_{j=1}^N
  d_j^p\left(\frac{d_j^t}{D_N}\right)^p
 \right)^{1/p}=D_N^{-1}
   \left(\sum_{j=1}^N d_j^{p+tp}\right)^{\frac{1}{p}}
 =D_N^{\frac{1}{p}-1}
 =D_N^{\frac{1}{t}}.
\end{aligned}
\]
This tends to infinity as $N\to\infty$ because
$d\notin\ell^t$.  Thus the multiplier is unbounded.

Now suppose that $1<q\leq\infty$.  Condition
\eqref{eq:sharp-failure} implies $p<q$, so we may define $t>0$ by
\[
 \frac1t=\frac1p-\frac1q.
\]
Condition \eqref{eq:sharp-failure} gives $1/t>\frac{\alpha}{r}=1/s$.  Choose a
strictly decreasing positive sequence
$d\in\ell^s\setminus\ell^t$ with $\|d\|_\infty\leq1$.  
By
Lemma~\ref{lem:diagonal-realization}, there exist $f$ and $\lambda$
such that
\[
 \Psi_{f,\lambda}(j,k)=d_j\delta_{jk}.
\]
For every finitely supported scalar sequence $y=(y_j)_{j\geq1}$, put
\[
 B_y:=\operatorname{diag}(y_j)_{j\geq1}.
\]
Then $B_y$ has finite rank (also when $q=\infty$), and, with
$dy:=(d_jy_j)_{j\geq1}$,
\[
 S_{\Psi_{f,\lambda}}(B_y)=B_{dy},\qquad
 \|B_y\|_q=\|y\|_{\ell^q},\qquad
 \bigl\|S_{\Psi_{f,\lambda}}(B_y)\bigr\|_p
 =\|dy\|_{\ell^p}.
\]
If $q<\infty$, then, for $N\geq1$, put
\[
 D_N=\sum_{j=1}^N d_j^t,
 \qquad
 y_j^{(N)}=
 \begin{cases}
  d_j^{t/q}/D_N^{1/q},&1\leq j\leq N,\\
  0,&j>N.
 \end{cases}
\]
We have $\|B_{y^{(N)}}\|_q=\|y^{(N)}\|_{\ell^q}=1$ and,
since $p+pt/q=t$, it follows that
\[
 {\bigl\|S_{\Psi_{f,\lambda}}(B_{y^{(N)}})\bigr\|_p
 =\|dy^{(N)}\|_{\ell^p}}
 =
 D_N^{-1/q}
 \left(\sum_{j=1}^N d_j^t\right)^{1/p}
 =D_N^{1/t}\longrightarrow\infty
 \qquad(N\to\infty).
\]
If $q=\infty$, then $t=p$; taking $y_j^{(N)}=1$ for
$1\leq j\leq N$ and $y_j^{(N)}=0$ for $j>N$ gives
\[
 \begin{aligned}
  \|B_{y^{(N)}}\|_\infty
  &=\|y^{(N)}\|_{\ell^\infty}=1,\\
  \bigl\|S_{\Psi_{f,\lambda}}(B_{y^{(N)}})\bigr\|_p
  &=\|dy^{(N)}\|_{\ell^p}
    =\left(\sum_{j=1}^N d_j^p\right)^{1/p}
    \longrightarrow\infty
    \qquad(N\to\infty).
 \end{aligned}
\]
This proves the theorem.
\end{proof}

\begin{proof}[Proof of Theorem~\ref{thm:main-classification}]
Assume \eqref{eq:full-condition}.  If $q=\infty$, then Corollary
\ref{cor:qinf-positive-range} applies.  If $q<\infty$ and
$p<\infty$,  then 
we appeal to Corollary~\ref{cor:finite-positive}, which concludes the proof in this case.  Finally, if
$q<\infty$ and $p=\infty$, choose a sufficiently large finite
$p_0$ so that
\[
 \frac1{p_0}\leq
 \frac{\alpha}{r}+\min\!\left\{1,\frac1q\right\}.
\]
Again appealing to Corollary~\ref{cor:finite-positive}, we obtain that   $S_{\Psi_{f,\lambda }}$ maps
$\Sp q$ into $\Sp{p_0}\subseteq\K=\Sp\infty$.

Conversely, if \eqref{eq:full-condition} fails, then $p<\infty$ and
Theorem~\ref{thm:sharp-failure} supplies an unbounded multiplier.
\end{proof}

\begingroup

\section{Final remarks}
\label{sec:semifinite-doi}

Let $(\mathcal M,\tau)$ be a   semifinite von
Neumann algebra equipped with a normal faithful semifinite trace.
For $0<p<\infty$, we denote by $L^p(\mathcal M,\tau)$ the associated
noncommutative $L^p$-space and put
$L^\infty(\mathcal M,\tau)=\mathcal M$.  If $X$ is
$\tau$-measurable and $\mu_t(X)$ is its generalized singular-value
function, then
\[
 \|X\|_p=\left(\int_0^\infty\mu_t(X)^p\,dt\right)^{1/p},
\]
We refer to \cite{FackKosaki,LSZ,DPS} for this notation.

For $f\in C^1([-1,1])$, define
\[
 f^{[1]}(s,t)=
 \begin{cases}
 \dfrac{f(s)-f(t)}{s-t},&s\ne t,\\[1ex]
 f'(s),&s=t.
 \end{cases}
 \qquad
 L_\alpha(f):=\sup_{0<|t|\leq1}
 \frac{|f'(t)|}{|t|^\alpha}.
\]
If $A,B\in\mathcal M$ are self-adjoint contractions with spectral
measures $E_A,E_B$, define the double operator integral (DOI)
\begin{equation}\label{def:semifinite-doi}
 T_{f^{[1]}}^{A,B}(X)
 :=
 \int_{[-1,1]}\!\int_{[-1,1]}
 f^{[1]}(s,t)\,dE_A(s)X\,dE_B(t),
\end{equation}
initially on $L^2(\mathcal M,\tau)$\cite{PS09,PS10,ST,PSW}.  The following theorem extends
the main result of this paper to semifinite noncommutative
$L^p$-spaces.

\begin{theorem} \label{thm:semifinite-doi}
Let $0<\alpha<\infty$, let $f\in C^1([-1,1])$, and suppose
$L_\alpha(f)<\infty$.

\begin{enumerate}[label=\textup{(\roman*)}]
\item Let $0<r<\infty$, $1\leq q\leq\infty$, and define $p$ by
\begin{equation}\label{eq:doi-scaling}
 \frac1p=\frac1q+\frac\alpha r,
 \qquad \frac1\infty:=0.
\end{equation}
If $A=A^*$ and $B=B^*$ belong to
$L^r(\mathcal M,\tau)\cap\mathcal M$ and
$\|A\|_\infty,\|B\|_\infty\leq1$, then
$T_{f^{[1]}}^{A,B}$ has a bounded extension
\[
 T_{f^{[1]}}^{A,B}:L^q(\mathcal M,\tau)
 \longrightarrow L^p(\mathcal M,\tau),
\]
and
\begin{equation}\label{eq:semifinite-doi-main}
 \|T_{f^{[1]}}^{A,B}(X)\|_p
 \leq C_{\alpha,p,q}L_\alpha(f)
 \bigl(\|A\|_r^r+\|B\|_r^r\bigr)^{\alpha/r}\|X\|_q.
\end{equation}
For $q=\infty$, the extension is understood as the canonical
$L^p$-limit of its finite spectral truncations.

\item The following limiting case is included for comparison.  Let
$r=\infty$ and $1<q<\infty$.  With the convention
$\alpha/\infty:=0$, \eqref{eq:doi-scaling} yields $p=q$.  For arbitrary self-adjoint
contractions $A,B\in\mathcal M$, we have 
\begin{equation}\label{eq:semifinite-rinfty}
 \|T_{f^{[1]}}^{A,B}(X)\|_q
 \leq C_qL_\alpha(f)\|X\|_q.
\end{equation}
\end{enumerate}
\end{theorem}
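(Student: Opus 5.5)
The plan is to transplant the matrix argument of Sections~\ref{s 2}--\ref{s 3} to the semifinite setting, with spectral projections replacing coordinate projections. First reduce to $A=B$. Put $\widehat{\mathcal M}=M_2(\mathcal M)$ with trace $\tau\otimes\Tr$, $H=A\oplus B$ and $\widehat X=\begin{pmatrix}0&X\\0&0\end{pmatrix}$. Then $T^{H,H}_{f^{[1]}}(\widehat X)$ has $T^{A,B}_{f^{[1]}}(X)$ in its off-diagonal corner, and $\|H\|_r^r=\|A\|_r^r+\|B\|_r^r$. This explains the shape of the constant in \eqref{eq:semifinite-doi-main}.

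With $A=B=H$, split $f^{[1]}=G+R$ with $G(s,t)=h(s)+h(t)$, exactly as in Lemma~\ref{lem:q1-leading-residual}, where $h(t)=f(t)/t$. The leading term is
\[
T_G(X)=h(H)X+Xh(H).
\]
Since $|h(t)|\lesssim L_\alpha(f)|t|^\alpha$, we get $\|h(H)\|_{r/\alpha}\lesssim L_\alpha(f)\|H\|_r^\alpha$. The noncommutative H\"older inequality with $1/p=1/q+\alpha/r$ then bounds $T_G$; the $p$-triangle inequality covers $p<1$.

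For the residual, use the dyadic spectral projections $P_m=E_H(\{2^{-m-1}<|t|\le 2^{-m}\})$. Because $R(s,t)=0$ when $s=0$ or $t=0$, we may compress $X$ to $QXQ$ with $Q=\sum_m P_m$. Decompose into lags $X^{(\ell)}=\sum_m P_mXP_{m+\ell}$; block-diagonal compression is contractive on $L^q$. The block estimates go as follows.
\begin{itemize}
\item Adjacent blocks ($|\ell|\le1$): for $q=1$ use the weak $(1,1)$ bound of \cite[Theorem~1.2]{CPSZ}, for $1<q<\infty$ use \cite[Theorem~7]{PotapovSukochev2011}, both applied to a Lipschitz extension of $f|_{[-2^{-d},2^{-d}]}$. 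Both results hold in semifinite algebras. The diagonal correction $\mathcal D$ becomes $\int f'(\xi)\,dE_H(\xi)X\,dE_H(\xi)$ restricted to atoms. This is a conditional expectation onto the commutant-type corner, hence contractive.
\item Separated blocks ($|\ell|\ge2$): expand in the geometric series from Lemma~\ref{lem:q1-separated-block}. Each term is a product $u(H)P_mXP_nv(H)$, so the operator bound passes verbatim, giving the factor $2^{-\alpha d}2^{-\beta|\ell|}$ on $L^q$.
\end{itemize}

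Next pass from $L^q$ (or $L^{1,\infty}$) to $L^p$ on the block $P_m(\cdot)P_n$. The rank $K_{m,n}$ is replaced by the trace $\tau(P_m)\wedge\tau(P_n)$. For an operator $Y$ supported by a projection of trace $K$, H\"older gives $\|Y\|_p\le K^{1/p-1/q}\|Y\|_q$. In the weak case, integrating $\mu_t(Y)\le t^{-1}\|Y\|_{1,\infty}$ over $(0,K)$ gives $\|Y\|_p\le C_pK^{1/p-1}\|Y\|_{1,\infty}$. Then the weights satisfy
\[
\sum_m 2^{-rm}\tau(P_m)\le 2^r\|H\|_r^r,
\]
and the scalar H\"older and $\ell$-summation steps of Lemmas~\ref{lem:q1-fixed-lag} and~\ref{lem:q1-residual} go through unchanged.

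Two points are specific to the semifinite setting.
\begin{itemize}
\item \textbf{Atoms and disjointness.} In an atomless algebra the diagonal correction is genuinely different: $\{s=t\}$ may carry no atoms. One must check that the DOI with the values on the diagonal changed is handled correctly. Here $T$ is defined by $f^{[1]}$ with value $f'(s)$ on the diagonal, which is continuous. So I would apply the CPSZ bound directly to the full $f^{[1]}$ of the Lipschitz extension, and use Kirszbraun-type $C^1$ extension so that $f'$ agrees on the interval. The separate correction is then needed only where it is harmless.
\item \textbf{The case $q=\infty$.} Here I would argue by duality, as in Proposition~\ref{prop:qinf-banach}, when $p>1$. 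The adjoint of $T^{A,B}$ is $T^{B,A}$ with the transposed symbol, and this requires the $L^1\to L^{p'}$-type bound with $p'<\infty$ from the $q$-range already proved. For $p\le1$, $q=\infty$, I would instead use the decomposition $f^{[1]}=G+R$ together with the block argument with $q=\infty$. The adjacent bound on $L^\infty$ fails, so there I would use the Hilbert--Schmidt row/column trick of Theorem~\ref{thm:qinf-quasi}: bound blocks in $L^2$ via $|f^{[1]}|\le L_\alpha(f)2^{-\alpha d}$, then convert to $L^p$ using the trace of the support. I expect this endpoint, together with defining the extension as the limit of spectral truncations, to be the main obstacle.
\end{itemize}

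Part (ii) is simply \cite[Theorem~7]{PotapovSukochev2011} applied to a Lipschitz extension of $f$. On $[-1,1]$ this extension has Lipschitz constant at most $L_\alpha(f)$, since $|f'|\le L_\alpha(f)|t|^\alpha\le L_\alpha(f)$.
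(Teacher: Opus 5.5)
Your proposal is correct. For $1\le q<\infty$ it is the paper's route made explicit: the split $f^{[1]}=G+R$, dyadic spectral shells, \cite[Theorem~1.2]{CPSZ} resp.\ \cite[Theorem~7]{PotapovSukochev2011} on adjacent rectangles, the geometric series on separated rectangles, and $\tau(P_m)\wedge\tau(P_n)$ in place of $K_{m,n}$. The paper only sketches this. Your reduction to $A=B$ via $H=A\oplus B$ in $M_2(\mathcal M)$ is a clean way to obtain the factor $(\|A\|_r^r+\|B\|_r^r)^{\alpha/r}$, which the sketch leaves implicit.

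You genuinely go beyond the paper at $q=\infty$. The paper's sketch names adjacent-shell tools only for $q<\infty$. An $L^\infty$ adjacent bound is not available for merely $C^1$ functions (cf.\ Remark~\ref{rem:semifinite-scope}). The rank-one rows and columns used in Theorem~\ref{thm:qinf-quasi} have no analogue in a diffuse algebra. Your substitutes close this case.
\begin{itemize}
\item For $p>1$, use duality with the already proved $L^{p'}\to L^1$ bound, together with the symmetry of $f^{[1]}$. Note that $L^{p'}\to L^1$ is the direction you need, not $L^1\to L^{p'}$.
\item For $p\le1$, put $K=\tau(P_m)\wedge\tau(P_n)$. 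On adjacent rectangles, $|R|\le 3L_\alpha(f)2^{-\alpha d_{m,n}}$ gives an $L^2$ bound. It converts to $L^p$ via $\|P_mXP_n\|_2\le K^{1/2}\|X\|_\infty$ and $\|Y\|_p\le K^{1/p-1/2}\|Y\|_2$. On separated rectangles, $\|Y\|_p\le K^{1/p}\|Y\|_\infty$ does the same job.
\end{itemize}
Since $\alpha p=r$, both estimates produce exactly the weights $2^{-rd}K$ summed in \eqref{eq:weight-sum}. For $p\le1$, the tails of these sums also show that $T_R(Q_NXQ_N)$, with $Q_N=\sum_{m\le N}P_m$, is Cauchy in $L^p$. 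This settles the extension issue you flagged in that range.

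Two minor corrections.
\begin{itemize}
\item In a general semifinite algebra, the diagonal part of the DOI is not confined to atoms of $H$. It equals $f'(H)\,\mathcal E_\Delta(X)$, where $\mathcal E_\Delta$ is the strong $L^2$-limit, over refining partitions, of the pinchings $X\mapsto\sum_iE_H(\Delta_i)XE_H(\Delta_i)$. Hence $\mathcal E_\Delta$ is contractive on every $L^q$ by the Fatou property, and it is what replaces $\sum_\xi E_\xi BE_\xi$ in Step~2. This correction is still needed even if you work with the full symbol, because \cite{CPSZ} and \cite{PotapovSukochev2011} concern the zero-diagonal symbol. You cannot apply them ``directly to the full $f^{[1]}$''.
\item Kirszbraun's theorem yields Lipschitz extensions, not $C^1$ ones. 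Continuing $f$ affinely past the endpoints of $J_{m,n}$ gives the $C^1$ extension you want.
\end{itemize}
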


Part~\textup{(ii)} is not a new result.  The derivative condition
implies that $f$ is Lipschitz on $[-1,1]$, with Lipschitz constant at
most $L_\alpha(f)$.  After extending $f$ to a Lipschitz function on
$\mathbb R$, the zero-diagonal part of
\eqref{eq:semifinite-rinfty} is precisely the strong $(q,q)$ DOI
estimate in 
\cite[Theorem~7]{PotapovSukochev2011}.

Part~\textup{(i)} follows by carrying the  argument for
the Schur multiplier in Section~\ref{s 3} over to semifinite double operator
integrals: matrix blocks and their ranks are replaced by spectral
rectangles and their traces.  On adjacent shells one uses
\cite[Theorem~1.2]{CPSZ} for $q=1$ and
\cite[Theorem~7]{PotapovSukochev2011} for $1<q<\infty$.
The remaining
block estimates and summations are unchanged, so we omit the
details.

\begin{remark}\label{rem:semifinite-scope}
The theorem covers every $1\leq q\leq\infty$ when
$0<r<\infty$.  At the limiting value $r=\infty$,  we have  $p=q$.  
The two cases
$(p,q)=(1,1)$ and $(p,q)=(\infty,\infty)$ are not included: under
the present $C^1$ hypothesis, strong boundedness at these two
endpoints fails in general (see e.g. 
\cite[Lemma~6.1 and Theorem~6.7]{ShulmanTodorovTurowska}, \cite{Kato,Davies,Far},  \cite[Introduction]{PotapovSukochev2011}).
If one strengthens the
assumption to $f'\in\operatorname{Lip}_\beta([-1,1])$ for some
$\beta>0$, then both endpoints follow from
\cite[Theorem~5]{BirmanSolomyak1965}.


\end{remark}

\begin{remark}
The restriction $q\geq1$ is essential in the general semifinite
setting, which is different from Theorem \ref{thm:main-classification}.  For $0<q<1$, one has the inclusion
$\mathcal S^q\subset\mathcal S^1$, which fails for atomless
semifinite noncommutative $L^q$-spaces.
Thus the result for
$0<q<1$ is specific to Schatten ideals and does not extend to arbitrary
semifinite von Neumann algebras.
Indeed, suppose that the estimate
\begin{equation}\label{eq:hypothetical-q-less-one}
 \bigl\|T_{f^{[1]}}^{A,B}(X)\bigr\|_p
 \leq
 C_{\alpha,p,q}L_\alpha(f)
 \bigl(\|A\|_r^r+\|B\|_r^r\bigr)^{\alpha/r}
 \|X\|_q
\end{equation}
held for all semifinite von Neumann algebras.  Let
 $\mathcal M=L^\infty(0,1)$
with the Lebesgue trace, and choose a measurable set
$E\subset(0,1)$ with measure $\varepsilon\in(0,1)$.  Put
\[
 e=\mathbf 1_E,\qquad
 A=B=ae,\qquad X=e, \qquad  f(t)=\frac{t|t|^\alpha}{\alpha+1}, 
\]
where $0<a\leq1$. 
Then $f'(t)=|t|^\alpha$, so that $L_\alpha(f)=1$.  Observe that 
\(
 T_{f^{[1]}}^{A,A}(e)
 =f^{[1]}(a,a)e
 =f'(a)e
 =a^\alpha e.
\)
Consequently,
\begin{equation}\label{eq:diffuse-left-hand-side}
 \bigl\|T_{f^{[1]}}^{A,A}(e)\bigr\|_p
 =a^\alpha\|e\|_p
 =a^\alpha\varepsilon^{1/p}.
\end{equation}
Moreover,  since
\( 
 \|A\|_r^r=\tau(|A|^r)=a^r\varepsilon=
 \|B\|_r^r,
\)
it follows that 
\begin{equation}\label{eq:diffuse-spectral-factor}
 \bigl(\|A\|_r^r+\|B\|_r^r\bigr)^{\alpha/r}
 =
 (2a^r\varepsilon)^{\alpha/r}
 =
 2^{\alpha/r}a^\alpha\varepsilon^{\alpha/r}.
\end{equation}
We also have
\begin{equation}\label{eq:diffuse-input-norm}
 \|X\|_q=\|e\|_q
 =\tau(e)^{1/q}
 =\varepsilon^{1/q}.
\end{equation}
Substituting
\eqref{eq:diffuse-left-hand-side}--\eqref{eq:diffuse-input-norm}
into \eqref{eq:hypothetical-q-less-one}, we obtain
\[
 a^\alpha\varepsilon^{1/p}
 \leq
 C_{\alpha,p,q}
 2^{\alpha/r}a^\alpha
 \varepsilon^{\alpha/r+1/q}.
\]
If 
\(
 \frac1p=1+\frac{\alpha}{r},
\)
then we have 
\[
 \varepsilon^{\,1-1/q}
 \leq C_{\alpha,p,q}2^{\alpha/r}.
\]
For $0<q<1$, one has  
\( 
 \varepsilon^{\,1-1/q}
 \longrightarrow\infty
~\text{as }\varepsilon\downarrow0,
\)
which is a contradiction.  Hence, \eqref{eq:hypothetical-q-less-one} cannot hold
uniformly over all semifinite von Neumann algebras when
$0<q<1$ and $1/p=1+\alpha/r$.
\end{remark}



The DOI formulation also gives the usual perturbation and
quasicommutator consequences.

\begin{corollary}\label{cor:semifinite-quasicommutator}
Under the assumptions of part~\textup{(i)} of
Theorem~\ref{thm:semifinite-doi}, if
$Z\in\mathcal M$ and
$AZ-ZB\in L^q(\mathcal M,\tau)\cap L^2(\mathcal M,\tau)$, then
\[
 \|f(A)Z-Zf(B)\|_p
 \leq C_{\alpha,p,q}L_\alpha(f)
 \bigl(\|A\|_r^r+\|B\|_r^r\bigr)^{\alpha/r}
 \|AZ-ZB\|_q.
\]
In particular, if
$A-B\in L^q(\mathcal M,\tau)\cap L^2(\mathcal M,\tau)$, then
\[
 \|f(A)-f(B)\|_p
 \leq C_{\alpha,p,q}L_\alpha(f)
 \bigl(\|A\|_r^r+\|B\|_r^r\bigr)^{\alpha/r}
 \|A-B\|_q.
\]
\end{corollary}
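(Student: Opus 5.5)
The plan is to reduce the quasicommutator estimate to the double operator integral bound of Theorem~\ref{thm:semifinite-doi}(i) via the standard identity
\[
 f(A)Z-Zf(B)=T_{f^{[1]}}^{A,B}(AZ-ZB),
\]
valid for $Z\in\mathcal M$ and self-adjoint contractions $A,B$. The second assertion is then the special case $Z=1$. The only issue is that $1\in\mathcal M$ is not in $L^2$ in general; this is why $A-B\in L^2$ is assumed, and the identity is applied to $A-B$ directly.

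\textbf{Step 1 (the identity).} I would first verify it for polynomials $f$. For $f(t)=t^n$ one has $f^{[1]}(s,t)=\sum_{i+j=n-1}s^it^j$. Since this kernel has separated variables, the DOI acts on $Y=AZ-ZB\in L^2$ by
\[
 T(Y)=\sum_{i+j=n-1}A^iYB^j .
\]
This telescopes to $A^nZ-ZB^n$. For general $f\in C^1([-1,1])$, I would approximate $f$ by polynomials $f_k$ with $f_k\to f$ and $f_k'\to f'$ uniformly on $[-1,1]$. Then $f_k^{[1]}\to f^{[1]}$ uniformly on $[-1,1]^2$, so the DOIs converge in the operator norm of $\mathcal B(L^2(\mathcal M,\tau))$, because the $L^2$-DOI norm is bounded by the sup norm of the symbol. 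Also $f_k(A)Z-Zf_k(B)\to f(A)Z-Zf(B)$ in $\mathcal M$. Comparing the two limits, which are measurable operators, gives the identity with $T$ understood on $L^2$.

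\textbf{Step 2 (applying the theorem).} Since $Y=AZ-ZB\in L^q\cap L^2$, the bounded extension from Theorem~\ref{thm:semifinite-doi}(i) agrees with the $L^2$-defined DOI on $Y$. The approximation procedure used there, namely finite spectral truncations, is consistent with the $L^2$ action on $L^2\cap L^q$. Hence
\[
 \|f(A)Z-Zf(B)\|_p=\|T_{f^{[1]}}^{A,B}(Y)\|_p\le C_{\alpha,p,q}L_\alpha(f)\bigl(\|A\|_r^r+\|B\|_r^r\bigr)^{\alpha/r}\|Y\|_q .
\]

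\textbf{Main obstacle.} The delicate point is this consistency of the extension with the $L^2$ definition, particularly for $q=\infty$ and for $p<1$, where $L^p$ is only quasi-Banach. I would handle it by applying the estimate to the finite spectral truncations $E_A(\Delta)YE_B(\Delta')$. These converge to $Y$ in $L^2$, and their images converge both in $L^2$ and, by the canonical limit in part~(i), in $L^p$. Both modes of convergence imply convergence in measure, so the limits coincide.
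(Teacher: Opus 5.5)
Your proposal is correct and follows the paper's route: establish $T_{f^{[1]}}^{A,B}(AZ-ZB)=f(A)Z-Zf(B)$, invoke Theorem~\ref{thm:semifinite-doi}\textup{(i)}, and specialize to $Z=1$. The only difference is in how the identity is justified. The paper does it in one line of formal spectral calculus, using $f^{[1]}(s,t)(s-t)=f(s)-f(t)$ inside a double integral taken against $Z\notin L^2$. Your argument instead uses polynomial approximation with $\|f_k^{[1]}-f^{[1]}\|_\infty\le\|f_k'-f'\|_\infty$, plus the convergence-in-measure check that the $L^q\to L^p$ extension agrees with the $L^2$-defined double operator integral on $L^2\cap L^q$; this makes rigorous what the paper leaves implicit.
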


\begin{proof}
Put $D=AZ-ZB$.  Since $D\in L^2$, the spectral calculus gives
\[
 \begin{aligned}
 T_{f^{[1]}}^{A,B}(D)
 &=\iint f^{[1]}(s,t)(s-t)\,dE_A(s)Z\,dE_B(t)\\
 &=\iint\bigl(f(s)-f(t)\bigr)\,dE_A(s)Z\,dE_B(t)\\
 &=f(A)Z-Zf(B).
 \end{aligned}
\]
Theorem~\ref{thm:semifinite-doi}
proves the first estimate.  Taking $Z=1$ delivers the second one.
\end{proof}

\begin{remark}
\label{rem:comparison-hsz}
Let $0<\theta<1$.  It is shown in 
\cite[Theorem~1.2]{HSZ} (see also \cite[Theorem 3.1]{HNSZ})  that, for every $0<u<\infty$ and an
appropriate $d=d(u)$, every function $g$ in their Sobolev class
$S_{d,\theta}$ satisfies
\[
 \|g(A)-g(B)\|_u
 \leq C_{u,\theta}\|g\|_{S_{d,\theta}}
       \bigl\||A-B|^\theta\bigr\|_u
 =C_{u,\theta}\|g\|_{S_{d,\theta}}
       \|A-B\|_{\theta u}^{\theta}.
\]
In particular, $\theta u <u$.  
The result in 
Corollary~\ref{cor:semifinite-quasicommutator}
is different from the above inequality. Indeed, 
 in that corollary, we have 
  $p<q$ and the inequality depends on the $L_r$-norms of $A$ and $B$, which is not the case for the inequality above.

\end{remark}

\endgroup

{\bf Acknowledgement} The authors would like to thank Anna Tomskova for helpful comments. 


\end{document}